\newtheorem{thm}{Theorem}[section]
\newtheorem{cor}[thm]{Corollary}
\newtheorem{lem}[thm]{Lemma}
\newtheorem{prop}[thm]{Proposition}
\theoremstyle{definition}
\newtheorem{defn}[thm]{Definition}
\theoremstyle{remark}
\newtheorem{rem}[thm]{Remark}
\numberwithin{equation}{section}
\newcommand{\set}[1]{\left\{#1\right\}}
\newcommand{\Real}{\mathbb R}
\newcommand{\func}[1]{\ensuremath{\mathrm{#1} \:} }
\newcommand{\Hess}[0]{\func{Hess}}
\newcommand{\Div}[0]{\func{div}}
\newcommand{\pRect}[4]{S_{{#1}, {#2}}^{{#3}, {#4}}}
\newcommand{\re}[0]{\func{Re}}
\newcommand{\im}[0]{\func{Im}}
\title{Conformal Structure of Minimal Surfaces with Finite Topology}
\author{Jacob Bernstein and Christine Breiner}
\address{Dept. of Math, Massachusetts Institute of
Technology, Cambridge, MA  02139, USA}
\email{jbern@math.mit.edu}
\address{Dept. of Math,
Johns Hopkins University, Baltimore, MD 21218, USA}
\email{cbreiner@math.jhu.edu}
\thanks{The first author was supported in part by the NSF grant DMS-0606629}
\subjclass[2000]{53A10; 49Q05}
\begin{document}

\begin{abstract}
In this paper we show that a complete, embedded minimal surface in
$\Real^3$, with finite topology and one end, is conformal to a
once-punctured compact Riemann surface. Moreover, using this
conformal structure and the embeddedness of the surface, we
examine the Weierstrass data and conclude that every such surface
has Weierstrass data asymptotic to that of the helicoid.  More
precisely, if $g$ is the stereographic projection of the Gauss
map, then in a neighborhood of the puncture, $g(p) = \exp(i\alpha
z(p) + F(p))$, where $\alpha \in \Real$, $z=x_3+ix_3^*$ is a
holomorphic coordinate defined in this neighborhood and $F(p)$ is
holomorphic in the neighborhood and extends over the puncture with
a zero there.  As a consequence, the end is asymptotic to a
helicoid.  This completes the understanding of the conformal and
geometric structure of the ends of complete, embedded minimal
surfaces in $\Real^3$ with finite topology.
\end{abstract}
\maketitle
\section{Introduction}
We apply the techniques of \cite{BB1} to study complete, embedded
minimal surfaces in $\Real^3$ with finite topology and one end. We
refer to the space of such surfaces as $\mathcal{E}(1)$. Notice
that we do not {a priori} assume the surfaces are properly
embedded.  This is because Colding and Minicozzi have shown, in Corollary 0.13 of
\cite{CY}, that every complete, embedded minimal surface with
finite topology is, in fact, properly embedded.  This fact will be
used implicitly throughout.  Surfaces in $\mathcal{E}(1)$ that
have genus zero have been completely classified by Meeks and
Rosenberg in \cite{MR} and consist of planes
and helicoids; thus we restrict attention to the subset
$\mathcal{E}(1,+)\subset \mathcal{E}(1)$ of surfaces that have
positive genus.  This space is non-trivial; the embedded genus one
helicoid, $\mathcal{H}$, constructed in \cite{WHW} by Hoffman,
Weber, and Wolf provides an example which, moreover, has the
property of being asymptotically helicoidal (see also
\cite{WHWPNAS} for a nice overview of their construction).

\begin{figure}
 \includegraphics[width=3in]{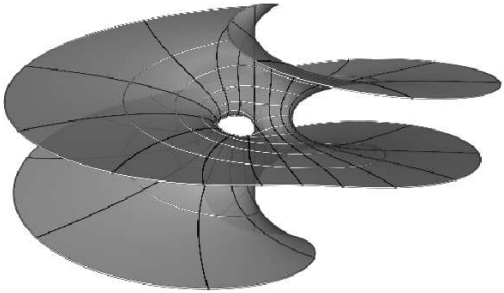}
\caption{A genus-one helicoid (Courtesy of Matthias
Weber)}\label{g1helicoid}
\end{figure}

The construction and study of $\mathcal{H}$ has a rich history.
Using the Weierstrass representation, Hoffman, Karcher, and Wei in
\cite{HKW} first constructed an immersed genus-one helicoid. See
Figure \ref{g1helicoid} for an image of this surface. Computer
graphics suggested it was embedded, but a rigorous construction of
an embedded genus-one helicoid followed only after Hoffman and Wei
proposed a new construction in \cite{HWe}. They considered the limit
of a family of screw-motion invariant minimal surfaces with periodic
handles and a helicoidal end. Weber, Hoffman, and Wolf confirmed the
existence of such a family of surfaces in \cite{WHW2} and ultimately
proved their embeddedness in \cite{WHW}, giving $\mathcal{H}$.
Hoffman, Weber, and Wolf conjecture that $\mathcal{H}$ is not only
the same surface as the one produced in \cite{HKW}, but is actually
the unique element in the class of ``symmetric'' genus-one
helicoids, that is, surfaces in $\mathcal{E}(1)$ with genus one and
containing two coordinate axes. Recently, Hoffman and White, in
\cite{HW}, used variational methods to give a different construction
of a symmetric, embedded genus-one helicoid; whether their
construction is $\mathcal{H}$ is unknown.

Building on \cite{HW}, in \cite{HW2}, Hoffman and White prove rigidity results for
properly immersed minimal surfaces with genus one and one end that, moreover, admit the
same symmetries as $\mathcal{H}$. In particular,
they show such surfaces are conformal to a punctured
torus and are asymptotic to a helicoid. In this paper, we prove that
any $\Sigma\in \mathcal{E}(1,+)$ is conformal to a once punctured,
compact Riemann surface and its Weierstrass data -- see \eqref{WeierstrassRep} -- has
helicoid-like behavior at the puncture:

\begin{thm} \label{ThirdMainThm} Any $\Sigma \in \mathcal{E}(1)$ is conformally a
punctured, compact Riemann surface.  Moreover, if the surface is not flat, then, after a rotation of $\Real^3$, the height
differential, $dh$, extends meromorphically over the puncture with
a double pole, as does the meromorphic one form $\frac{dg}{g}$.
\end{thm}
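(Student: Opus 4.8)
\emph{Proof sketch.}
The plan is to localize at the end, manufacture the natural ``helicoid coordinate'' there out of the height function $x_3$ and its harmonic conjugate, and then feed the Colding--Minicozzi structure theory (as sharpened in \cite{BB1}) into this coordinate to read off the conformal type of the end and the orders of the poles of $dh$ and $\frac{dg}{g}$. First I would dispose of the degenerate cases: if $\Sigma$ is flat it is a plane, conformally $\Real^2 = S^2\setminus\{\infty\}$, with nothing further to prove; if $\Sigma$ has genus zero then by \cite{MR} it is a plane or a helicoid and the claim is explicit. So assume $\Sigma$ is non-flat, hence --- being one-ended, of finite topology, and embedded --- has a genuinely helicoidal end. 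By finite topology $\Sigma$ is diffeomorphic to $\overline{\Sigma}\setminus\{p_\infty\}$ with $\overline{\Sigma}$ closed, and the remaining task is to show the end is conformally a punctured disk. Invoking the structure theory \cite{CY, MR, BB1}, after a rotation of $\Real^3$ putting the axis of the end along the $x_3$-axis one gets a compact region $K\subset\Sigma$ whose boundary is a single smooth loop $\gamma$, such that on the annulus $E:=\Sigma\setminus K$ the function $x_3$ is proper and has no critical points --- equivalently $g$ omits $0$ and $\infty$ on $E$ --- the surface is asymptotic to a helicoid end, and $\log|g|$ grows at most linearly, say $\abs{\log|g|}\le C(\abs{z}+1)$ near the end (the helicoid has $\log|g|=-\alpha x_3^*$).

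Since $x_3$ is harmonic on $\Sigma$, $*dx_3$ is a closed $1$-form, and its period around $\gamma$ equals $\int_K d(*dx_3)=\int_K(\Delta x_3)\,dA=0$ because $\gamma=\partial K$. Hence $*dx_3$ is exact on $E$ with a single-valued primitive $x_3^*$, so $z:=x_3+ix_3^*:E\to\mathbb C$ is holomorphic with $dz=dx_3+i\,{*dx_3}$ nowhere vanishing (as $x_3$ has no critical points on $E$); in particular $z$ is a local biholomorphism. On an honest helicoid $z$ is a global holomorphic coordinate and $g=\exp(i\alpha z)$ with $\alpha\in\Real\setminus\{0\}$. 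The central step is to upgrade the \emph{geometric} statement ``$E$ is asymptotic to a helicoid end'' to the \emph{conformal} statement that $z$ maps a sub-end $E'\subset E$ properly and injectively onto a neighborhood of infinity $\set{\abs{w}>T}$. I expect this to be the main obstacle: it is exactly where the estimates of \cite{CY} in the form used in \cite{BB1} enter, and where the non-simply-connected topology (the loop $\gamma$ carrying the genus, which must be absorbed into $K$ without leaving periods behind) demands more than the genus-zero argument of \cite{MR}.

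Granting that step, $\set{\abs{w}>T}\cong\set{0<\abs{v}<1/T}$ via $v=1/w$, so the end of $\Sigma$ is conformally a punctured disk; gluing $E'$ to the compact piece $K$ exhibits $\Sigma$ as $\overline{\Sigma}\setminus\{p_\infty\}$ for a compact Riemann surface $\overline{\Sigma}$, which is the first assertion. In the coordinate $v=1/z$ centred at $p_\infty$, the function $z$ has a simple pole; since $dh$ coincides with $dz$ for the standard normalization of the Weierstrass data \eqref{WeierstrassRep}, $dh=-v^{-2}\,dv$ extends meromorphically over $p_\infty$ with a double pole.

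Finally I would treat $\frac{dg}{g}$. On $E'\cong\set{0<\abs{v}<1/T}$ the map $g$ is holomorphic and nonvanishing, so $\frac{dg}{g}$ is a holomorphic $1$-form there; write its Laurent expansion $\frac{dg}{g}=\sum_k a_k v^k\,dv$ about $v=0$. A nonzero term with $k\le-3$ would force $\log g$, and hence $\log|g|=\re\log g$, to grow at least like $\abs{v}^{-2}$ along suitable rays as $v\to0$, contradicting the linear bound $\abs{\log|g|}\le C(\abs{z}+1)=O(\abs{v}^{-1})$ on the end; hence $\frac{dg}{g}$ has a pole of order at most $2$ at $p_\infty$. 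If that order were at most $1$, then $\log|g|$ would be $O(\log(1/\abs{v}))$, whereas the helicoid asymptotics give $\log|g|=-\alpha\, x_3^*+o(\abs{x_3^*})$ with $\alpha\ne0$ and $x_3^*=\im z$ unbounded at rate $\abs{v}^{-1}$; so the pole has order exactly $2$. (The same comparison in fact pins the leading coefficient to $-i\alpha$, and, with the embeddedness input, kills the residue, yielding the sharper form $g=\exp(i\alpha z+F)$ promised in the abstract.) This would complete the proof.
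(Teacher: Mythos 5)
Your skeleton matches the paper's at the top level: kill the period of ${}^*dx_3$ by Stokes' theorem (possible because there is only one end), take $z=x_3+ix_3^*$ as a holomorphic coordinate on the end, show $z$ is proper and injective onto a neighborhood of infinity, and then read off the double poles of $dh=dz$ and $\frac{dg}{g}=df$. But the two places where you defer to ``structure theory'' are exactly where the content of the proof lies, and as written they are genuine gaps. The step you yourself flag as the main obstacle --- upgrading the Colding--Minicozzi picture to properness and injectivity of $z$ --- cannot simply be cited from \cite{CY} or \cite{BB1}: for positive genus the paper has to rebuild the entire decomposition (Theorem \ref{FirstMainThm} and Proposition \ref{SecondMainThm}: strictly spiraling $\epsilon$-sheets away from the genus, a lower bound for $|\nabla_\Sigma x_3|$ on the axis region, Rado-type control of the level sets of $x_3$), then prove that the winding number of $g$ on the end-annulus vanishes (Proposition \ref{exf}) so that $f=\log g$ exists at all, then show $f$ maps the regions $\set{\pm f_1\geq 2\gamma_0}$ properly onto half-planes (Proposition \ref{hpoly}), and only then conclude properness of $z$ by Schwarz reflection plus Liouville. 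None of this comes for free from the genus-zero case; the sensitivity of the one-sided curvature estimate to topology is precisely the difficulty, and your proposal contains no substitute for it (in particular nothing plays the role of Proposition \ref{exf}, without which even the existence of a single-valued $\log g$ on the end is unclear for the paper's argument).

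Your treatment of $\frac{dg}{g}$ is, moreover, circular. The inputs you use --- $|\log|g||\leq C(|z|+1)$ on the end and $\log|g|=-\alpha x_3^*+o(|x_3^*|)$ with $\alpha\neq 0$ --- are not furnished by the lamination theory: in the paper's logical order the helicoidal asymptotics (Corollaries \ref{ThirdMainThmCor} and \ref{AsympHel}) are deduced \emph{from} Theorem \ref{ThirdMainThm} via \cite{HPR}, not available beforehand. What the structure theory gives directly is geometric (two multi-valued graphs of small gradient spiraling together), and converting that into a growth bound for $\log|g|$ measured against the conformal coordinate $z$ is essentially equivalent to the mapping statement for $f$ that the paper proves (Proposition \ref{hpoly}, following Propositions 5.1 and 5.2 of \cite{BB1}). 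There the simple pole of $f$ --- hence the exact order two of $\frac{dg}{g}$ --- comes from the fact that each level set $f_1^{-1}(\gamma)$, $|\gamma|\geq 2\gamma_0$, is a single proper curve crossing every sheet of the spiral, and the simple pole of $z$ comes from Proposition \ref{SecondMainThm}. Your Laurent-series argument is fine once such growth control is in hand, but as it stands both the upper bound (pole order at most two) and especially the lower bound ($\alpha\neq 0$, order exactly two) rest on asymptotics that are conclusions of the theorem rather than hypotheses you are entitled to assume.
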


The proof of this result draws heavily on the fundamental work of
Colding and Minicozzi on the geometric structure of embedded
minimal surfaces in $\Real^3$ \cite{CM1,CM2,CM3,CM4,CM5}. Assuming
only mild conditions on the boundaries, they give a description of
the geometric structure of essentially all embedded minimal
surfaces with finite genus. From this structure, they deduce a
number of important consequences. These include: the one-sided
curvature estimate for embedded minimal disks \cite{CM4} -- an
effective version of the strong half-space theorem of Hoffman and
Meeks \cite{HoffmanMeeksHalfSpace}; a compactness result for a
sequence of embedded minimal disks in \cite{CM4} that requires no
a priori bounds on the curvature or area; and the settling of the
Calabi-Yau conjecture for embedded minimal surfaces of finite
topology \cite{CY}, i.e., a complete, embedded minimal
surface in $\Real^3$ of finite topology is properly embedded.

Colding and Minicozzi's work is also an essential ingredient in
understanding minimal surfaces with infinite total curvature,
i.e., complete surfaces with one end. Prior to their work, the
study of these surfaces required very strong assumptions on the
conformal structure and behavior of the Gauss map at the end.  For
examples we refer to Hoffman and McCuan \cite{HoffmanMcCuan} and
Hauswirth, Perez and Romon \cite{HPR}.  The latter authors
consider $E\subset \Real^3$, a complete embedded minimal annulus
with one compact boundary component and one end with infinite
total curvature. They assume, in addition, that $E$ is conformal
to a punctured disk, the Weierstrass data $(g,dh)$ has the
property that $dg/g$ and $dh$ extend across the puncture, and the
flux over the boundary of $E$ has zero vertical component.
Assuming all of this, they then deduce more precise information
about the asymptotic geometry of $E$.  Notice in a suitable
neighborhood of the puncture of a non-flat element of
$\mathcal{E}(1)$, Theorem \ref{ThirdMainThm} immediately implies
that these conditions are satisfied.

By using Colding and Minicozzi's work, in particular the
compactness result of \cite{CM4}, Meeks and Rosenberg were able to
remove such strong assumptions for disks.  Indeed, in \cite{MR},
they show that the helicoid is the unique non-flat complete,
embedded minimal disk.  They go on to discuss how the techniques
of their proof might allow one to show something similar to
Theorem \ref{ThirdMainThm} for surfaces in $\mathcal{E}(1,+)$ and
the implications this has for the possible conformal structure of
complete embedded minimal surfaces in $\Real^3$. They do this
without going into the details or addressing the difficulties, but
indicate how such a statement may be proved using the ideas and
techniques of their proof.  In \cite{BB1}, we more directly use the
geometric structure given by Colding and Minicozzi for embedded
minimal disks to prove the uniqueness of the helicoid.  In this
paper, we generalize our argument to surfaces in
$\mathcal{E}(1,+)$, thus determining the asymptotic structure of
all elements of $\mathcal{E}(1)$.

Recall the Weierstrass representation takes a triple $(M,g, dh)$
where $M$ is a Riemann surface, $g$ is a meromorphic function and
$dh$ is a meromorphic one form (which has a zero everywhere $g$
has a pole or zero) that satisfy certain natural compatibility
conditions, and gives a minimal immersion of $M$ into $\Real^3$:
\begin{equation}\label{WeierstrassRep}
\mathbf{F}:=\re \int\left(\frac{1}{2}
(g^{-1}-g),\frac{i}{2}(g^{-1}+g), 1\right) dh.
\end{equation}
Moreover, the immersion $\mathbf{F}$ is such that $\re dh= \mathbf{F}^*dx_3$ and $g$ is the stereographic projection of the Gauss map of the image of $\mathbf{F}$. Any immersed minimal surface in $\Real^3$ admits such a
representation. For the helicoid with $z \in \mathbb{C}$ one has:
\begin{equation}g:=e^{i\alpha z}; \;\;\; dh:=dz;\;\;\; \alpha \in \Real^+.
\end{equation} Notice that on the helicoid both $\frac{dg}{g}$ and
$dh$ have double poles at infinity; moreover,
$\frac{dg}{g}-i\alpha dh$ is identically zero. For $\Sigma \in
\mathcal{E}(1)$, Theorem \ref{ThirdMainThm}, the Weierstrass
representation, and embeddedness immediately imply that near the
puncture the Weierstrass data is asymptotic to that of a helicoid.
This is an immediate consequence of Theorem \ref{ThirdMainThm}
above and Theorem 2 of \cite{HPR}, though we present our own
proof in Section \ref{pfsec} (where we also prove Theorem
\ref{ThirdMainThm}).  Indeed, we have:
\begin{cor}\label{ThirdMainThmCor}  For $\Sigma$ as in Theorem \ref{ThirdMainThm},
there exists an $\alpha\in\Real$ so $\frac{dg}{g}-i\alpha dh$
holomorphically extends over the puncture.  Equivalently, after possibly translating parallel to the
$x_3$-axis, in an appropriately chosen neighborhood of the puncture,
$\Gamma\subset \Sigma$, $g(p)=\exp( {i\alpha z(p)}+F(p))$ where $F:\Gamma\to
\mathbb{C}$ extends holomorphically over the puncture with a zero
there and $z=x_3+ix_3^*$ is a holomorphic coordinate on $\Gamma$.
Here $x_3^*$ is the harmonic conjugate of $x_3$.
\end{cor}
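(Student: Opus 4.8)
The plan is to combine Theorem~\ref{ThirdMainThm} with the residue theorem on the compact surface $\overline{\Sigma}$ and with the asymptotically helicoidal description of the end supplied by the preceding sections. Write $p_\infty\in\overline{\Sigma}$ for the puncture, and assume (as we may, after Theorem~\ref{ThirdMainThm}) that $\Sigma$ is not flat and that $\Real^3$ has been rotated so that $dh$ and $\frac{dg}{g}$ are meromorphic on $\overline{\Sigma}$ with double poles at $p_\infty$. The first point is that $dh$ has no residue at $p_\infty$: it is holomorphic on $\Sigma=\overline{\Sigma}\setminus\{p_\infty\}$ (it is $\partial x_3$ for the minimal immersion), so $p_\infty$ is its only pole and the residue theorem forces $\mathrm{Res}_{p_\infty}dh=0$. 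Hence $z:=\int dh$ is single valued on a punctured‐disk neighborhood $\Gamma$ of $p_\infty$, and since $dh$ has a double pole there, $z$ has a simple pole at $p_\infty$; thus $1/z$ is a holomorphic coordinate centered at $p_\infty$, $z$ is a holomorphic coordinate on $\Gamma$, and $z=x_3+ix_3^*$ with $x_3^*=\im z$ the (now single valued) harmonic conjugate of $x_3$ on $\Gamma$, while $dh=dz$ on $\Gamma$.

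Because $\frac{dg}{g}$ also has a double pole at $p_\infty$, in the coordinate $z$ we may write $\frac{dg}{g}=\psi(z)\,dz$ with $\psi$ holomorphic at $z=\infty$ and $\psi(\infty)\neq0$; set $i\alpha:=\psi(\infty)$, so $\alpha\in\mathbb{C}\setminus\{0\}$. Then $\frac{dg}{g}-i\alpha\,dh=(\psi-i\alpha)\,dz$ has at worst a simple pole at $p_\infty$, and, since $dh$ is residue free, its residue there equals $\mathrm{Res}_{p_\infty}\frac{dg}{g}$. To finish the first assertion we must show (i) $\alpha\in\Real$ and (ii) $\mathrm{Res}_{p_\infty}\frac{dg}{g}=0$. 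For (ii): shrinking $\Gamma$ so that $\frac{dg}{g}$ is holomorphic on it, $g$ has no zeros or poles on $\Gamma$, so $\mathrm{Res}_{p_\infty}\frac{dg}{g}$ is the winding number of $g$ along a loop encircling $p_\infty$, an integer; the asymptotically helicoidal behavior of the end — $g$ agrees to leading order with the model $e^{i\alpha z}$ — identifies it with the winding number of $e^{i\alpha z}$ along such a loop, which is $0$ because $z$ is single valued there. (Equivalently, by the residue theorem this integer equals $\#\{g=\infty\}-\#\{g=0\}$ on $\Sigma$, which vanishes.) For (i): the Cauchy–Riemann equations give $\re\psi=\partial_{x_3}\log|g|$ on $\Gamma$, so $\re(i\alpha)=\lim_{x_3^*\to+\infty}\partial_{x_3}\log|g|$; the helicoidal description of the end forces $\log|g|$ to be asymptotically independent of $x_3$ (on the helicoid $\log|g|=-\alpha\,x_3^*$ exactly), so this limit is $0$ and $\alpha\in\Real$. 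Granting (i) and (ii), $\frac{dg}{g}-i\alpha\,dh$ extends holomorphically over $p_\infty$.

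For the equivalent formulation, set $F:=\int\bigl(\frac{dg}{g}-i\alpha\,dh\bigr)$. This one form has vanishing residue everywhere on $\Gamma\cup\{p_\infty\}$ and $\Gamma$ is an annulus, so $F$ is single valued and holomorphic on $\Gamma$, and it extends holomorphically over $p_\infty$ by the first assertion. Since $d(\log g-i\alpha z)=\frac{dg}{g}-i\alpha\,dh=dF$, we get $\log g=i\alpha z+F+c$ for a constant $c$; a translation of $\Sigma$ parallel to the $x_3$-axis by a real amount $t$ sends $z\mapsto z+t$ and fixes $g$, hence replaces $F+c$ by $F+c-i\alpha t$. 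Choosing $t$ with $i\alpha t$ equal to the limiting value of $F+c$ at $p_\infty$ — which is purely imaginary because $\re(F+c)=\log|g|+\alpha x_3^*\to0$ at the end — yields $g=\exp(i\alpha z+F)$ with the new $F$ holomorphic on $\Gamma$, extending over $p_\infty$, and vanishing there.

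The main obstacle is step (i), together with its refinement $\re(F+c)\to0$ used in the final normalization: establishing that near the puncture $|g|$ behaves, up to controlled error, like $e^{-\alpha x_3^*}$ with $\alpha$ real. This is not a formal consequence of the meromorphic extension in Theorem~\ref{ThirdMainThm}; it requires the quantitative description of the end as a pair of multivalued graphs modeled on the helicoid, resting on the Colding–Minicozzi structure theory and the embeddedness of $\Sigma$ developed in the preceding sections — and is precisely what one extracts, under these now–verified hypotheses, from Theorem~2 of \cite{HPR}.
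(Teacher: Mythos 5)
Your reduction in the first two paragraphs is sound and essentially parallels the paper's setup: $dh$ is holomorphic on $\Sigma$ with its only pole at the puncture, so it is residue-free, $z$ has a simple pole, and everything reduces to your steps (i) and (ii). Two remarks on (ii): as written it is circular, since identifying the winding number of $g$ with that of $e^{i\alpha z}$ presupposes the asymptotic form being proven; but it is harmless, because the equality of the number of zeros and poles of $g$ is exactly Proposition \ref{exf} (equivalently its corollary, that $g=e^{f}$ on $\Gamma$), which is proved earlier in the paper and which you may simply cite -- indeed the proof of Theorem \ref{ThirdMainThm} already works with the single-valued $f=\log g$ with a simple pole, so in the paper's formulation the residue question never even arises.

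The genuine gap is step (i), and you flag it yourself. Your intermediate claim that ``the helicoidal description of the end forces $\log|g|$ to be asymptotically independent of $x_3$'' is not a stepping stone but a restatement of the conclusion: since $\partial_{x_3}\log|g|=\re\psi\to\re\psi(\infty)$, asymptotic $x_3$-independence of $\log|g|$ \emph{is} the assertion $\re(i\alpha)=0$. Nothing in the preceding sections supplies it: the decomposition gives strict spiraling and $|\nabla_\Sigma x_3|\geq\epsilon_0$ on $\mathcal{R}_A$, which (via $|\log|g||\leq\gamma_0$ there) only confines the image of the axis region to a strip about a line $\re(\psi(\infty)z)=\mathrm{const}$, consistent with any nonzero complex $\psi(\infty)$; and the $C^0$-asymptotically-helicoidal statement (Corollary \ref{AsympHel}) sits downstream of this corollary in the paper, so appealing to it here is circular. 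The fact is also genuinely not formal: immersed examples such as $g=e^{cz}$, $dh=dz$ on $\mathbb{C}$ with $c\notin i\Real$ have exactly the conformal and pole structure of Theorem \ref{ThirdMainThm}, so embeddedness must enter somewhere, and your proposal never uses it. Your only non-circular exit is to invoke Theorem 2 of \cite{HPR} outright, which the paper acknowledges yields the corollary immediately -- but then the residue scaffolding is redundant and the proposal is a citation, not a proof. The paper instead closes precisely this gap with a short geometric argument: by Proposition \ref{SecondMainThm}, $\set{x_3=0}\cap\Gamma$ consists of two proper curves $\sigma_\pm$ parametrized by $x_3^*$; since $g$ is the stereographic projection of the Gauss map, $\arg g(\sigma_\pm(t))$ determines $\arg \sigma_\pm'(t)$ up to $\pm\pi/2+o(1)$, and if the leading coefficient had the wrong real part the two curves would spiral in opposite senses through arbitrarily large radii and be forced to intersect, contradicting embeddedness. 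Finally, your normalization step, which again leans on the unproved asymptotics to argue $\re(F+c)\to 0$, is unnecessary: since $\alpha\neq 0$, any additive constant in $\log g$ can be absorbed by translating $\Sigma$ along the $x_3$-axis \emph{and} re-basing $x_3^*$ (which is only defined up to an additive constant), as is done in the paper.
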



Theorem 1 of \cite{HPR} implies that for Weierstrass data
$(\Gamma,g,dh)$ as in Corollary \ref{ThirdMainThmCor}, that also
satisfy a certain flux condition, the Weierstrass representation
gives a minimal surface that is $C^0$-asymptotic to a (vertical)
helicoid $H$. That is, for any $\epsilon>0$, there exists
$R_\epsilon>0$, so that $\Gamma\backslash B_{R_\epsilon}(0)$ has
Hausdorff distance to $H\backslash B_{R_\epsilon}(0)$ less than
$\epsilon$.  For elements of $\mathcal{E}(1)$, as the Weierstrass
data is defined on a surface with only one end, this flux condition
is automatically satisfied by Stokes' theorem.  Thus, Theorem
\ref{ThirdMainThm} allows one to immediately apply Theorem 1 of
\cite{HPR} and obtain:
\begin{cor} \label{AsympHel}
If $\Sigma\in \mathcal{E}(1)$ is non-flat then $\Sigma$ is
$C^0$-asymptotic to some helicoid.
\end{cor}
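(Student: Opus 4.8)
The plan is to deduce Corollary \ref{AsympHel} from Theorem~1 of \cite{HPR} by checking that its hypotheses are met; because elements of $\mathcal{E}(1)$ have a single end, the only point requiring care is the flux condition, and that will follow from Stokes' theorem. If $\Sigma$ has genus zero then \cite{MR} already identifies it as a plane or a helicoid, and a helicoid is trivially $C^0$-asymptotic to itself; in any case the argument below is uniform in the genus, since Theorem \ref{ThirdMainThm} and Corollary \ref{ThirdMainThmCor} apply to all of $\mathcal{E}(1)$.

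First I would invoke Theorem \ref{ThirdMainThm} to rotate $\Real^3$ so that $dh$ and $\frac{dg}{g}$ extend meromorphically over the puncture with double poles, and then pass to a neighborhood $\Gamma\subset\Sigma$ of the puncture as in Corollary \ref{ThirdMainThmCor}, on which $g=\exp(i\alpha z+F)$ and $dh=dz$, with $z=x_3+ix_3^*$ and $F$ holomorphic and extending over the puncture with a zero there. Here $\alpha\neq 0$: since $\frac{dg}{g}-i\alpha\,dh$ extends holomorphically while $dh$ has a double pole, were $\alpha=0$ the form $\frac{dg}{g}$ would extend holomorphically, contradicting the conclusion of Theorem \ref{ThirdMainThm} for the non-flat surface $\Sigma$. (Alternatively, $\alpha=0$ would make $g$ bounded and bounded away from $0$ near the puncture, so $\Sigma$ would have finite total curvature, and a complete embedded minimal surface in $\Real^3$ with finite total curvature and one end is a plane.) Reflecting $\Sigma$ if necessary, we may take $\alpha>0$, so that $(\Gamma,g,dh)$ is of the precise form of the Weierstrass data considered in \cite{HPR}.

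Next I would verify the flux hypothesis. The $\Real^3$-valued $1$-form $\ast\, d\mathbf{F}$ — the conjugate differential of the coordinate functions — is closed on $\Sigma$ because $\mathbf{F}$ is a conformal harmonic immersion, so $\gamma\mapsto\int_\gamma \ast\, d\mathbf{F}$ descends to a homomorphism on $H_1(\Sigma;\mathbb{Z})$. Since $\Sigma$ has exactly one end, $\Sigma\setminus\Gamma$ is a \emph{compact} surface with boundary $\partial\Gamma$, and Stokes' theorem yields $\int_{\partial\Gamma}\ast\, d\mathbf{F}=\int_{\Sigma\setminus\Gamma}d(\ast\, d\mathbf{F})=0$. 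In particular the flux of the end $\Gamma$ over $\partial\Gamma$ has vanishing vertical component, which is exactly the hypothesis needed to apply Theorem~1 of \cite{HPR}. (As a consistency check, this also forces the double poles of $dh$ and $\frac{dg}{g}$ at the puncture to be residue-free, matching the normal forms above.)

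Finally, Theorem~1 of \cite{HPR} applied to $(\Gamma,g,dh)$ produces a vertical helicoid $H$ to which $\mathbf{F}(\Gamma)$ is $C^0$-asymptotic; since $\Sigma$ differs from $\Gamma$ by a compact set, $\Sigma$ is $C^0$-asymptotic to $H$, i.e.\ for every $\epsilon>0$ there is $R_\epsilon>0$ so that $\Sigma\setminus B_{R_\epsilon}(0)$ has Hausdorff distance less than $\epsilon$ from $H\setminus B_{R_\epsilon}(0)$. Undoing the rotation of $\Real^3$ gives the statement for a (possibly non-vertical) helicoid. The only real obstacle here is bookkeeping: aligning our normalizations with those of \cite{HPR} and confirming, as above, that the vertical-flux obstruction automatically vanishes for one-ended surfaces — there is no substantial new difficulty, since Theorem \ref{ThirdMainThm} supplies precisely the conformal and Weierstrass information that \cite{HPR} take as input.
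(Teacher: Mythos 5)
Your proposal is correct and follows essentially the same route as the paper: both deduce the corollary by applying Theorem~1 of \cite{HPR} to the Weierstrass data furnished by Theorem \ref{ThirdMainThm} and Corollary \ref{ThirdMainThmCor}, with the flux hypothesis verified automatically via Stokes' theorem because $\Sigma$ has only one end. Your extra checks (that $\alpha\neq 0$ and the treatment of the genus-zero case) are harmless refinements of the same argument.
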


Theorem \ref{ThirdMainThm} and its corollaries
complete the classification of the conformal type and asymptotic geometry of complete embedded minimal surfaces in $\Real^3$
with finite topology. Indeed, let  $\mathcal{E}(k)$ be the space of complete, embedded minimal
surfaces with finite topology and $k$ ends.  Meeks and Rosenberg, in Corollary 1.1 of \cite{MR2}, completely classify the conformal type of these surfaces when $k\geq 2$.  Indeed, they show such a surface is conformal to a compact
Riemann surface with $k$ punctures.  However, they can only describe the asymptotic geometry at $k-2$ of the ends. Collin, in Theorem 1 of \cite{Co}, overcomes this obstacle by proving that all such surfaces have finite total curvature. This, together with classic results of Huber \cite{Huber} and Osserman \cite{OSS}, recovers not only the conformal type of surfaces in $\mathcal{E}(k)$ for $k\geq 2$, but also gives a description of their asymptotic geometry. Thus, he completes the classification for embedded minimal surfaces of finite topology and two or more ends. Combined with Theorem \ref{ThirdMainThm}, we then have the following classification result for any minimal surface of finite topology that is complete and embedded in $\Real^3$:
\begin{cor}\label{classifcor}
Let $\Sigma \in \mathcal{E}(k), k \geq 1$. Then $\Sigma$ is
conformal to a punctured compact Riemann surface.  Moreover, if $k
\geq 2$, then $\Sigma$ has finite total curvature and each end of
$\Sigma$ is asymptotic to either a plane or a catenoid.  If $k = 1$,
then either $\Sigma$ is a plane or it has infinite total curvature
and its end is asymptotic to a helicoid.
\end{cor}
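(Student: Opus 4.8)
The plan is to assemble this classification by citing the appropriate pieces already in the literature together with Theorem \ref{ThirdMainThm}, and then checking that the cases fit together consistently. First I would dispose of the conformal-type statement: for $k \geq 2$, Corollary 1.1 of \cite{MR2} gives that $\Sigma$ is conformal to a compact Riemann surface with $k$ punctures; for $k = 1$, this is precisely the first assertion of Theorem \ref{ThirdMainThm}. Hence in all cases $\Sigma$ is conformally a punctured compact Riemann surface, which is the first claim.

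Next I would handle the case $k \geq 2$. The key input is Collin's theorem (Theorem 1 of \cite{Co}), which says that a properly embedded minimal surface in $\Real^3$ with more than one end and finite topology has finite total curvature. (Recall that by Corollary 0.13 of \cite{CY}, completeness plus finite topology already gives properness, so Collin's hypotheses apply.) Once finite total curvature is known, the classical results of Huber \cite{Huber} and Osserman \cite{OSS} apply: $\Sigma$ is conformally a compact Riemann surface with finitely many punctures, the Gauss map extends meromorphically across each puncture, and each end is asymptotically planar or catenoidal — these are exactly the possible geometries of an embedded end of finite total curvature (an embedded end of finite total curvature is, up to higher-order terms, a graph of a function $a\log r + b + O(r^{-1})$, so it is asymptotic to a plane when $a = 0$ and to a half-catenoid when $a \neq 0$). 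This settles the $k \geq 2$ case.

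Finally, for $k = 1$: if $\Sigma$ is flat it is a plane, so suppose $\Sigma$ is not flat. By Theorem \ref{ThirdMainThm} it is conformally a once-punctured compact Riemann surface and $dh$ has a double pole at the puncture, hence $\int_\Sigma |K|\, dA = \infty$ (a finite-total-curvature end would force $dh$ to have at worst a simple pole, equivalently the end would be planar or catenoidal, contradicting that the surface has a single non-planar end). Then Corollary \ref{AsympHel} — itself a consequence of Theorem \ref{ThirdMainThm} via Theorem 1 of \cite{HPR}, using that the vertical-flux condition is automatic by Stokes' theorem for a one-ended surface — gives that $\Sigma$ is $C^0$-asymptotic to a helicoid. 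This completes all cases.

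The proof is essentially bookkeeping; the only point requiring a little care is ensuring the hypotheses of the cited theorems are genuinely met — in particular that properness is available (via \cite{CY}) so that Collin's theorem and the Huber–Osserman machinery apply, and that the excluded flat case is correctly separated out in both the $k=1$ and $k \geq 2$ statements (a plane has $k = 1$, finite — indeed zero — total curvature, and no non-planar end, so it is consistent with the stated dichotomy). There is no substantial new obstacle; the work has already been done in Theorem \ref{ThirdMainThm} and in the cited papers.
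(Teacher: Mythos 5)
Your proposal is correct and follows essentially the same route as the paper: the paper obtains the corollary exactly by combining Corollary 1.1 of \cite{MR2} and Collin's theorem with Huber and Osserman for $k\geq 2$, and Theorem \ref{ThirdMainThm} together with Corollary \ref{AsympHel} for $k=1$. Your extra remark that the double pole of $dh$ (equivalently of $\frac{dg}{g}$, which prevents $g$ from extending meromorphically) rules out finite total curvature in the non-flat $k=1$ case is a correct filling-in of a step the paper leaves implicit.
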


Let us now recall the argument of \cite{BB1}, where we provide an
alternative proof to the uniqueness of the helicoid. There it is shown that any
complete, non-flat, properly embedded minimal disk can be
decomposed into two regions: one a region of strict spiraling,
i.e. the union of two strictly spiraling multi-valued graphs over
the $x_3=0$ plane (after a rotation of $\Real^3$), and the other a
neighborhood of the region where the graphs are joined and where
the normal has small vertical component. By strictly spiraling, we
mean that each sheet of the graph meets any  (appropriately
centered) cylinder with axis parallel to the $x_3$-axis in a curve
along which $x_3$ strictly increases (or decreases). This
follows from existence results for multi-valued minimal graphs in
embedded disks found in \cite{CM2} and an approximation result for
such minimal graphs from \cite{EXC}. The strict spiraling is then
used to see that $\nabla_\Sigma x_3\neq 0$ everywhere on the
surface; thus, the Gauss map is not vertical and the holomorphic
map $z=x_3+ix_3^*$ is a holomorphic coordinate. By looking at the
$\log$ of the stereographic projection of the Gauss map, the
strict spiraling is used to show that $z$ is actually a proper map
and thus, conformally, the surface is the plane. Finally, this
gives strong rigidity for the Weierstrass data, implying the
surface is a helicoid.

For $\Sigma \in \mathcal{E}(1,+)$, as there is finite genus and only one end, the
topology of $\Sigma$ lies in a ball in $\Real^3$, and so,
by the maximum principle, all components of the intersection of
$\Sigma$ with a ball disjoint from the genus are disks.  Hence,
outside of a large ball, one may use the local results of
\cite{CM1,CM2,CM3,CM4} about embedded minimal disks. In \cite{BB1},
the trivial topology of $\Sigma$ allows one to deduce global
geometric structure immediately from these local results. For
$\Sigma \in \mathcal{E}(1,+)$, the presence of non-zero genus
complicates matters.  Nevertheless, the global structure will follow
from the far reaching description of embedded minimal surfaces given
by Colding and Minicozzi in \cite{CM5}.
In particular, as $\Sigma$ has one end, globally it looks like a
helicoid (see Appendix \ref{GlobGeomApp}). Following \cite{BB1}, we
first prove a sharper description of the global structure (in
Section \ref{DecSec}); indeed, one may generalize the decomposition
of \cite{BB1} to $\Sigma \in \mathcal{E}(1,+)$ as:
\begin{thm}
\label{FirstMainThm} There exist $\epsilon_0>0$ and a decomposition (see Figure \ref{GenusDecomp})
of $\Sigma$ into disjoint subsets $\mathcal{R}_A$, $\mathcal{R}_S$,
and $\mathcal{R}_G$ such that:
\begin{enumerate}
\item
$\mathcal{R}_G$ is compact, connected, has connected boundary and
$\Sigma\backslash \mathcal{R}_G$ has genus 0;
\item after a rotation
of $\Real^3$, $\mathcal{R}_S$ can be written as the union of two
(oppositely oriented) strictly spiraling multi-valued graphs $\Sigma^{1}$ and $\Sigma^{2}$;
\item in $\mathcal{R}_A$, $|\nabla_\Sigma
x_3|\geq \epsilon_0$.
\end{enumerate}
\end{thm}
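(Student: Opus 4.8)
The plan is to build the decomposition by combining the global "helicoid-like" picture of $\Sigma$ (which follows from Colding--Minicozzi's description of embedded minimal surfaces with one end, as recalled in Appendix \ref{GlobGeomApp}) with the disk-theory arguments of \cite{BB1}, localized to the complement of a large ball containing the genus. Since $\Sigma$ has finite genus and one end, there is a radius $R_0$ so that $\Sigma \cap B_{R_0}(0)$ carries all the topology; by the maximum principle every component of $\Sigma \setminus B_{R_0}(0)$ is a disk, and by the one-ended global structure there is exactly one such component, call it $\mathcal{E}$, which is an annular "half-open" neighborhood of the end. I would first fix $\mathcal{R}_G$: enlarge $R_0$ slightly to a radius $R_1$ so that $\Sigma \cap \partial B_{R_1}$ is a single smooth curve (using the properness from \cite{CY} and Sard), set $\mathcal{R}_G := \Sigma \cap \bar B_{R_1}(0)$, and observe it is compact, connected (connect it up if necessary by further enlarging $R_1$ past the finitely many components), has connected boundary, and that $\Sigma \setminus \mathcal{R}_G = \mathcal{E}$ has genus $0$. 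This establishes (1).

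**Next** I would analyze the disk $\mathcal{E}$. Here the global structure theorem tells us that, after a rotation of $\Real^3$, $\mathcal{E}$ is built from multi-valued graphs over the $x_3=0$ plane spiraling around the $x_3$-axis, in the spirit of \cite{CM2}. Following \cite{BB1} closely: the existence results for multi-valued minimal graphs in embedded disks (\cite{CM2}) together with the approximation result of \cite{EXC} produce, on the portion of $\mathcal{E}$ outside a possibly larger ball, two multi-valued graphs $\Sigma^1, \Sigma^2$ (oppositely oriented, coming from the "two sheets" of the helicoid-like end) on which the spiraling is \emph{strict}, meaning each sheet meets every suitably centered vertical cylinder in a curve along which $x_3$ is strictly monotone. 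I would take $\mathcal{R}_S := \Sigma^1 \cup \Sigma^2$. The strict spiraling is exactly where one gets that $\nabla_\Sigma x_3 \neq 0$ on $\mathcal{R}_S$. This gives (2).

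**For (3)**, the region $\mathcal{R}_A := \mathcal{E} \setminus \mathcal{R}_S$ is the "gluing" region between the two spiraling graphs together with a collar near $\mathcal{R}_G$. The claim is that on this region $|\nabla_\Sigma x_3| \geq \epsilon_0$ for some uniform $\epsilon_0 > 0$; equivalently, the unit normal has vertical component bounded away from $\pm 1$. This follows from the one-sided curvature estimate and the structure of the transition region in \cite{CM4}, precisely as in \cite{BB1}: points where the normal is nearly vertical would force, via the one-sided curvature estimate, a large embedded minimal graph there, contradicting that such points lie in the thin neck between the two graphs rather than on the graphs themselves. Quantitatively one argues by contradiction and compactness (using properness and the curvature bounds on the transition region) to extract the uniform $\epsilon_0$. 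One then checks $\mathcal{R}_A, \mathcal{R}_S, \mathcal{R}_G$ are disjoint and exhaust $\Sigma$ by construction.

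**The main obstacle** is not any single estimate — each ingredient is on the shelf from Colding--Minicozzi and \cite{BB1} — but rather the bookkeeping needed to pass from the local disk results to honest \emph{global} statements about $\mathcal{E}$ in the presence of the genus: one must be sure that the multi-valued graphs extracted on different pieces of $\mathcal{E}$ patch together into just two globally defined sheets $\Sigma^1, \Sigma^2$ with consistent orientation, that the "axis" of the spiraling is a single line (not wandering), and that the compact genus region $\mathcal{R}_G$ can be chosen with connected boundary compatibly with this axis. This is precisely the point at which the argument departs from the trivial-topology case of \cite{BB1}, and it is handled by invoking the sharp global description of \cite{CM5} (as quoted in Appendix \ref{GlobGeomApp}) to guarantee the single-helicoid-like model outside a large ball before running the \cite{BB1} machinery.
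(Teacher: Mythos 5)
Your outline of (1) and (2) follows the paper's route in spirit (genus in a ball, disks away from it, blow-up pairs producing $\epsilon$-sheets that strictly spiral via Proposition \ref{uThetaLowBndPrp}), although you compress a real amount of machinery: in the paper $\mathcal{R}_S$ is not ``the end outside a larger ball'' but the complement of a chain of balls $B_{R_1 s_i}(y_i)$ around a sequence of blow-up pairs running up and down the axis (Proposition \ref{AbvBelBUPprp} and Lemma \ref{genusStructure}, which in turn need Lemma \ref{BUPinConeLem}, Corollary \ref{ConditiononBUPsTHM}, Lemma \ref{thm211} and the chord-arc bounds of \cite{CY}); the axis region extends to infinity, so no single ball separates it from the spiraling region, and this chain structure is exactly what makes part (3) provable.

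The genuine gap is in your argument for (3). First, the claim that ``points where the normal is nearly vertical would force, via the one-sided curvature estimate, a large embedded minimal graph there'' is not a valid use of that estimate: Theorem \ref{oscthm} requires the surface to lie close to and on one side of a plane (or a $2$-valued graph), and a single point with nearly vertical normal in the transition region gives no such configuration; nothing about the ``thin neck'' forbids it a priori. In the paper, the non-vanishing of $\nabla_\Sigma x_3$ on $\Gamma$ is Proposition \ref{SecondMainThm}, proved by a Rado-type argument: $x_3$ is harmonic, away from the genus wide short cylinders meet $\Sigma$ in disks (Proposition \ref{GlobTopProp}), and a critical point would force the level set through it to have at least four ends, contradicting the two-endedness forced by the strict spiraling. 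Second, ``contradiction and compactness'' does not by itself produce a uniform $\epsilon_0$: $\mathcal{R}_A$ is noncompact and the scales $s_i$ vary, so the bound must be scale-invariant. The paper gets it by applying the Harnack inequality to $v=-2\log|\nabla_\Sigma x_3|\ge 0$, which satisfies $\Delta_\Sigma v=|A|^2$, on each ball $B_{\beta R_1 s_i}(y_i)$ where Proposition \ref{genuscbprop} bounds $|A|^2$ by $K s_i^{-2}$ and where $|\nabla_\Sigma x_3|=1$ at some point (the normal turns horizontal near each blow-up pair); scale invariance then gives the same $\epsilon_1$ along the whole chain, and the bound is propagated to the bounded complementary components by subharmonicity of $v$ and the maximum principle, using the bound $\epsilon_2$ on the compact set $\partial\mathcal{R}_G$. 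Your definition $\mathcal{R}_A=\mathcal{E}\setminus\mathcal{R}_S$, without the chain of blow-up pairs and the chord-arc bounds, leaves you with neither the curvature bounds nor the normalization points needed to run any such quantitative argument, so as written the uniform lower bound in (3) is not established.
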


\begin{figure}
 \includegraphics[width=3in]{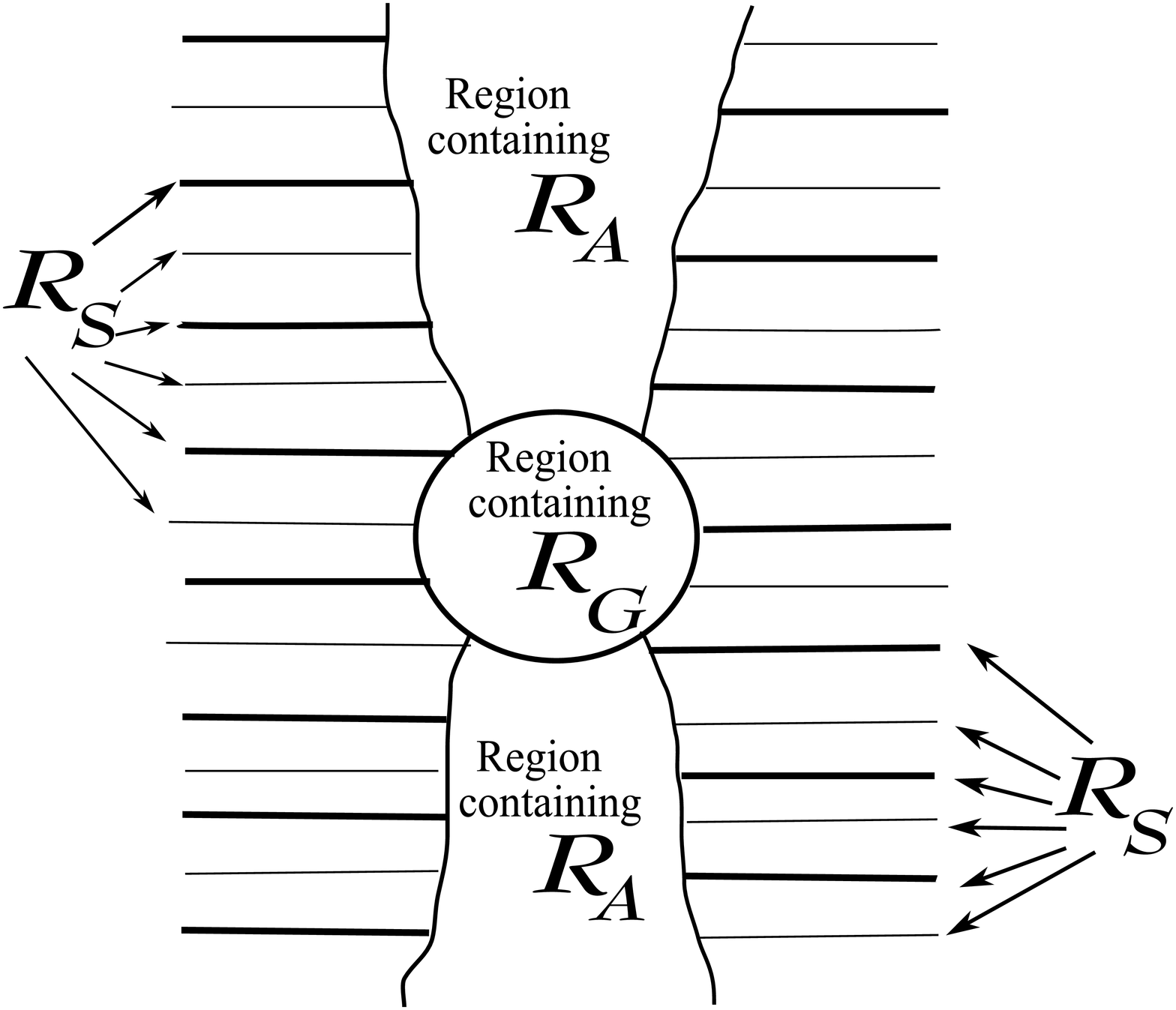}
\caption{A cross-sectional sketch of the three regions in the
decomposition of $\Sigma$ as outlined in Theorem
\ref{FirstMainThm}.}\label{GenusDecomp}
\end{figure}

\begin{rem} \label{MultiGraphRem}
We say $\Sigma^{i}$ ($i=1,2$) is a multi-valued graph if it can be
decomposed into $N$-valued $\epsilon$-sheets (see Definition
\ref{epsilonsht}) with varying center.  That is,
$\Sigma^i=\cup_{j=-\infty}^\infty \Sigma_j^i$ where each
$\Sigma_j^i=y^i_j+\Gamma_{u^i_j}$ is an $N$-valued $\epsilon$-sheet.
Strict spiraling is then equivalent to $(u_j^i)_\theta\neq 0$ for
all $j$. A priori, the axes of the multi-valued graphs vary, a fact
that introduces additional book-keeping.  For the sake of clarity,
we assume that each $\Sigma^i$ is an $\infty$-valued
$\epsilon$-sheet -- i.e $\Sigma^i$ is the graph, $\Gamma_{u^i}$, of
a single function $u^i$ with $u^i_\theta\neq 0$.
\end{rem}
To prove this decomposition, we first find the region of strict
spiraling, $\mathcal{R}_S$.  The strict spiraling controls the
asymptotic behavior of level sets of $x_3$ which, as $x_3$ is
harmonic on $\Sigma$,  gives information about $x_3$ in all of
$\Sigma$. More precisely:
\begin{prop} \label{SecondMainThm}
There exists $\Gamma\subset \Sigma$, an annulus, so that
$\Sigma\backslash \Gamma$ is compact and such that: In $\Gamma$,
$\nabla_\Sigma x_3\neq 0$ and, for all $c\in \Real$, $\Gamma\cap
\set{x_3=c}$ consists of either one smooth, properly embedded curve
or two smooth, properly embedded curves each with one endpoint on
$\partial\Gamma$ along with a finite number of smooth curves with both endpoints on $\partial \Gamma$.  Moreover, if $c$ is a regular value of $x_3$ then $\Gamma\cap \set{x_3=c}$ is a subset of the unbounded component of $\Sigma \cap \set{x_3=c}$.
\end{prop}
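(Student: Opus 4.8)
The plan is to start from the decomposition of the end of $\Sigma$ into two strictly spiraling multi-valued graphs $\Sigma^1$, $\Sigma^2$, which we may take (as in Theorem \ref{FirstMainThm} and Remark \ref{MultiGraphRem}) to be $\infty$-valued $\epsilon$-sheets $\Gamma_{u^1}$, $\Gamma_{u^2}$ over an annular region in the $x_3=0$ plane, with axis the $x_3$-axis after rotation. Let $\Gamma$ be the annular end of $\Sigma$ consisting of $\Sigma^1 \cup \Sigma^2$ together with a collar joining them, chosen so that $\Sigma \setminus \Gamma$ is compact and contains the genus $\mathcal{R}_G$. The first assertion, $\nabla_\Sigma x_3 \neq 0$ on $\Gamma$, follows because on each sheet strict spiraling means $(u^i)_\theta \neq 0$, which forces the tangent plane to be non-horizontal, hence $x_3$ has no critical points there; on the joining collar one arranges (using that this is the region $\mathcal{R}_A$ where $|\nabla_\Sigma x_3| \geq \epsilon_0$, or by shrinking $\Gamma$) that $\nabla_\Sigma x_3 \neq 0$ as well. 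This is essentially the argument of \cite{BB1} carried over sheet-by-sheet.

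The heart of the matter is the description of the level sets $\Gamma \cap \{x_3 = c\}$. Since $x_3$ is harmonic on $\Sigma$ and $\Gamma$ is (conformally) a punctured disk, and $\nabla_\Sigma x_3 \neq 0$ on $\Gamma$, each level set is a $1$-manifold properly embedded in $\Gamma$; its components are either arcs with both endpoints on $\partial\Gamma$, or arcs with one endpoint on $\partial\Gamma$ running out to the puncture, or (a priori) closed curves. First I would rule out closed components and arcs with both endpoints at infinity: a closed level curve of the harmonic function $x_3$ in the punctured disk $\Gamma$ would, by the maximum principle, either bound a disk in $\Gamma$ (on which $x_3 \equiv c$, impossible since $\nabla_\Sigma x_3 \neq 0$) or enclose the puncture, and in the latter case $x_3$ would be bounded near the puncture — but strict spiraling forces $x_3 \to \pm\infty$ along the sheets. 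Next, the strict spiraling on each sheet $\Sigma^i$ shows that $\{x_3 = c\} \cap \Sigma^i$ is a single properly embedded curve spiraling out to the puncture (this is exactly where $(u^i)_\theta \neq 0$ is used: along the cylinder $\{r = \text{const}\}$, $x_3$ is strictly monotone in $\theta$, so $\{x_3 = c\}$ meets each such cylinder exactly once). Thus $\{x_3 = c\} \cap (\Sigma^1 \cup \Sigma^2)$ consists of exactly two unbounded arcs, one in each sheet, each with one endpoint on $\partial\Gamma$; adding the compact collar can only contribute finitely many arcs with both endpoints on $\partial\Gamma$ (finiteness because $x_3$ is proper and has no critical points on the compact collar, or by a standard transversality/compactness argument), and it may connect the two unbounded arcs into one, or leave them as two — giving precisely the stated dichotomy.

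For the final sentence, suppose $c$ is a regular value of $x_3$ on all of $\Sigma$. The unbounded component of $\Sigma \cap \{x_3 = c\}$ is the one escaping to the puncture; since we have just shown that the part of $\{x_3=c\}$ inside $\Gamma$ that reaches the puncture lies in the sheets $\Sigma^1 \cup \Sigma^2 \subset \Gamma$ and is connected to $\partial\Gamma$ by arcs that continue into $\Sigma\setminus\Gamma$, the whole of $\Gamma \cap \{x_3=c\}$ lies in a single connected component of $\Sigma\cap\{x_3=c\}$, and that component is unbounded. One subtlety is that a priori $\Sigma \cap \{x_3 = c\}$ could have other unbounded components, but $\Sigma$ has only one end, so outside a large ball $\Sigma$ is contained in $\Gamma$, forcing every unbounded component of $\Sigma\cap\{x_3=c\}$ to enter $\Gamma$ and hence to coincide with the one we have identified.

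The main obstacle I expect is the bookkeeping in the second step — rigorously passing from the "strict spiraling $\Rightarrow$ one level curve per sheet" statement on the idealized $\infty$-valued sheet to the actual surface, controlling what the compact joining region does to the level sets, and ruling out the exotic possibilities (closed level curves, infinitely many arcs) using only harmonicity of $x_3$, properness, and the one-end hypothesis rather than any a priori conformal information. Establishing that $\Gamma$ can be chosen as a genuine annulus with $\Sigma\setminus\Gamma$ compact — i.e. that the strictly spiraling region together with a collar really is a neighborhood of the puncture — is where the global structure from Colding–Minicozzi (via Theorem \ref{FirstMainThm}) does the essential work.
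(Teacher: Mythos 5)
There is a genuine gap, and it sits exactly at the point you flag only in passing: the non-vanishing of $\nabla_\Sigma x_3$ on the ``collar'' where the two multi-valued graphs are glued. That region (the axis region, essentially $\tilde{\mathcal{R}}_A$) is unbounded -- it runs up and down the $x_3$-axis forever -- so it cannot be avoided ``by shrinking $\Gamma$'': any $\Gamma$ with $\Sigma\backslash\Gamma$ compact contains arbitrarily far-out pieces of it, and every level set $\set{x_3=c}$ with $|c|$ large passes through it. Your other suggestion, to quote Theorem \ref{FirstMainThm}(3) ($|\nabla_\Sigma x_3|\geq\epsilon_0$ on $\mathcal{R}_A$), is circular relative to the paper's logic: in the paper that bound is proved \emph{after} Proposition \ref{SecondMainThm}, by applying a Harnack inequality to $v=-2\log|\nabla_\Sigma x_3|$, and the smoothness of $v$ -- i.e.\ precisely the non-vanishing of $\nabla_\Sigma x_3$ away from the genus -- is supplied by Proposition \ref{SecondMainThm} itself. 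So your proposal proves the proposition only on the sheets, where strict spiraling makes it easy, and assumes it on the part of $\Gamma$ where the real work is needed. (A smaller issue of the same kind: you invoke ``$\Gamma$ is conformally a punctured disk,'' which is the content of Theorem \ref{ThirdMainThm} and is proved using this proposition; only the topological annulus structure from Proposition \ref{GlobTopProp} is available here, though that is all you actually need for the level-set bookkeeping.)

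The missing idea in the paper is a Rado-type argument. For $|x_3|>1$ the genus lies below/above, so by the maximum principle and Proposition \ref{GlobTopProp} the intersection of $\Sigma$ with wide, short cylinders about the $x_3$-axis is a union of disks; by the strict spiraling of the sheets, a level set of $x_3$ at such a height has only two ends. Since $x_3$ is harmonic, a critical point inside one of these disks would force the component of the level set through it to meet the boundary of the disk in at least four points (the key fact in the proof of Rado's theorem), which is impossible with only two ends. Hence $\nabla_\Sigma x_3\neq 0$ on all of $\Sigma\cap\set{|x_3|>1}$ -- axis region included -- and for such $c$ the level set is a single smooth properly embedded curve. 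The paper then handles $|c|\leq 2$ by enlarging $\mathcal{R}_G$ so that $\Gamma\cap\set{|x_3|\leq 2}$ lies entirely in the strictly spiraling sheets (possible because the axis region intersected with a slab is bounded), where your sheet-by-sheet argument does apply and yields the two unbounded curves. Without the Rado step, neither the non-vanishing of the gradient on $\Gamma$ nor the ``one curve'' alternative in the statement can be obtained, so you should rework the collar portion of your argument along these lines rather than citing the decomposition theorem.
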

The decomposition allows us to argue as in \cite{BB1}, though the
non-trivial topology again adds some technical difficulties.  By
Stokes' Theorem, $x_3^*$ (the harmonic conjugate of $x_3$) exists on
$\Gamma$ and thus there is a well defined holomorphic map
$z:\Gamma\to \mathbb{C}$ given by $z=x_3+ix_3^*$. Proposition
\ref{SecondMainThm} implies that $z$ is a holomorphic coordinate on
$\Gamma$.  We claim that $z$ is actually a proper map and so
$\Gamma$ is conformally a punctured disk. Following \cite{BB1}, this
can be shown by studying the Gauss map. On $\Gamma$, the
stereographic projection of the Gauss map, $g$, is a holomorphic map
that avoids the origin. Moreover, the minimality of $\Sigma$ and the
strict spiraling in $\mathcal{R}_S$ imply that the winding number of
$g$ around the inner boundary of $\Gamma$ is zero. Hence, by
monodromy there exists a holomorphic map $f:\Gamma\to \mathbb{C}$
with $g=e^f$. Then, as in \cite{BB1}, the strict spiraling in
$\mathcal{R}_S$ imposes strong control on $f$ which is sufficient to
show that $z$ is proper. Further, once we establish $\Gamma$ is
conformally a punctured disk, the properties of the level sets of
$f$ imply that it extends meromorphically over the puncture with a
simple pole. This gives Theorem \ref{ThirdMainThm}.

\section{Structural Properties of $\Sigma$}
In the next five subsections, we develop the tools needed to prove
the decomposition of Theorem \ref{FirstMainThm} and Proposition
\ref{SecondMainThm}.  Many of these are extensions of those
developed for the simply connected case, which can be found in
Section 2 of \cite{BB1}.
\subsection{Preliminaries}
We first introduce some notation.  Unless otherwise specified, throughout the paper let
$\Sigma\in \mathcal{E}(1,+)$, i.e. $\Sigma$ is a complete,
embedded minimal surface with finite and positive genus, $k$, and
one end. Here we say that a surface has genus $k$ if it is
homeomorphic to a compact, oriented genus $k$ surface with at most
a finite number of punctures.  As $\Sigma$ has one end and is
complete in $\Real^3$, there exists an $R>0$ so that one of the
components $\overline{\Sigma}$ of $\Sigma\cap B_R$ is a compact
surface with connected boundary and genus $k$; we refer to
$\overline{\Sigma}$ as the \emph{genus of $\Sigma$}. Thus, $\Sigma
\backslash \overline{\Sigma}$ has genus $0$ and is a neighborhood
of the end of $\Sigma$.  By homothetically rescaling, we may assume that the
genus, $\overline{\Sigma}$, lies in $B_1$ and
$\sup_{\overline{\Sigma}} |A|^2 \geq 1$. Here $|A|^2$ denotes the
norm squared of the second fundamental form of $\Sigma$ and
$B_r(y)$ represents the Euclidean ball of radius $r$, centered at
$y$, whereas $\mathcal{B}_r(y)$ denotes the intrinsic ball in
$\Sigma$ of radius $r$ centered at $y$.
Throughout the paper, when we say \textit{far from (or near) the
genus}, we mean extrinsically with respect to this scale.  That
is, a subset of $\Real^3$ is far from the genus if the Euclidean
distance to $B_1$ is large.

Denote by $\Pi:\Real^3\to \Real^2$ the projection
$\Pi(x_1,x_2,x_3)=(x_1,x_2)$.  Let
\begin{equation}
\mathbf{C}_\delta (y)=\set{x: (x_3-y_3)^2 \leq \delta^2
\big((x_1-y_1)^2 +(x_2-y_2)^2\big)}\subset \Real^3
\end{equation}
be the complement of a cone and set
$\mathbf{C}_\delta=\mathbf{C}_\delta (0)$. Given a real-valued
function, $u$, defined on a domain $\Omega\subset \Real^+\times
\Real$, define the map $\Phi_u :\Omega\to \Real^3$ by $\Phi_u
(\rho,\theta)=(\rho \cos \theta,\rho \sin \theta, u(\rho,\theta))$
so the image is a multi-valued graph. A natural domain is the polar
rectangle:
\begin{equation}
\pRect{r_1}{r_2}{\theta_1}{\theta_2}=\set{(\rho,\theta) \mid r_1\leq
\rho\leq r_2, \theta_1\leq \theta \leq \theta_2}.
\end{equation}
Indeed, for $u$ defined on $\pRect{r_1}{r_2}{\theta_1}{\theta_2}$,
$\Phi_u(\pRect{r_1}{r_2}{\theta_1}{\theta_2})$ is a multi-valued
graph over the annulus $D_{r_2}\backslash D_{r_1}$. Thus, $\Gamma_u
:= \Phi_u (\Omega)$ is the graph of $u$, and $\Gamma_u$ is embedded
if and only if $w\neq 0$, where the separation $w$ of $u$ is defined
as $w(\rho,\theta)=u(\rho,\theta+2\pi)-u(\rho,\theta)$. We say a
multi-valued graph, $\Gamma_u$, \textit{strictly spirals} if, for
$u:\Omega \to \Real^3$, $u_\theta \neq 0$.  Note at times we fail to
distinguish between $u$ and its graph $\Gamma_u$ -- the meaning will
be clear from context.

Recall that $u$ satisfies the
minimal surface equation if:
\begin{equation} \label{MinSurfEq}
\Div \left(\frac{\nabla u}{\sqrt{1+|\nabla
u|^2}}\right)=0.
\end{equation}
The graphs of interest to us will satisfy
the following flatness condition:
\begin{equation}
\label{MainInEq21}
|\nabla u|+\rho |\Hess_u|+4 \rho \frac{|\nabla w|}{|w|}+\rho^2 \frac{|\Hess_w|}{|w|}\leq \epsilon <\frac{1}{2\pi}.
\end{equation}
As multi-valued minimal graphs are fundamental to the description of
the asymptotic behavior, we introduce some notation for them.
\begin{defn}\label{weakepsilonsht}  A multi-valued minimal graph $\Sigma_0$ is a \emph{weak} \emph{$N$-valued} (\emph{$\epsilon$}-)\emph{sheet} (\emph{centered at $y$ on the scale $s$}), if $\Sigma_0=\Gamma_{u}+y$ and $u$, defined on
$\pRect{s}{\infty}{-\pi N}{ \pi N}$, satisfies  \eqref{MinSurfEq},
has $|\nabla u|\leq \epsilon$, and
$\Sigma_0\subset \mathbf{C}_{\epsilon}(y)$.
\end{defn}
We will often need more control on the sheets as well as a normalization at $\infty$:
\begin{defn} \label{epsilonsht} A multi-valued minimal graph $\Sigma_0$ is an (\emph{strong}) \emph{$N$-valued} (\emph{$\epsilon$}-)\emph{sheet} (\emph{centered at $y$ on the scale $s$}), if $\Sigma_0=\Gamma_{u}+y$ is a weak $N$-valued $\epsilon$-sheet centered at $y$ on scale $s$, and in addition $u$ satisfies
\eqref{MainInEq21} and $\lim_{\rho\to \infty} \nabla u(\rho,0)=0$.
\end{defn}
Using Simons' inequality, Corollary 2.3 of \cite{MMGPD} shows that
on the one-valued middle sheet of a 2-valued graph satisfying
$\eqref{MainInEq21}$, the hessian of $u$ has faster than linear
decay. This implies a Bers like
result on asymptotic tangent planes -- i.e. the normalization at $\infty$ in the definition of $\epsilon$-sheet is well defined. Indeed, for $\Gamma_u$ a 2-valued $\epsilon$-sheet, one has gradient decay,
\begin{equation}
  \label{GradDecay}
  |\nabla u|(\rho,0) \leq  C\epsilon \rho^{-5/12}.
\end{equation}
Note that Colding and Minicozzi show, using standard elliptic theory, that for sufficiently large $N$ and small $\delta$ a weak $N$-valued
$\delta$-sheet contains a $4$-valued $\epsilon$ sheet as a sub-graph.
For the details, we refer the reader to Proposition 2.3 of \cite{BB1}.

Finally, as in the papers of Colding and Minicozzi, we are
interested in points with large curvature relative to nearby points,
as around these points multi-valued graphs form (see \cite{CM2}).
The precise definition we use is the following:
\begin{defn} The pair $(y,s)$, $y\in \Sigma$, $s>0$, is a (\emph{$C$}) \emph{blow-up pair} on scale $s$ if
\begin{equation}
\sup_{B_s(y)\cap \Sigma} |A|^2\leq 4|A|^2(y)=4C^2 s^{-2}.
\end{equation}
\end{defn}
\begin{rem}
The constant $C$ will be specified in some of the theorems but it
should always be thought of as being very large.
\end{rem}

\subsection{Existence of Multi-valued Graphs}\label{StrSec}
To obtain the decomposition of Theorem \ref{FirstMainThm} we will
need two propositions regarding the large scale geometric structure
of elements of $\mathcal{E}(1,+)$.  These generalize results for
disks from \cite{CM1} and \cite{CM2} on the existence and
extendability of multi-valued graphs in embedded minimal disks. It
should be noted that many of the proofs of \cite{CM1,CM2} did not
require that the surface be a disk but only that the boundary be
connected, a fact used in \cite{CM5} to extend the description of
embedded minimal disks of \cite{CM1,CM2} to finite genus surfaces.
The first result we will need gives the existence of an $N$-valued
graph starting near the genus and extending as a graph all the way
out; i.e. the initial graph is a subset of a graph over an unbounded
annulus. The second result is similar but applies for a blow-up pair
far from the genus. Namely, for such a pair a multi-valued graph
forms on the scale of the pair and extends as a graph all the way
out.  It may be helpful to compare with the analogous results for
disks, i.e. Theorem 0.3 of \cite{CM1} and Theorem 0.4 of \cite{CM2}.


Note that local versions of the propositions stated below are used
by Colding and Minicozzi in \cite{CM5}, specifically in the proof of
their compactness result for finite genus surfaces (i.e. Theorem 0.9
of \cite{CM5}). However, they are not explicitly stated in
\cite{CM5}. Thus, for the sake of completeness, we include a proof
of both propositions, {assuming} Theorem 0.9 of \cite{CM5}, in
Appendix \ref{LamResApp}. Both propositions require a rotation of
$\Real^3$.  However, because both propositions come from the global
geometric structure of $\Sigma$, the rotations are the same.
\begin{prop} \label{GraphNrConePrp}Given
$\epsilon>0$ and $N\in \mathbb{Z}^+$ there exists an $R>1$ so that:
After a rotation of $\Real^3$ there exists $\Sigma_g\subset \Sigma$,  a weak $N$-valued $\epsilon$-sheet centered at $0$ and on scale $R$.
\end{prop}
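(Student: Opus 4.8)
The plan is to deduce this from Colding--Minicozzi's compactness theorem for finite-genus surfaces (Theorem 0.9 of \cite{CM5}), exactly in the spirit of the disk case (Theorem 0.3 of \cite{CM1}). First I would fix the constant from the rescaling normalization: the genus $\overline{\Sigma}$ lies in $B_1$ and $\sup_{\overline{\Sigma}}|A|^2\geq 1$, so there is a point $y_0$ near $B_1$ with large curvature, and one can find a blow-up pair $(y_0,s_0)$ with $s_0$ comparable to $1$ sitting near the genus. The point of such a pair is that, by the Colding--Minicozzi structure theory, a multi-valued graph forms on the scale $s_0$ near $y_0$; the issue is to show that after rotating $\Real^3$ so that this graph is horizontal, a weak $N$-valued $\epsilon$-sheet can be found on some large \emph{fixed} scale $R$, i.e. the sheet is a graph over the \emph{unbounded} annulus $D_\infty\setminus D_R$ and is trapped in the cone complement $\mathbf{C}_\epsilon(0)$.

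The main step is a contradiction/compactness argument. Suppose the statement fails for some $\epsilon>0$ and $N$: then there is a sequence of surfaces $\Sigma_i\in\mathcal{E}(1,+)$ (all normalized so the genus is in $B_1$ with $\sup|A|^2\geq 1$), and scales $R_i\to\infty$, such that no rotation produces a weak $N$-valued $\epsilon$-sheet centered at $0$ on scale $R_i$. Because the genus is confined to $B_1$, outside a large ball each $\Sigma_i$ is a union of embedded minimal disks with connected boundary, so the local results of \cite{CM1,CM2,CM3,CM4} apply there, and globally Theorem 0.9 of \cite{CM5} gives subsequential convergence (away from a bounded set where the curvature can concentrate) to a lamination of $\Real^3\setminus(\text{axis})$ by parallel planes, with the singular set being a Lipschitz curve transverse to the leaves — i.e. the limit looks like a (possibly degenerate) helicoid-type lamination. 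After rotating so that the limit leaves are horizontal, standard elliptic estimates upgrade the lamination convergence to smooth graphical convergence on compact subsets away from the axis; hence for $i$ large one extracts from $\Sigma_i$ a genuine $N$-valued graph $\Gamma_{u_i}$ over an annular region, with $|\nabla u_i|$ as small as we like and with the graph contained in $\mathbf{C}_\epsilon$. This contradicts the assumption. The only subtlety is that the rotations must be \emph{the same} for the two propositions in this subsection — this is automatic because both rotations are dictated by the same limit lamination coming from the global structure of $\Sigma$ (as noted in the text), so one fixes the rotation once and for all from Theorem 0.9 of \cite{CM5}.

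The hard part, and the reason the proof is deferred to Appendix \ref{LamResApp}, is making the passage from the \emph{local, bounded-scale} statements that Colding and Minicozzi actually prove in \cite{CM5} to the \emph{global, fixed-scale} statement asserted here: one must check that the multi-valued graph produced near the genus does not lose its graphical/sheet structure as $\rho\to\infty$, i.e. that it extends as a graph over the entire unbounded annulus rather than spiraling back or colliding with another sheet. This requires the extension results for multi-valued minimal graphs in embedded surfaces with connected boundary (the analogue of Theorem 0.4 of \cite{CM2}, whose proofs as noted only use connectedness of the boundary, not simple connectivity) together with the one-sided curvature estimate of \cite{CM4} to rule out the graph coming back near the axis. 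I would carry this out by: (i) producing the blow-up pair near the genus; (ii) invoking the structure/compactness theorem to get a limit lamination and fix the rotation; (iii) using the extension theorem plus the one-sided curvature estimate to propagate the sheet all the way out; (iv) absorbing constants to get the cone containment $\Sigma_g\subset\mathbf{C}_\epsilon(0)$ and the gradient bound $|\nabla u|\leq\epsilon$ on scale $R$. Steps (i), (ii), (iv) are routine given the cited results; step (iii) is where essentially all the work lies, and it is exactly the finite-genus generalization of the corresponding argument in \cite{BB1}.
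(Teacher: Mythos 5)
There is a genuine gap, and you have located it yourself: your ``step (iii)'' -- passing from a multi-valued graph produced on a bounded scale to a \emph{weak} sheet, i.e.\ a graph defined over the entire unbounded annulus $\pRect{R}{\infty}{-\pi N}{\pi N}$ and trapped in $\mathbf{C}_{\epsilon}(0)$ -- is exactly the content of the proposition, and the route you sketch for it does not go through. The extension theorems of \cite{CM2} (Theorems 0.3/0.4 and their connected-boundary variants) and the one-sided curvature estimate both require disk topology (or at least that the relevant component meet balls/convex sets in disks) on the scale at which they are applied; near the genus, on the scale of your blow-up pair $(y_0,s_0)$, the component of $\Sigma$ in a comparably sized ball contains $\overline{\Sigma}$ and is not a disk, and the one-sided curvature estimate is exactly the statement that is sensitive to this (rescaled catenoids). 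This is why the paper reserves the blow-up-pair-plus-extension machinery for Proposition \ref{GraphExtPrp}, where $|y|\geq R$ keeps everything away from the genus, and proves Proposition \ref{GraphNrConePrp} by a different mechanism. Your contradiction setup is also misdirected on two counts: the statement concerns the fixed surface $\Sigma$ (so $R$ may depend on $\Sigma$, and there is no sequence of surfaces to extract from), and even granting the setup, lamination convergence only yields an $N$-valued graph over a \emph{bounded} annulus at the scale of the rescaling, which does not contradict the failure of the statement on scale $R_i$; so the contradiction never closes without the missing step (iii).

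The paper's actual argument avoids all of this. It applies the compactness theorem (the specialization of Theorem 0.9 of \cite{CM5} stated in the appendix) to homothetic rescalings $\lambda_i\Sigma$ of the single surface $\Sigma$, which fixes the rotation and identifies the singular set $\mathcal{S}$ as one vertical line. Lemma \ref{GraphBndCor} is then proved by contradiction: if components through points $p_i\to\infty$ in $\mathbf{C}_{\delta}$ failed to be graphs with small gradient, rescaling $\Sigma$ by $1/|p_i|$ and using Proposition III.0.2 of \cite{CM5} (the limit points lie at unit distance from the origin in $\mathbf{C}_{\delta}$, hence away from $\mathcal{S}$) would force them to converge to planar graphs. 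This gives, for the fixed $\Sigma$, that \emph{every} component of $(\mathbf{C}_{\delta}\setminus B_R)\cap\Sigma$ is a graph with gradient at most $\tilde{\epsilon}$ -- graphical structure all the way out to infinity in one stroke, with no extension theorem and no direct appeal to the one-sided curvature estimate. Proposition \ref{GraphNrConePrp} then follows from the elementary separation bound $|w(\rho,\theta)|\leq\int_0^{2\pi}|u_\theta|\leq 2\pi\tilde{\epsilon}\rho$ with $\tilde{\epsilon}\leq\epsilon/(4\pi N)$, which keeps $N$ consecutive sheets inside $\mathbf{C}_{\epsilon}$. If you want to salvage your outline, replace your steps (i)--(iii) with an analogue of Lemma \ref{GraphBndCor}: rescale the fixed surface about points going to infinity in the cone complement, rather than trying to propagate a graph outward from a blow-up pair at the genus.
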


\begin{prop} \label{GraphExtPrp} Given $\epsilon>0$ sufficiently small and $N\in \mathbb{Z}^+$ there exist $C_1>0$
and $R>1$ so: After a rotation of $\Real^3$, if $(y,s)$ is a $C_1$
blow-up pair in $\Sigma$ and $|y|\geq R$ then there exists
$\Sigma_g\subset \Sigma$, a weak $N$-valued $\epsilon$-sheet
centered at $y$ and on scale $s$.
\end{prop}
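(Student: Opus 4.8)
The plan is to deduce Proposition~\ref{GraphExtPrp} from the global geometric structure of $\Sigma$ (Appendix~\ref{GlobGeomApp}) together with Colding--Minicozzi's compactness theorem for finite genus surfaces, Theorem 0.9 of \cite{CM5}, following the same strategy used for disks in Theorem 0.4 of \cite{CM2} but being careful about the genus. First I would recall that $\Sigma$ has its genus concentrated in $B_1$, so any ball $B_s(y)$ with $|y|$ large and $s$ comparable to $|y|$ (or smaller) meets $\Sigma$ in disks; hence in a neighborhood of a blow-up pair far from the genus we are in the setting of embedded minimal \emph{disks} and may invoke the local disk results of \cite{CM1,CM2} directly. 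The point $(y,s)$ being a $C_1$ blow-up pair means $|A|^2$ is large at $y$ relative to nearby points, precisely the hypothesis under which \cite{CM2} produces a multi-valued graph on scale $s$ near $y$.

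The key steps, in order, are: (1) fix $\epsilon$ and $N$, and choose auxiliary constants $N'\gg N$, $\delta\ll\epsilon$ as in Proposition 2.3 of \cite{BB1} so that a weak $N'$-valued $\delta$-sheet contains a strong $N$-valued $\epsilon$-sheet; it therefore suffices to produce a weak $N'$-valued $\delta$-sheet; (2) argue by contradiction and compactness: suppose no such $R$ and $C_1$ work, so there are elements $\Sigma_j\in\mathcal{E}(1,+)$ and $C_j$ blow-up pairs $(y_j,s_j)$ with $C_j\to\infty$, $|y_j|\to\infty$, for which the conclusion fails; (3) rescale so that $s_j=1$ and translate $y_j$ to the origin, apply Theorem 0.9 of \cite{CM5} (using that $B_{s_j}(y_j)$ is far from the genus, so the relevant pieces are disks and the one-sided curvature estimate of \cite{CM4} applies) to extract a subsequential limit lamination; (4) the blow-up pair condition forces the limit to contain, through the origin, a multi-valued graph which by the lamination structure and \cite{CM2} extends as a graph over an unbounded annulus $D_\infty\setminus D_{s}$, contained in a cone $\mathbf{C}_\delta(y_j)$ with gradient bound $\delta$ — i.e. exactly a weak $N'$-valued $\delta$-sheet on scale $s_j$ centered at $y_j$ — contradicting the failure assumption; (5) finally, note the rotation of $\Real^3$ needed here is dictated by the global structure of $\Sigma$ (the axis of the helicoid-like end), hence is the same rotation as in Proposition~\ref{GraphNrConePrp}, as claimed in the remark preceding the statement.

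The main obstacle I expect is step~(3)--(4): extracting the multi-valued graph from the compactness theorem and, crucially, showing it \emph{extends all the way out} as a graph over an unbounded annulus rather than merely locally. For disks this is the content of Theorem 0.4 of \cite{CM2}, whose proof uses that sheets cannot spiral into themselves (embeddedness) and the one-sided curvature estimate to propagate the graphical region outward; here one must check that the presence of genus in $B_1$ does not obstruct this propagation, which it does not precisely because $B_1$ is a fixed bounded set while the blow-up pair and its graph live far away, so all the curvature estimates and the no-mixing arguments of \cite{CM4,CM2} apply verbatim on the disk pieces. A secondary technical point is verifying the conical containment $\Sigma_g\subset\mathbf{C}_\epsilon(y)$ and the gradient decay, but these follow from \eqref{GradDecay} and the standard elliptic estimates once the graph is known to extend, exactly as in Proposition 2.3 of \cite{BB1}. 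I would carry out the details of both propositions together in Appendix~\ref{LamResApp}, since Proposition~\ref{GraphNrConePrp} is the analogous statement with the blow-up pair replaced by a point near the genus and is proved by the same compactness argument applied at the scale $R$ of the genus.
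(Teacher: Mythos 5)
Your proposal correctly identifies the ingredients (disk results of \cite{CM1,CM2} near a blow-up pair far from the genus, the global structure coming from Theorem 0.9 of \cite{CM5}, and the fact that the rotation is the one dictated by the global limit, shared with Proposition \ref{GraphNrConePrp}), but the two steps that carry the actual content of the proposition are asserted rather than proved, and the way you propose to carry them out would not work. First, the local result (Theorem 0.6 of \cite{CM2}, applied because the component of $B_{|y|/2}(y)\cap\Sigma$ containing $y$ is a disk) produces an $N$-valued graph over an annulus $D_{\Omega|y|}\backslash D_{s/2}$ in an \emph{a priori arbitrary} plane $P$, and only out to radius $\Omega|y|$: the extension machinery of \cite{CM2,CM4} that you invoke ``verbatim'' requires the component of $\Sigma\cap B_R(y)$ containing $y$ to be a disk with boundary on $\partial B_R(y)$, and this fails as soon as $B_R(y)$ swallows the genus, i.e. beyond radius comparable to $|y|$. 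So the disk theory alone cannot give a graph over an unbounded annulus, nor can it tell you that $P$ is close to the one plane $\set{x_3=0}$ fixed by the global rotation. The paper closes both gaps with Lemma \ref{GraphBndCor} (a consequence of the lamination theorem applied to homothetic rescalings of the \emph{fixed} $\Sigma$ about the origin): every component of $\Sigma\cap(\mathbf{C}_\delta\backslash B_{R_0})$ is a graph over $\set{x_3=0}$ with small gradient. Since the locally produced graph reaches scale $\Omega|y|$, it must enter $\mathbf{C}_\delta\backslash B_{R_0}$ (for $\delta>1/(4\Omega)$ and $|y|\geq R>2R_0$), which simultaneously forces $P$ to be nearly horizontal and allows the graph to be continued outward over all of $D_\infty\backslash D_s(\Pi(y))$. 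Your step (4) simply posits this conclusion.

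Second, your contradiction/compactness setup in steps (2)--(3) is malformed for this statement. The proposition is about one fixed $\Sigma$ (the rotation and the constants are allowed to depend on $\Sigma$), so its negation does not produce a sequence of surfaces $\Sigma_j\in\mathcal{E}(1,+)$, and a sequence with $C_j\to\infty$ is not what is wanted anyway -- $C_1$ is a constant you get to choose once (it comes from the blow-up/local graph theorem of \cite{CM2}), and enlarging it strengthens the hypothesis on the pair rather than weakening the claim. Moreover, rescaling so that $s_j=1$ and translating $y_j$ to the origin sends the genus off to infinity (since $s_j\ll|y_j|$ by Corollary \ref{ConditiononBUPsTHM}), so Theorem 0.9 of \cite{CM5} in the form used in Appendix \ref{LamResApp} -- homothetic rescalings about the origin of the fixed $\Sigma$, yielding the foliation and the single rotation -- is not available in that normalization; and even granting some subsequential limit lamination, deducing that the surface at finite $j$ contains a graph over an \emph{unbounded} annulus is exactly the point at issue. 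A minor remark: the statement only asks for a \emph{weak} $N$-valued $\epsilon$-sheet, so your reduction via Proposition 2.3 of \cite{BB1} to strong sheets is unnecessary here (that step is used later, e.g. in Theorem \ref{InitShtExst}).
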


\subsection{Global Structure of $\Sigma$}
For an element $\Sigma \in \mathcal{E}(1,0)$ (i.e. a minimal disk),
the existence of a weak $\epsilon$-sheet (i.e. the existence of a
blow-up pair) allowed one to immediately appeal to the one-sided
curvature estimates of \cite{CM4} (see Appendix \ref{oscsec}). This
allowed one to deduce important information about the global
structure of $\Sigma$. In particular, one had a type of
``regularity'' for the set of blow-up pairs; that is, all the
blow-up pairs of $\Sigma$ were forced to lie within a wide cone. A
related property was that if $(0,1)$ is a blow-up pair in $\Sigma$
then blow-up pairs far from $0$ have scale a small fraction of the
distance to $0$.

We will need a similar results for $\Sigma\in \mathcal{E}(1,+)$;
however, because the one-sided curvature estimate is very sensitive
to the topology, the non-trivial genus will introduce some technical
difficulties. We discuss how to overcome these in Appendix
\ref{oscsec}.  As a consequence, we have the following lemma, which
asserts that there is a cone (centered at the origin) so that for
blow-up pairs far from the genus, the pair must lie within the cone,
i.e. we recover the ``regularity'' of the set of blow-up pairs.  We
point out that this result is particularly useful when combined with
Corollary \ref{OscCor} and Proposition \ref{GraphExtPrp}, as the
three imply that for small $\delta'$ and a blow-up pair $(y,s)$ far
from the genus, one may apply the one-sided curvature estimate in
$\mathbf{C}_{\delta'}(y)$ exactly as was done in the case for disks.
\begin{lem}\label{BUPinConeLem}
 There exists a $\delta>0$ and $R>1$ so that if $(y,s)$ is a blow-up pair in $\Sigma$ and $|y|\geq R$ then $y\notin \mathbf{C}_{\delta}(0)$.
\end{lem}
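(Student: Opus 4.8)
The plan is to argue by contradiction, mimicking the analogous ``regularity of blow-up pairs'' argument for disks from \cite{BB1} but using Proposition \ref{GraphExtPrp} in place of Theorem 0.4 of \cite{CM2}, and being careful that everything takes place far from the genus so that the one-sided curvature estimate (Corollary \ref{OscCor}) applies. Suppose no such $\delta$ and $R$ exist. Then there is a sequence of blow-up pairs $(y_i, s_i)$ in $\Sigma$ with $|y_i| \to \infty$ and with $y_i \in \mathbf{C}_{1/i}(0)$, i.e. the $y_i$ cluster (in angle) arbitrarily close to the $x_3=0$ plane through the origin. Since $\Sigma$ has one end, for $i$ large these pairs lie in the annular end $\Sigma \backslash \overline{\Sigma}$, far from the genus.

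The key step is to exploit the global helicoidal structure of $\Sigma$ (Appendix \ref{GlobGeomApp}) together with Proposition \ref{GraphNrConePrp}: after the (fixed) rotation of $\Real^3$, there is a weak $N$-valued $\epsilon$-sheet $\Sigma_g \subset \Sigma$ centered at $0$ on some scale $R_0$, so in particular a definite portion of $\Sigma$ far out is a multivalued graph contained in the cone-complement $\mathbf{C}_\epsilon(0)$, hence disjoint from a slab around the $x_3=0$ plane away from the origin. Now apply Proposition \ref{GraphExtPrp} to $(y_i,s_i)$ for $i$ large: a weak $N$-valued $\epsilon$-sheet $\Sigma_{g,i}$ forms centered at $y_i$ on scale $s_i$ and extends as a graph all the way out. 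This sheet is contained in $\mathbf{C}_\epsilon(y_i)$; since $y_i$ is very close to the $x_3=0$ plane through the origin, $\Sigma_{g,i}$ passes very close to the origin region in the $x_3$-direction while being spread over a large annulus. The contradiction comes from intersecting $\Sigma_{g,i}$ with the sheet $\Sigma_g$ of Proposition \ref{GraphNrConePrp} (or with the genus itself): two distinct multivalued graphs over overlapping annuli, whose ``axes'' are vertical lines through $0$ and through $y_i$ respectively with $y_i$ nearly at height $0$ but far from $0$ horizontally, must intersect transversally — contradicting embeddedness of $\Sigma$. Alternatively, and perhaps more cleanly, one uses the one-sided curvature estimate: because $(y_i,s_i)$ is far from the genus, Corollary \ref{OscCor} lets us apply the one-sided curvature estimate in $\mathbf{C}_{\delta'}(y_i)$; but the sheet $\Sigma_g$ near the origin gives a piece of $\Sigma$ inside $\mathbf{C}_{\delta'}(y_i)$ (since $y_i$ is nearly coplanar with $0$) on one side, forcing a curvature bound at $y_i$ that contradicts $|A|^2(y_i) = C^2 s_i^{-2}$ with $|y_i| \to \infty$ and $s_i$ controlled.

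The main obstacle I anticipate is the bookkeeping needed to make ``$y_i$ nearly in the $x_3=0$ plane'' interact correctly with the one-sided curvature estimate: the estimate needs a \emph{graph} (a piece of $\Sigma$ that is a disk, disjoint from a halfspace bounded by a plane through $y_i$) in the cone $\mathbf{C}_{\delta'}(y_i)$, and one must verify that the multivalued graph $\Sigma_g$ produced by Proposition \ref{GraphNrConePrp}, once we know $y_i \in \mathbf{C}_{1/i}(0)$ with $|y_i|$ large, genuinely supplies such a piece — this is where the hypothesis ``far from the genus'' is essential, since the topology sitting in $B_1$ is exactly what could otherwise obstruct the estimate. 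Controlling $s_i$ (ruling out that the scale itself blows up, so that the curvature at $y_i$ really is large relative to the ambient scale $|y_i|$) should follow from the global structure as in the disk case, but needs to be stated carefully. Once these points are in place, the contradiction with embeddedness is immediate, completing the proof.
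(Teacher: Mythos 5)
Your proposal assembles the right ingredients (Proposition \ref{GraphNrConePrp}, the one-sided curvature estimate, the sensitivity of that estimate to the genus), but neither of the ways you propose to close the contradiction actually works. The embeddedness route is unfounded: the sheet produced by Proposition \ref{GraphExtPrp} centered at $y_i$ lies in $\mathbf{C}_{\epsilon}(y_i)$ and the sheet from Proposition \ref{GraphNrConePrp} lies in $\mathbf{C}_{\epsilon}(0)$, and two nearly horizontal multi-valued graphs with different centers can perfectly well coexist inside one embedded surface -- they simply interleave as sheets of the same spiral (indeed Remark \ref{MultiGraphRem} explicitly allows the centers of the $\epsilon$-sheets making up $\Sigma^i$ to vary); nothing forces a transverse intersection. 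The curvature route is worse: Corollary \ref{OscCor} has the hypothesis $y\notin \mathbf{C}_{\delta}(0)$, which is exactly the conclusion you have negated, and this hypothesis is not a formality. If $y_i\in\mathbf{C}_{1/i}(0)$ with $|y_i|$ large, then for large $i$ the region $\mathbf{C}_{\delta'}(y_i)\backslash B_{2s_i}(y_i)$ contains $B_1\supset\overline{\Sigma}$, so Proposition \ref{GlobTopProp} does not make its components disks and Theorem \ref{oscthm} cannot be applied centered at $y_i$. In other words, ``far from the genus'' in the sense $|y_i|\to\infty$ is not the relevant condition; what the one-sided estimate needs at $y_i$ is precisely that $y_i$ avoid a cone-complement about the origin, which is the statement being proved.

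The repair -- and the paper's actual proof -- is to center the one-sided curvature estimate at the origin instead of at $y_i$, after which no contradiction argument, no appeal to Proposition \ref{GraphExtPrp}, and no control of $s_i$ are needed. Fix $\epsilon=1/2$ and let $\delta_0$ be given by Corollary \ref{OscCor0}; Proposition \ref{GraphNrConePrp} provides a weak $2$-valued $\delta_0$-sheet centered at $0$ on some scale $R_0>1$. Since $\overline{\Sigma}\subset B_1\subset B_{R_0}$, the genus is excluded by the ball rather than by a cone, so Corollary \ref{OscCor0} applies and shows that every component of $\Sigma\cap\left(\mathbf{C}_{\delta_0}(0)\backslash B_{2R_0}\right)$ is a graph with gradient at most $1/2$; such graphical regions contain no blow-up pairs, so one may simply take $\delta=\delta_0$ and $R=2R_0$. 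Your instinct that the genus in $B_1$ is the obstruction was correct, but it must be handled by placing the estimate at $0$, not by hoping that $|y_i|$ large alone suffices at $y_i$.
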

\begin{proof}
Fix $\epsilon=1/2$, and let $\delta_0$ be the value given by
Corollary \ref{OscCor0}.  By Proposition \ref{GraphNrConePrp}, $\Sigma$ contains a weak 2-valued $\delta_0$-sheet centered
at $0$ and with scale $R_0>1$. Thus, as $\overline{\Sigma}\subset
B_1\subset B_{R_0}$, we may apply Corollary \ref{OscCor0} to deduce
that in the set $\mathbf{C}_{\delta_0}\backslash B_{2R_0}$ every component of $\Sigma$ is a graph with gradient bounded
by $1/2$. In particular, there are no blow-up pairs in this set.
Thus, we may take $R=2R_0$ and $\delta=\delta_0$.
\end{proof}

The second global result for disks also generalizes.  Indeed, we
claim that the further a blow-up pair is from the genus, the smaller
the ratio between the scale and the distance to the genus. This is
an immediate consequence of the control on curvature around blow-up
pairs as given by Proposition \ref{genuscbprop} (an extension of
Lemma 2.26 of \cite{CY} to $\Sigma$).  Indeed, for blow-up pairs far
from the genus, the scale is small relative to distance to the
genus:
\begin{cor}
 \label{ConditiononBUPsTHM}  Given $\alpha , C_1>0$ there exists an $R$ such that for
 $(y,s)$, a $C_1$ blow-up pair of $\Sigma$ with $|y| \geq R$ then $s <\alpha|y|$.
\end{cor}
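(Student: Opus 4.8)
The plan is to obtain the corollary as a direct consequence of the curvature control around blow-up pairs supplied by Proposition \ref{genuscbprop}, together with the normalization that the genus $\overline{\Sigma}$ sits inside $B_1$ with $\sup_{\overline{\Sigma}}|A|^2\geq 1$. I would argue by contradiction: suppose the claim fails for some $\alpha,C_1>0$. Since the conclusion for a given value of $\alpha$ automatically holds for all larger values (one may keep the same $R$, as $|y|\geq R$ then forces $s<\alpha|y|<\alpha'|y|$), there is no loss in assuming $\alpha<1$. The failure then yields a sequence of $C_1$ blow-up pairs $(y_i,s_i)$ in $\Sigma$ with $|y_i|\to\infty$ but $s_i\geq \alpha|y_i|$ for every $i$.

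The first step is the elementary observation that, because $|y_i|\geq 1$, the Euclidean ball $B_{2|y_i|}(y_i)$ contains $B_1$, hence all of $\overline{\Sigma}$: if $|x|\leq 1$ then $|x-y_i|\leq 1+|y_i|\leq 2|y_i|$. At the same time, $s_i\geq\alpha|y_i|$ means this ball has radius at most $2\alpha^{-1}s_i$, i.e. a fixed multiple of the blow-up scale. The second step is to feed the pair $(y_i,s_i)$ --- which is far from the genus, since $|y_i|\to\infty$ eventually exceeds any fixed threshold --- into Proposition \ref{genuscbprop} to get a bound of the form $\sup_{B_{2|y_i|}(y_i)\cap\Sigma}|A|^2\leq C_2 s_i^{-2}$, with $C_2$ depending only on $C_1$ and the ratio $2\alpha^{-1}$. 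Restricting this to $\overline{\Sigma}$ and using $s_i\geq\alpha|y_i|$ gives $1\leq\sup_{\overline{\Sigma}}|A|^2\leq C_2(\alpha|y_i|)^{-2}$, which tends to $0$ as $i\to\infty$ --- the desired contradiction. (Equivalently, one reads off the $|y|$-independent bound $s_i\leq C_2^{1/2}$, so $s_i<\alpha|y_i|$ as soon as $|y_i|>R:=C_2^{1/2}/\alpha$, which is the clean direct form of the statement.)

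The substantive point, and the only place any care is needed, is the invocation of Proposition \ref{genuscbprop}: one must verify that its hypotheses are met in this regime --- the pair being a $C_1$ blow-up pair sufficiently far from the genus, with the radius of interest comparable to $s_i$ --- and, crucially, that the curvature bound it produces on the enlarged ball $B_{2|y_i|}(y_i)$ carries a constant depending only on $C_1$ and the fixed ratio $2\alpha^{-1}$, and not on $|y_i|$ itself. Granting that, the argument above is immediate and requires nothing beyond Proposition \ref{genuscbprop} and the rescaling normalization $\overline{\Sigma}\subset B_1$, $\sup_{\overline{\Sigma}}|A|^2\geq 1$.
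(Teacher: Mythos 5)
Your proposal is correct and is essentially the paper's own argument: a contradiction sequence $(y_j,s_j)$ with $s_j\geq\alpha|y_j|$, the observation that $B_1\subset B_{K_1 s_j}(y_j)$ with $K_1=2/\alpha$, the curvature bound $\sup_{B_{K_1 s_j}(y_j)\cap\Sigma}|A|^2\leq K_2 s_j^{-2}$ from Proposition \ref{genuscbprop}, and the clash with the normalization $\sup_{B_1\cap\Sigma}|A|^2\geq 1$ as $|y_j|\to\infty$. The only cosmetic difference is your extra care about hypotheses of Proposition \ref{genuscbprop} (which needs only fixed genus, not distance from the genus); the substance matches.
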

\begin{proof}
Recall we have normalized $\Sigma$ so $\sup_{B_1 \cap \Sigma}
|A|^2\geq 1$. Now suppose the result did not hold. Then there
exists a sequence $(y_j,s_j)$ of $C_1$ blow-up pairs with $|y_j|
\geq j$ and $s_j\geq\alpha |y_j|$.  Set $K_1 = 2/\alpha$. By
Proposition \ref{genuscbprop} there exists $K_2$ such that
$\sup_{B_{K_1s_j}(y_j)\cap \Sigma}|A|^2 \leq K_2s_j^{-2}$.  Since
$B_1\subset B_{K_1 s_j}(y_j)$, $\sup_{B_1\cap \Sigma} |A|^2 \leq
K_2s_j^{-2}$.  But $s_j \geq \alpha|y_j|\geq \alpha j$; thus for
$j$ sufficiently large one obtains a contradiction.
\end{proof}

\subsection{Blow-up Sheets}
In order to get the strict spiraling in the decomposition of
Theorem \ref{FirstMainThm}, we need to check that the multi-valued
graphs that make up most of $\Sigma$ can be consistently
normalized. To that end, we note that, for blow-up pairs far enough
from the genus, one obtains a nearby $\epsilon$-sheet (i.e. we have
a normalized multi-valued graph). This is essentially Theorem 2.5 in \cite{BB1}.

\begin{thm}
\label{InitShtExst} Given $\epsilon>0$, $N\in \mathbb{Z}^+$, there
exist $C_1, C_2>0$ and $R>0$ so:  Suppose that $(y,s)$ is a $C_1$
blow-up pair of $\Sigma$ with $|y| > R$.  Then there exists (after a
rotation of $\Real^3$) an $N$-valued $\epsilon$-sheet
$\Sigma_1=y+\Gamma_{u_1}$ centered at $y$ on scale $s$.
  Moreover,
the separation over $\partial D_s(\Pi(y))$ of $\Sigma_1$ is bounded
below by $C_2 s$.
\end{thm}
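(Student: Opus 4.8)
The plan is to combine Proposition \ref{GraphExtPrp} (which produces a weak $N$-valued $\epsilon$-sheet near a blow-up pair far from the genus) with the standard Colding--Minicozzi machinery that upgrades a weak sheet to a strong one, exactly as in the proof of Theorem 2.5 of \cite{BB1}. First I would fix the target data $\epsilon>0$ and $N$. Apply Proposition \ref{GraphExtPrp} with a smaller parameter $\delta\ll\epsilon$ and a larger number of sheets $N'\gg N$ (the precise dependence is the one recorded in Proposition 2.3 of \cite{BB1}, coming from standard elliptic theory); this yields $C_1>0$ and $R_1>1$ so that any $C_1$ blow-up pair $(y,s)$ with $|y|>R_1$ has, after the (global) rotation of $\Real^3$, a weak $N'$-valued $\delta$-sheet $\Sigma_g=y+\Gamma_{v}$ centered at $y$ on scale $s$. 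By the remark following Definition \ref{epsilonsht}, for $N'$ large and $\delta$ small this weak sheet contains, as a sub-graph, a genuine $N$-valued $\epsilon$-sheet: one passes to the one-valued middle sheet, uses the faster-than-linear Hessian decay from Corollary 2.3 of \cite{MMGPD} (hence the gradient decay \eqref{GradDecay}) to get the normalization $\lim_{\rho\to\infty}\nabla u_1(\rho,0)=0$, and checks the flatness condition \eqref{MainInEq21} on the retained sheets. Call the resulting $N$-valued $\epsilon$-sheet $\Sigma_1=y+\Gamma_{u_1}$, centered at $y$ on scale $s$.

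It remains to prove the lower bound $w_1\ge C_2 s$ on the separation over $\partial D_s(\Pi(y))$. Here I would argue exactly as in \cite{BB1}: the separation $w_1$ of a multi-valued minimal graph is itself (essentially) a solution of a Jacobi-type equation, and by the Harnack inequality together with the flatness bound \eqref{MainInEq21} — which controls $|\nabla w_1|/|w_1|$ — the separation cannot oscillate too wildly between consecutive sheets. So it suffices to produce one point, at height comparable to $s$ over the circle of radius $s$, where the separation is bounded below by a definite multiple of $s$. This in turn follows from the defining property of the blow-up pair: $|A|^2(y)=C^2 s^{-2}$ forces genuinely multi-valued (non-flat) behavior at scale $s$, so the sheets of $\Sigma_1$ near $\partial D_s(\Pi(y))$ must be separated by at least $c\, s$ for some universal $c>0$; were the separation $o(s)$ one could rescale by $1/s$ and extract, via the Colding--Minicozzi lamination/compactness theory (Theorem 0.9 of \cite{CM5}, which we are assuming), a limit that is either a plane or a collection of parallel planes through the rescaled blow-up point — contradicting $\sup_{B_1}|A|^2\ge c>0$ after rescaling, i.e.\ contradicting that $(y,s)$ was a blow-up pair. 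Tracking constants through this argument gives the explicit $C_2=C_2(\epsilon,N)>0$.

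The main obstacle is the same one flagged throughout Section 2: unlike the simply connected case of \cite{BB1}, one must ensure the blow-up pair is far enough from the genus that the disk-based tools (the one-sided curvature estimate, and hence Proposition \ref{GraphExtPrp}) genuinely apply near $(y,s)$. This is handled by choosing $R>R_1$ large enough — using Lemma \ref{BUPinConeLem} and Corollary \ref{ConditiononBUPsTHM} so that for $|y|>R$ the scale $s$ is a small fraction of $|y|$ and the ball $B_{K s}(y)$ is disjoint from $B_1\supset\overline{\Sigma}$ for the relevant constant $K$ — so that all components of $\Sigma$ in that ball are disks and the sheet-extension and separation estimates proceed verbatim from the disk case. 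Everything else is routine bookkeeping with the multi-valued graph estimates.
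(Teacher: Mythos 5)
Your overall strategy is the paper's: the proof given there is verbatim the proof of Theorem 2.5 of \cite{BB1}, with the single modification that the weak sheet is produced by Proposition \ref{GraphExtPrp} (whence the hypothesis $|y|>R$) instead of Theorem 0.2 of \cite{CM2}. Your first and last paragraphs reproduce exactly this scheme --- weak $N'$-valued $\delta$-sheet from Proposition \ref{GraphExtPrp}, upgrade to a strong $N$-valued $\epsilon$-sheet via the elliptic-theory argument of Proposition 2.3 of \cite{BB1} together with \eqref{GradDecay}, and the use of Lemma \ref{BUPinConeLem} and Corollary \ref{ConditiononBUPsTHM} to stay far enough from the genus that the disk-based tools apply.

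The one place you depart from the paper is the lower bound on the separation, which the paper simply inherits from Theorem 2.5 of \cite{BB1} (ultimately a Colding--Minicozzi estimate), and your improvised compactness argument has a gap as written. After rescaling by $1/s$, the curvature at the blow-up point equals $C_1^2$ and is at most $4C_1^2$ on the unit ball, and by Proposition \ref{genuscbprop} it is bounded on compact sets; hence the convergence is smooth near the point and no singular set forms there. In particular, Theorem 0.9 of \cite{CM5} --- which concerns homothetic blow-downs $\lambda_i\Sigma$, $\lambda_i\to 0$, of a fixed surface --- is not the relevant compactness statement, and, more importantly, the limit leaf through the rescaled blow-up point has $|A|^2=C_1^2\neq 0$ there, so it is certainly not a plane. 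What collapsing separations produce is a \emph{different} flat leaf, namely the multiplicity-two (or higher) limit of the collapsed sheets; the existence of such a flat leaf elsewhere in the limit lamination does not by itself contradict the curvature normalization at the center. To close the argument one must transfer the flatness of the collapsed sheets back to the blow-up point, for instance via the one-sided curvature estimate (Theorem \ref{oscthm}, applicable because far from the genus nearby components of $\Sigma$ in balls are disks by Proposition \ref{GlobTopProp}): the nearly coincident graphs play the role of the plane and force $|A|^2(y)\le K s^{-2}$ for a universal $K$, contradicting $|A|^2(y)=4C_1^2 s^{-2}$ only once $C_1$ is chosen larger than this universal constant --- an additional largeness requirement on $C_1$ that does not come for free from Proposition \ref{GraphExtPrp}. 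Alternatively, you may do exactly what the paper does and quote the separation bound as part of Theorem 2.5 of \cite{BB1}, since nothing in that proof beyond the production of the weak sheet is sensitive to the genus.
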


The proof of the theorem is exactly the same as the proof of Theorem
2.5 in \cite{BB1} with one modification. Where the proof of
\cite{BB1} uses Theorem 0.2 of \cite{CM2} to produce a weak
$N_0$-valued sheet ($N_0$ is determined in the proof), one must now
use Proposition \ref{GraphExtPrp}. Thus, in the above hypothesis,
the blow-up pair must satisfy the additional criteria of $|y|> R$ so
that one may appeal to Proposition \ref{GraphExtPrp}.

Following Colding and Minicozzi, we need to next understand the structure of $\Sigma$
between the sheets of this initial multi-valued graph, $\Sigma_1$.
We claim that in between this sheet, $\Sigma$ consists of
exactly one other $\epsilon$-sheet.  To make this more precise, suppose $u$ is defined on
$\pRect{1/2}{\infty}{-\pi N-3\pi}{\pi N+3\pi}$ and $\Gamma_u$ is
embedded.  We define $E$ to be the region over $D_\infty \backslash
D_1$ between the top and bottom sheets of the concentric sub-graph of
$u$.  That is:
\begin{multline}
E=\{(\rho\cos \theta, \rho\sin \theta, t): \\ 1\leq \rho\leq \infty,
-2\pi \leq \theta <0, u(\rho,  \theta-\pi N)<t<u(\rho, \theta+
(N+2)\pi \}.
\end{multline}
When $\Sigma$ is a disk, Colding and Minicozzi in Theorem I.0.10 of
\cite{CM4} show that $\Sigma \cap E \backslash \Sigma_1$ consists of
a single graphical piece.  Thus, using $\Sigma_1$ and the one-sided
curvature estimate of \cite{CM4}, the gradient of this second
graphical component is controlled.  As before, when there are enough
sheets in this second multi-valued graph and the gradient is
controlled, standard elliptic theory establishes \eqref{MainInEq21}
on a sub-graph and hence one obtains two $\epsilon$-sheets spiraling
together. We refer the reader to Theorem 2.6 of \cite{BB1} for the
details.  In the more general setting of this paper, as long as the
part of $\Sigma$ between the sheets of $\Sigma_1$ makes up a second
minimal graph and we can apply the one-sided curvature estimates,
the proof of Theorem 2.6 of \cite{BB1} applies.  Thus, we must
verify both the existence of this second multi-valued graph and that
we are able to apply the one-sided curvature estimate to it.  By
patching together two results of Colding and Minicozzi from
\cite{CM4} the first issue is easily handled.   The global structure
of $\Sigma$, in particular Lemma \ref{BUPinConeLem}, implies that as
long as the blow-up pair is far from the genus there is no problem
handling the second issue either:
\begin{thm} \label{TwoInitShtExstThm}
Given $\epsilon>0$ sufficiently small there exist $C_1,C_2>0$ and
$R>1$ so: Suppose $(y,s)$ is a $C_1$ blow-up pair, with $|y|>R$.
Then there exist two $4$-valued $\epsilon$-sheets
$\Sigma_i=y+\Gamma_{u_i}$ ($i=1,2$) on the scale $s$ centered at $y$
which spiral together (i.e. $u_1(s,0)< u_2(s,0)<u_1(s,2\pi)$).
Moreover, the separation over $\partial D_s(\Pi(y))$ of $\Sigma_i$
is bounded below by $C_2 s$.
\end{thm}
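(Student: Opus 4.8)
The plan is to follow the strategy of Theorem 2.6 of \cite{BB1} essentially verbatim, isolating the two places where the disk hypothesis was used and replacing each by an ingredient already established here. First I would invoke Theorem \ref{InitShtExst}: given $\epsilon>0$ (small) and a sufficiently large $N$ to be chosen, there exist constants $C_1',C_2'>0$ and $R'>1$ so that for a $C_1'$ blow-up pair $(y,s)$ with $|y|>R'$ one obtains (after the fixed rotation of $\Real^3$) an $N$-valued $\epsilon'$-sheet $\Sigma_1=y+\Gamma_{u_1}$ centered at $y$ on scale $s$, with separation over $\partial D_s(\Pi(y))$ bounded below by $C_2' s$. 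Here $\epsilon'$ and $N$ are chosen so that the elliptic-theory passage at the end produces genuine $\epsilon$-sheets on a $4$-valued subgraph. After translating so that $y=0$ and rescaling so that $s=1$, consider the region $E$ between the top and bottom sheets of a concentric subgraph of $u_1$ as in the displayed definition of $E$ above.

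The first issue is the existence of a second multi-valued graph: one must show that $(\Sigma\cap E)\setminus\Sigma_1$ is itself a (single) multi-valued minimal graph with many sheets. As noted in the excerpt, this is handled exactly as in \cite{BB1} by patching together the two results of Colding--Minicozzi from \cite{CM4} — the statement (Theorem I.0.10 of \cite{CM4}) that between consecutive sheets of an embedded minimal multi-valued graph inside an embedded minimal surface with connected boundary there is exactly one further graphical piece, together with the corresponding area/curvature bound. Crucially, these statements of \cite{CM4} require only connected boundary, not that $\Sigma$ be a disk, so they apply to $\Sigma\in\mathcal E(1,+)$ directly once we work in a ball far from the genus $\overline\Sigma\subset B_1$; I would enlarge $R$ if necessary so that $B_{Ks}(y)$ avoids $B_1$ for the relevant geometric constant $K$. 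This yields a second multi-valued graph $\tilde\Sigma\subset\Sigma$ over $D_\infty\setminus D_1$, spiraling between the sheets of $\Sigma_1$, with a definite number of sheets once $N$ was taken large.

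The second, and I expect the main, issue is that to upgrade the bare $C^1$ control on $\tilde\Sigma$ into the quantitative flatness \eqref{MainInEq21} needed to extract a strong $\epsilon$-sheet, one needs to apply the one-sided curvature estimate of \cite{CM4} to $\tilde\Sigma$ using $\Sigma_1$ as the "one side." For disks this was immediate; here the one-sided curvature estimate is sensitive to topology, so one must know that near the blow-up pair there is no genus to obstruct it. This is exactly where Lemma \ref{BUPinConeLem} (together with Corollary \ref{OscCor} and Corollary \ref{ConditiononBUPsTHM}) enters: for $(y,s)$ a blow-up pair with $|y|$ large enough, $y\notin\mathbf C_\delta(0)$, the scale $s$ is a small fraction of $|y|$, and hence the relevant cones $\mathbf C_{\delta'}(y)$ on which the curvature estimate is applied are disjoint from $B_1$, so $\Sigma$ restricted to that region is topologically a disk and the estimate of \cite{CM4} applies verbatim. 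Once the one-sided curvature estimate gives a uniform gradient (hence curvature) bound on $\tilde\Sigma$, standard elliptic estimates (as in Proposition 2.3 of \cite{BB1} and the discussion following Definition \ref{epsilonsht}) promote, on a $4$-valued concentric subgraph, the $C^1$ bound to the full inequality \eqref{MainInEq21} and the normalization $\lim_{\rho\to\infty}\nabla u_i(\rho,0)=0$, producing the second strong $4$-valued $\epsilon$-sheet $\Sigma_2=y+\Gamma_{u_2}$. The spiraling inequality $u_1(s,0)<u_2(s,0)<u_1(s,2\pi)$ is automatic from the construction of $\tilde\Sigma$ as lying between consecutive sheets of $\Sigma_1$, and the lower bound $C_2 s$ on the separation of $\Sigma_2$ over $\partial D_s(\Pi(y))$ follows as in \cite{BB1} from the corresponding bound for $\Sigma_1$ together with the fact that $\Sigma_2$ is trapped between consecutive sheets of $\Sigma_1$. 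Taking $C_1$ to be the larger of $C_1'$ and the constant demanded by Proposition \ref{GraphExtPrp}, $C_2$ the smaller of the two separation constants, and $R$ large enough to accommodate all of the above applications completes the proof.
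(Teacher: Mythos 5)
Your treatment of the second issue is exactly the paper's: Lemma \ref{BUPinConeLem}, Corollary \ref{ConditiononBUPsTHM} and Corollary \ref{OscCor} let you run the one-sided curvature estimate in $\mathbf{C}_{\delta_0}(y)$ as in the disk case, and then the elliptic-theory upgrade to \eqref{MainInEq21} is the argument of Theorem 2.6 of \cite{BB1}. The gap is in the first issue, the existence of the second multi-valued graph in $E$. Theorem I.0.10 of \cite{CM4} is a statement about disks, so it applies only where $\Sigma$ meets the relevant balls in disks, i.e.\ in a ball $B_{R_0}(y)$ with (say) $B_{4R_0}(y)$ disjoint from the genus. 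Enlarging $R$ so that $B_{Ks}(y)$ misses $B_1$ only buys you this local conclusion near the pair; it cannot give a graph ``over $D_\infty\setminus D_1$ with a definite number of sheets,'' because the region $E$ is unbounded and, at radii comparable to $|y|$, sweeps past the genus, where the disk hypothesis fails no matter how large $R$ is taken. The two results of \cite{CM4} that actually get patched are Theorem I.0.10, giving an inner graph $\Sigma_2^{in}=B_{R_0}(y)\cap E\cap\Sigma\setminus\Sigma_1$ (nonempty only after arranging $|y|\geq 8s$ via Corollary \ref{ConditiononBUPsTHM}), and Appendix D of \cite{CM4}, which describes $\Sigma$ outside a very large ball $B_{R_1}$ centered at the genus and yields an outer graph $\Sigma_2^{out}=E\cap\Sigma\setminus(B_{R_1}\cup\Sigma_1)$; your ``corresponding area/curvature bound'' is not this second ingredient.

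Moreover, the patching itself requires an argument you have omitted: by the choice of $\delta_0$ and $R$, the one-sided curvature estimate shows every component of $E\cap\Sigma\setminus\Sigma_1$ is a graph with gradient at most $\epsilon/2$, and one must then show $\Sigma_2^{in}$ and $\Sigma_2^{out}$ lie in the same component. If they did not, then, since $\Sigma_2^{in}$ is a graph and $\Sigma$ is complete, $\Sigma_2^{in}$ would have to continue inside $E$ past $B_{R_1}$, producing two components of $\Sigma\setminus\Sigma_1$ in $E\cap\Sigma\setminus B_{R_1}$ and contradicting Appendix D of \cite{CM4}. Without identifying the correct second (global) ingredient and carrying out this connectedness step, the construction of your single spiraling graph between the sheets of $\Sigma_1$ — the only genuinely new point beyond \cite{BB1} in this theorem — does not go through.
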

\begin{rem} We refer to $\Sigma_1, \Sigma_2$ as (\emph{$\epsilon$-})blow-up sheets \emph{associated
with}
$(y,s)$.
\end{rem}
\begin{proof}
Let $\delta>0$ and $R>1$ be given by Lemma \ref{BUPinConeLem}. Using
this $\delta$ and $\epsilon/2$, pick $\delta_0<\epsilon/2$ as in
Corollary \ref{OscCor} (and increase $R$ if needed).  Theorem
\ref{InitShtExst} gives one $\tilde{N}$-valued $\delta_0$-sheet,
$\Sigma_1$, forming near $(y,s)$ for appropriately chosen $C_1$ (and
possibly after again increasing $R$). Here we choose the
$\tilde{N}>4$ as in Theorem 2.6 of \cite{BB1} -- this
allows one to establish \eqref{MainInEq21} on a sub-graph of the
second graph.  Indeed, once we establish that $E$, the region between
the sheets of $\Sigma_1$, is a weak $(\tilde{N}-4)$-valued $\epsilon/2$-sheet the argument of Theorem 2.6 of \cite{BB1} carries over unchanged.

We now show that $\Sigma \cap E \backslash \Sigma_1$ consists of
exactly one multi-valued graph.  Theorem I.0.10 of \cite{CM4}
implies that near the blow-up pair the part of $\Sigma$ between
$\Sigma_1$ is a $\tilde{N}-4$ sheeted graph $\Sigma_2^{in}$; i.e. if
$R_0$ is chosen so $B_{4R_0}(y)$ is disjoint from the genus then
$B_{R_0}(y)\cap E\cap\Sigma\backslash \Sigma_1=\Sigma_2^{in}$. To
ensure $\Sigma_2^{in}$ is non-empty, we increase $R$ so that
$|y|\geq 8s$ (which we may do by Corollary
\ref{ConditiononBUPsTHM}). On the other hand, Appendix D of
\cite{CM4} guarantees that, outside of a very large ball centered at
the genus, the part of $\Sigma$ between $\Sigma_1$ is a
$\tilde{N}-4$ sheeted graph, $\Sigma_2^{out}$. That is, for $R_1\geq
|y|$ large, $E\cap \Sigma\backslash (B_{R_1}\cup
\Sigma_1)=\Sigma_2^{out}$.  By our choice of $\delta_0,R$, we can
now apply the one-sided curvature estimate in $E$, and so all the
components of $E\backslash \Sigma_1$ are graphs with gradient
bounded by $\epsilon/2$. Thus, it suffices to show that
$\Sigma_2^{in}$ and $\Sigma_2^{out}$ are subsets of the same
component.  If this was not the case, then, as $\Sigma_2^{in}$ is a
graph and $\Sigma$ is complete, $\Sigma_2^{in}$ must extend inside
$E$ beyond $B_{R_1}$.  But this contradicts Appendix D of \cite{CM4}
by giving two components of $\Sigma\backslash \Sigma_1$ in $E\cap
\Sigma\backslash B_{R_1}$.
\end{proof}

\subsection{Existence of Blow-Up Pairs}
While the properties of $\epsilon$-sheets will give the strictly
spiraling region of $\Sigma$, $\mathcal{R}_S$, to understand the
region where these sheets fit together (i.e. what will become
$\mathcal{R}_A$), we need a handle on the distribution of the
blow-up pairs of $\Sigma$. Notice that the global structural results
discussed above, i.e. Lemma \ref{BUPinConeLem} and Corollary
\ref{ConditiononBUPsTHM}, give weak information of this sort.

In the case of trivial topology -- i.e.  Theorem 2.8 of \cite{BB1}
-- non-flatness gives one blow-up pair $(y_0,s_0)$, which in turn
yields associated blow-up sheets. Then by Corollary III.3.5 of
\cite{CM3} coupled with the one-sided curvature estimate, the
blow-up sheets give the existence of nearby blow-up pairs $(y_{\pm
1},s_{\pm 1})$ above and below $(y_0,s_0)$ (see also Lemma 2.5 of
\cite{CY}). Iterating, one determines a sequence of blow-up pairs
that are then used to construct the decomposition. The extension of
the argument to surfaces in $\mathcal{E}(1,+)$ is much the same,
though again there are various technical difficulties complicating
matters. Essentially, the proof will rely on three things.  First,
the result of \cite{CM3} is local; it depends on the topology being
trivial in a large ball relative to the scale of the blow-up pair.
Second, by Lemma \ref{BUPinConeLem} and Corollary \ref{OscCor}, for
blow-up pairs sufficiently far from the genus, we can apply the
one-sided curvature estimate. Thus, we conclude that points of large
curvature near a blow-up pair must lie within a cone with vertex the
point of the blow-up pair. As a consequence, blow-up pairs can be
constructed that are truly above (or below) a given blow-up pair.
Third, Corollary \ref{ConditiononBUPsTHM} implies that the scale of
blow-up pairs far from the genus is small relative to this distance.

Thus, it will suffice to find two blow-up pairs far from the
genus in $\Sigma$, one above and one below the genus.  We first verify this is possible:
\begin{lem}\label{firstbupprop}
Given $\epsilon>0$ sufficiently small and $h>1$, $C_1>0$, there
exist pairs $(y_\pm,s_\pm)$ such that $(y_\pm,s_\pm)$ are $C_1$
blow-up pairs of $\Sigma$ and $x_3(y_+)>h>-h>x_3(y_-)$.
\end{lem}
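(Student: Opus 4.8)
The plan is to produce a blow-up pair above the genus and, by the same argument applied to the reflected situation, one below; since $\Sigma$ is not flat (it has positive genus) and has infinite total curvature, it must have unbounded curvature along the end, and we exploit this together with the properness of $\Sigma$. First I would fix the constant $C_1$ and argue that points of large curvature occur arbitrarily far up the end. Concretely, since $\Sigma \in \mathcal{E}(1,+)$ has one end and infinite total curvature, $\Sigma \cap \{x_3 > h\}$ contains a noncompact piece on which $\sup |A|^2 = \infty$: if instead $|A|^2$ were bounded on $\{x_3 > h\}$, then that part of the end would have bounded geometry, and combined with properness and the fact that $\Sigma$ has one end, one could derive that the end has finite total curvature (hence is asymptotic to a plane or catenoid), contradicting that $\Sigma$ is a nonplanar one-ended surface — or, more elementarily, one appeals to the global helicoid-like structure from Appendix \ref{GlobGeomApp} which forces curvature concentration arbitrarily high up.

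Next I would run the standard point-selection (blow-up) argument of Colding--Minicozzi to convert ``unbounded curvature high on the end'' into an actual $C_1$ blow-up pair with base point high on the end. Pick a point $p_n$ with $x_3(p_n) > h$, $|p_n| \to \infty$, and $|A|^2(p_n) \to \infty$. Apply the usual iteration: on the intrinsic ball $\mathcal{B}_{r_n}(p_n)$ for a suitable radius $r_n$, either $(p_n, C_1/|A|(p_n))$ is already a $C_1$ blow-up pair, or there is a nearby point with strictly larger curvature; since $\Sigma$ is smooth and properly embedded this process terminates and yields a point $y_+$ with $\sup_{B_{s_+}(y_+)\cap\Sigma}|A|^2 \le 4|A|^2(y_+) = 4C_1^2 s_+^{-2}$. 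The only thing to check is that we may keep $x_3(y_+) > h$: since the curvature scale $s_+ = C_1/|A|(y_+)$ shrinks as we chase larger curvature, and we started with $|A|(p_n)$ as large as we like, the base point of the pair moves a distance at most comparable to the initial intrinsic radius, which we can take small relative to $x_3(p_n) - h$; hence $x_3(y_+) > h$ is preserved. Symmetrically (reflecting in a horizontal plane, or simply repeating the argument on $\{x_3 < -h\}$, which likewise carries unbounded curvature) we obtain $(y_-, s_-)$ with $x_3(y_-) < -h$.

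The main obstacle is establishing cleanly that \emph{both} the top and the bottom of the end carry unbounded curvature — i.e., that we cannot have all large-curvature points escaping to $+\infty$ in the $x_3$-direction only. I expect to handle this using the global structure of $\Sigma$ as a one-ended surface: by Appendix \ref{GlobGeomApp}, outside a large ball $\Sigma$ looks like a helicoid, so it limits (in a weak/lamination sense) to a foliation by parallel planes with a singular set that is a Lipschitz curve transverse to the planes; curvature blows up near that singular set, which extends to both $x_3 = +\infty$ and $x_3 = -\infty$. This forces large-curvature points with $x_3$ arbitrarily large positive \emph{and} arbitrarily large negative, giving both pairs. Once the two pairs exist, the bound $\epsilon$ small plays no role here except through the constants in the cited structural results, so no further estimate is needed.
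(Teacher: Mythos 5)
The central premise of your first two paragraphs --- that $\Sigma$ must have $\sup|A|^2=\infty$ on $\{x_3>h\}$ --- is false, and the reasoning you offer for it (bounded curvature plus properness and one end would force finite total curvature) does not hold: the helicoid has uniformly bounded curvature and infinite total curvature, and a non-flat $\Sigma\in\mathcal{E}(1)$ is in fact asymptotically helicoidal, so its curvature is typically bounded near the end. A $C_1$ blow-up pair does not require large pointwise curvature; it requires largeness of the scale-invariant quantity, e.g.\ points $x_\pm$ with $|x_\pm|^2|A|^2(x_\pm)\geq 4C_1^2$. Because of this, your point-selection step, which begins by choosing $p_n$ with $|A|^2(p_n)\to\infty$, starts from points that need not exist, and your height-preservation argument (``the curvature scale shrinks as we chase larger curvature, so the base point moves little'') also collapses: when $|A|$ is bounded, the radius fed into the blow-up lemma (Lemma 5.1 of \cite{CM2}) must be taken comparable to the distance scale at which the scale-invariant curvature is large, and the selected point can a priori wander a comparable distance, so keeping $x_3(y_+)>h$ is exactly the nontrivial issue rather than a small perturbation.

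The paper supplies precisely the two ingredients you are missing. First, it builds two multi-valued $\epsilon$-sheets around the genus (Proposition \ref{GraphNrConePrp}, Appendix D of \cite{CM4}, Corollary \ref{OscCor0}) and then invokes Theorem III.3.1 of \cite{CM3} --- the connected-boundary analogue of Corollary III.3.5 --- which for every $r_0$ produces points $x_\pm\in\Sigma\backslash B_{4r_0}$ above and below the sheets with $|x_\pm|^2|A|^2(x_\pm)\geq 4C_1^2$; this is the correct scale-invariant substitute for ``unbounded curvature above and below.'' Second, after the standard blow-up argument yields $C_1$ pairs $(y_\pm,s_\pm)$ with $|y_\pm|\geq\gamma r_0$, the height control comes not from tracking how far the point moved but from Lemma \ref{BUPinConeLem}: blow-up pairs far from the genus lie outside $\mathbf{C}_{\delta}(0)$, hence inside a fixed cone about the $x_3$-axis, so taking $r_0$ large forces $x_3(y_+)>h>-h>x_3(y_-)$. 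Your appeal to the lamination theorem of Appendix \ref{GlobGeomApp} could in principle replace the first ingredient, but only after reinterpreting ``the curvature blows up along $\mathcal{S}$'' scale-invariantly --- it is a statement about the rescaled surfaces $\lambda_j\Sigma$, i.e.\ about $\sup_{B_r(x)\cap\lambda_j\Sigma}|A|^2$, not about pointwise blow-up of $|A|^2$ on $\Sigma$ --- and as written your proposal never makes that translation, so the gap remains.
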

\begin{proof} Fix a $\delta_0>0$ small, it will be specified in what follows.
Proposition \ref{GraphNrConePrp} of this paper and Appendix D of
\cite{CM4} together guarantee the existence of two
$\tilde{N}$-valued graphs spiraling together over an unbounded
annulus (with inner radius $\overline{R}$) and lying in
$\mathbf{C}_{\delta_0}$. Moreover, by construction, one of these
is a weak $\delta_0$-sheet.  By using Corollary \ref{OscCor0}, and
replacing $\overline{R}$ by $2\overline{R}$ we can control the
gradient on both $\tilde{N}$-valued graphs. As before, for large
enough $\tilde{N}$ and sufficiently small $\delta_0$, we get
\eqref{MainInEq21} on a sub-graph of both graphs.
 Because Proposition \ref{GraphNrConePrp} already provides the necessary rotation, we get two $N$-valued $\epsilon$-sheets around the genus,
$\Sigma_1,\Sigma_2$, on some scale $\tilde{R}$ and in
$\mathbf{C}_\epsilon$.  We may make $\epsilon$ as small as we like
by shrinking $\delta_0$.

Theorem III.3.1 of \cite{CM3} is the analogue to Corollary III.3.5
of \cite{CM3} for minimal surfaces with connected boundary. Thus,
for any $r_0 \geq \max \{1, \tilde{R}\}$, Theorem III.3.1 implies
there is large curvature above and below the genus at points
$x_\pm$. Precisely, there exist $x_\pm \in \Sigma \backslash
B_{4r_0}$ such that $|x_\pm|^2|A|^2(x_\pm) \geq 4C_1^2$. Hence, by
a standard blow-up argument (see Lemma 5.1 of \cite{CM2}), one
gets the desired $C_1$ blow-up pairs $(y_\pm,s_\pm)$ above and
below the genus and with $|y_\pm|\geq \gamma r_0$ where here
$\gamma$ is small and depends only on $C_1$. Lemma
\ref{BUPinConeLem} implies, after increasing $r_0$ if needed, that the
$y_\pm$ do not lie in $\mathbf{C}_{\delta}(0)$ and thus by
increasing $r_0$ further (by an amount depending only on $\gamma$,
$\delta$ and $h$) one has $x_3(y_+)>h>-h>x_3(y_-)$.
\end{proof}

Thus, we may iteratively construct the desired sequence of blow-up
pairs. This sequence will be used to construct the region
$\mathcal{R}_A$ in the next section.

\begin{prop} \label{AbvBelBUPprp} Given $\epsilon>0$ sufficiently small, there exist constants $C_1,
C_{in}>0$ and a sequence $(\tilde{y}_i,\tilde{s}_i)$ ($i \in
\mathbb{Z}\backslash \{0\}$) of $C_1$ blow-up pairs of $\Sigma$ such
that: the sheets associated to $(\tilde{y}_i,\tilde{s}_i)$ are
$\epsilon$-sheets on scale $\tilde{s}_i$ centered at $\tilde{y}_i$
and $x_3(\tilde{y}_i)<x_3(\tilde{y}_{i+1})$. Moreover, for $i\geq 1$,
$\tilde{y}_{i+1}\in B_{C_{in} \tilde{s}_i} (\tilde{y}_i)$ while for
$i\leq -1$, $\tilde{y}_{i-1}\in B_{C_{in}
\tilde{s}_i}(\tilde{y}_i)$.
\end{prop}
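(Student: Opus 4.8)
The plan is to start from a single blow-up pair far from the genus and iterate upward and downward, using the one-sided curvature estimate and the local results of Colding--Minicozzi to produce the next pair in the sequence. First I would invoke Lemma \ref{firstbupprop} with $C_1$ chosen (as in Theorem \ref{TwoInitShtExstThm}) large enough that associated $\epsilon$-sheets form, and with $h$ large enough that the genus $\overline{\Sigma}\subset B_1$ lies strictly between the heights $x_3(y_-)$ and $x_3(y_+)$; set $\tilde y_1=y_+$, $\tilde s_1 = s_+$ and $\tilde y_{-1}=y_-$, $\tilde s_{-1}=s_-$. By Theorem \ref{TwoInitShtExstThm} these pairs carry blow-up sheets that are genuine $\epsilon$-sheets, and by Lemma \ref{BUPinConeLem} (increasing $R$, hence $h$, if necessary) they lie outside the cone $\mathbf{C}_\delta(0)$, so Corollary \ref{OscCor} applies and the one-sided curvature estimate is available in a cone $\mathbf{C}_{\delta'}(\tilde y_1)$.

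Next I would carry out the inductive step for $i\geq 1$ (the case $i\leq -1$ being symmetric, going downward). Given $(\tilde y_i,\tilde s_i)$ with its associated $\epsilon$-sheets $\Sigma_1,\Sigma_2$ spiraling together on scale $\tilde s_i$, I would use Corollary III.3.5 of \cite{CM3} — which is local and so applies since, by Corollary \ref{ConditiononBUPsTHM}, $\tilde s_i$ is small relative to $|\tilde y_i|$ and hence to the distance from the genus — together with the one-sided curvature estimate to locate a point of large curvature $x_{i+1}$ lying genuinely \emph{above} $\tilde y_i$ (i.e. with $x_3(x_{i+1}) > x_3(\tilde y_i)$) and within a controlled multiple of $\tilde s_i$ of $\tilde y_i$; the containment of nearby high-curvature points in a cone with vertex $\tilde y_i$, forced by the one-sided estimate exactly as in the disk case, is what guarantees ``above'' rather than merely ``near.'' A standard blow-up argument (Lemma 5.1 of \cite{CM2}) then upgrades $x_{i+1}$ to a $C_1$ blow-up pair $(\tilde y_{i+1},\tilde s_{i+1})$ with $x_3(\tilde y_{i+1}) > x_3(\tilde y_i)$ and $\tilde y_{i+1}\in B_{C_{in}\tilde s_i}(\tilde y_i)$ for a uniform constant $C_{in}$. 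Since $|\tilde y_{i+1}|\geq |\tilde y_i|\geq R$, Theorem \ref{TwoInitShtExstThm} again furnishes associated $\epsilon$-sheets and Lemma \ref{BUPinConeLem} keeps $\tilde y_{i+1}$ outside $\mathbf{C}_\delta(0)$, so the hypotheses needed to continue are reproduced and the induction closes. Finally I would note that the heights $x_3(\tilde y_i)$ are strictly increasing by construction, and that the downward chain and upward chain do not collide because the genus separates the initial pairs and each step moves monotonically in $x_3$ away from it; relabeling gives the sequence indexed by $\mathbb{Z}\backslash\{0\}$.

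The main obstacle I expect is the inductive step's ``above, not merely near'' claim: ensuring that the new high-curvature point genuinely has larger $x_3$ than $\tilde y_i$ rather than lying laterally or below. This is where the genus causes trouble, because Corollary III.3.5 of \cite{CM3} and the one-sided curvature estimate are, as the text emphasizes, sensitive to topology; one must check that the ball in which the disk-type arguments are run is disjoint from the genus, which is precisely what Corollary \ref{ConditiononBUPsTHM} (scale small relative to distance to the genus) and Lemma \ref{BUPinConeLem} (blow-up pairs trapped outside the central cone, so the one-sided estimate applies in $\mathbf{C}_{\delta'}(\tilde y_i)$) are designed to supply. Assembling these so that the cone-containment of nearby blow-up points holds uniformly along the whole sequence — with constants $C_1$, $C_{in}$ independent of $i$ — is the crux; once that is in place, the monotonicity of $x_3$ and the extraction of blow-up pairs are routine and follow \cite{BB1} verbatim.
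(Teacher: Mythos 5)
Your proposal follows essentially the same route as the paper: initial pairs above and below the genus from Lemma \ref{firstbupprop}, then an iteration based on Corollary III.3.5 of \cite{CM3} and the blow-up argument of Lemma 5.1 of \cite{CM2}, with the one-sided curvature estimate made available through Lemma \ref{BUPinConeLem}, Corollary \ref{OscCor} and Corollary \ref{ConditiononBUPsTHM} (and the disk hypothesis near each pair coming from the ball being disjoint from the genus, i.e.\ Proposition \ref{GlobTopProp}). The only slip is the closing claim $|\tilde{y}_{i+1}|\geq |\tilde{y}_i|$, which need not hold since the new point is only known to lie in $B_{C_{in}\tilde{s}_i}(\tilde{y}_i)$; the induction is instead kept alive by the height, exactly as the paper arranges it, since $|\tilde{y}_{i+1}|\geq x_3(\tilde{y}_{i+1})>x_3(\tilde{y}_1)\geq h_1\geq R$ once the initial pair is taken high enough.
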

\begin{proof}
Without loss of generality, we work above the genus (i.e. for
$x_3>1$ and $i\geq 1$), as the argument below the genus is
identical. Let $\delta,R>0$ be given by Lemma \ref{BUPinConeLem}.
Thus, if $(y,s)$ is a blow-up pair in $\Sigma$ so that $|y|\geq R$
then $y\notin \mathbf{C}_{\delta}$.  Moreover, using $\epsilon$ and
$\delta$, let $\delta_0$ be given by Corollary \ref{OscCor} and
increase, if needed, $R$ as indicated by the corollary.  We are free
to shrink $\delta_0$, so assume that $\delta_0 \leq \epsilon$. Use
Theorem \ref{TwoInitShtExstThm} with  $\delta_0$ to choose $C_1,
C_2$ and increase $R$, if needed, as indicated by the theorem. Thus,
for any $(y,s)$ a $C_1$ blow-up pair with $|y|\geq R$, we have
$\delta_0$-sheets (which, as $\delta_0\leq\epsilon$ are also
$\epsilon$-sheets) associated to $(y,s)$. Moreover, this and the
choice of $R$ imply that Corollary \ref{OscCor} applies in
$\mathbf{C}_{\delta_0}(y)\backslash B_{2s}(y)$.

Corollary III.3.5 of \cite{CM3} and a standard blow-up argument
give constants $C_{out}>C_{in}>0$ such that, for a $C_1$ blow-up
pair $(y,s)$, as long as the component of $B_{C_{out} s} (y)\cap
\Sigma$ containing $y$ is a disk and there are blow-up sheets
associated to $(y,s)$, then we can find blow-up pairs
$(y_\pm,s_\pm)$ above and below $(y,s)$ (in a weak sense) and
inside $B_{C_{in} s} (y)$.
 If, in addition, we can apply Corollary
\ref{OscCor} centered at $y$,  then we can ensure $x_3(y_+)>
x_3(y)>x_3(y_-)$. Corollary \ref{ConditiononBUPsTHM} and
Proposition \ref{GlobTopProp} together give a value $h_1\geq R$,
depending on $C_{out}$, so for $|y|\geq h_1$ the component of
$B_{C_{out} s} (y)\cap \Sigma$ containing $y$ is a disk.

It now suffices to find an initial blow-up pair
$(\tilde{y}_1,\tilde{s}_1)$ with $x_3(\tilde{y}_1) \geq h_1$, as
repeated application of the argument of the above paragraph gives
the sequence $(\tilde{y}_i,\tilde{s}_i)$. Lemma \ref{firstbupprop},
with $h_1$ replacing $h$, gives the existence of the desired initial
blow-up pair.
\end{proof}
\section{Structural Decomposition of $\Sigma$}
We prove Theorem \ref{FirstMainThm} and Proposition
\ref{SecondMainThm} in subsection \ref{DecSec}.

\subsection{Constructing $\mathcal{R}_S$}

The decomposition of $\Sigma$ now proceeds as in Section 4 of
\cite{BB1}, with Proposition \ref{uThetaLowBndPrp} giving strict
spiraling far enough out in the $\epsilon$-sheets of $\Sigma$. After
specifying a region of strict spiraling, $\mathcal{R}_S$, the
remainder of $\Sigma$ will be split into the connected component
containing the genus, $\mathcal{R}_G$, and the region containing the
points of large curvature, $\mathcal{R}_A$.

In the interest of clarity we restate two results from \cite{BB1}
that we will need to prove our decomposition. The first result
gives the strict spiraling of $\epsilon$-sheets.
\begin{prop} (Proposition 3.3 in \cite{BB1})
\label{uThetaLowBndPrp} There exists an $\epsilon_0$ so: Suppose
 $\Gamma_u$ is a 3-valued $\epsilon$-sheet on scale 1 with
 $\epsilon<\epsilon_0$ and $w(1,\theta)=u(1, \theta + 2\pi)-u(1,\theta)\geq  C_2>0$.  Then
there exists $C_3=C_3(C_2)\geq 2$, so that on
$\pRect{C_3}{\infty}{-\pi}{\pi }$:
\begin{equation}
\label{uThetaLowBnd}
 u_\theta (\rho,\theta)\geq \frac{C_2}{8\pi}\rho^{-\epsilon}.
\end{equation}
\end{prop}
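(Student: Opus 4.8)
The statement to prove is Proposition \ref{uThetaLowBndPrp}: a lower bound $u_\theta \geq \frac{C_2}{8\pi}\rho^{-\epsilon}$ on a 3-valued $\epsilon$-sheet once the separation at $\rho=1$ is bounded below by $C_2$. Since this is quoted verbatim as ``Proposition 3.3 in \cite{BB1}'', I expect the argument to be a short PDE/ODE comparison estimate. The plan is to exploit the fact that on a multi-valued minimal graph the separation $w$ is itself (essentially) a solution of the linearized minimal surface equation, hence a Jacobi-type field, and that the flatness hypothesis \eqref{MainInEq21} makes this equation a small perturbation of the flat Laplacian in the $(\log\rho,\theta)$ coordinates. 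In those coordinates multi-valued harmonic functions with the relevant symmetry decouple, and one gets that $w$ cannot decay faster than a fixed power $\rho^{-\epsilon}$; combined with the relation between $w$ and $u_\theta$ this yields the claim.

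\textbf{Key steps.} First I would change to logarithmic polar coordinates $t=\log\rho$, $\theta$, where the minimal surface equation and its linearization become uniformly elliptic with coefficients controlled by $\epsilon$ via \eqref{MainInEq21}; the terms $4\rho\frac{|\nabla w|}{|w|}+\rho^2\frac{|\Hess_w|}{|w|}\leq\epsilon$ are exactly what is needed to treat $w$ as an almost-harmonic positive function. Second, using the embeddedness ($w\neq 0$, so WLOG $w>0$) and a Harnack/maximum-principle argument on the one-valued middle sheet, I would show $\log w$ satisfies $|\partial_t \log w + \text{(something } O(\epsilon))| $ is controlled, giving $w(\rho,\theta)\geq w(1,\theta')\rho^{-C\epsilon}\geq C_2 \rho^{-C\epsilon}$ up to adjusting constants — this is where the hypothesis $w(1,\theta)\geq C_2$ enters and where the power $\rho^{-\epsilon}$ (with the precise constant) must be extracted, presumably by choosing $C_3$ large enough that transient contributions have died off. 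Third, I would convert the lower bound on $w$ into one on $u_\theta$: since $w(\rho,\theta)=\int_0^{2\pi} u_\theta(\rho,\theta+s)\,ds$ over a 3-valued sheet one controls the average of $u_\theta$, and then the gradient-decay estimate \eqref{GradDecay} together with the Hessian bound in \eqref{MainInEq21} ($\rho|\Hess_u|\leq\epsilon$, so $u_\theta$ varies slowly in $\theta$ on the scale $2\pi$) upgrades the average bound to the pointwise bound $u_\theta(\rho,\theta)\geq \frac{C_2}{8\pi}\rho^{-\epsilon}$, the factor $8\pi$ absorbing $2\pi$ from the averaging and a further factor from the oscillation control and sign considerations.

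\textbf{Main obstacle.} The delicate point is getting the \emph{exact} exponent $\rho^{-\epsilon}$ and constant $\frac{C_2}{8\pi}$ rather than merely $\rho^{-C\epsilon}$ with an unspecified constant; this forces one to track the perturbation of the linearized equation carefully and to use that $C_3=C_3(C_2)$ can be taken large, so that near-field errors are swept into the choice of $C_3$ and only the sharp far-field decay rate — governed by the flat operator, whose relevant indicial root is $O(\epsilon)$ — survives. A secondary difficulty is ensuring the sign of $u_\theta$ is consistent (strict spiraling in one direction), which follows from $w>0$ on a sheet with enough sheets (3-valued) so that $u_\theta$ cannot change sign without violating the separation or the flatness bound. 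Since the statement is imported wholesale from \cite{BB1}, I would expect the paper to simply cite it and not reprove it; if a proof were demanded here, the above comparison-in-log-coordinates scheme is the route I would take.
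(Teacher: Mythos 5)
You are right that this paper never reproves the proposition -- it is restated verbatim from \cite{BB1} (Proposition 3.3 there) and cited -- so the relevant comparison is with the proof in that paper. Your first step is fine, and in fact less delicate than you suggest: the flatness condition \eqref{MainInEq21} already contains $4\rho|\nabla w|/|w|\leq\epsilon$, so integrating $|\partial_\rho\log w|\leq \epsilon/(4\rho)$ and $|\partial_\theta\log w|\leq \epsilon/4$ across the middle sheet gives $w(\rho,\theta)\geq c\,C_2\,\rho^{-\epsilon/4}$ with room to spare; the exponent $\rho^{-\epsilon}$ and the constant $C_2/(8\pi)$ are not the main obstacle, since the hypothesis builds in the factor $4$ and the conclusion gives away an extra factor relative to $w/(2\pi)$.

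The genuine gap is your second step, the passage from the averaged identity $w(\rho,\theta)=\int_0^{2\pi}u_\theta(\rho,\theta+s)\,ds$ to a pointwise lower bound. The bounds you invoke do not make $u_\theta$ vary slowly over a full turn: $\rho|\Hess_u|\leq\epsilon$ only gives $|u_{\theta\theta}|\lesssim\rho^2|\Hess_u|\leq\epsilon\rho$, and even the Simons-type decay behind \eqref{GradDecay} only improves this to $|u_{\theta\theta}|\lesssim\epsilon\rho^{7/12}$; in either case the admissible oscillation of $u_\theta$ over an interval of length $2\pi$ grows in $\rho$, while the average you control is of size $C_2\rho^{-\epsilon}/(2\pi)$ and decays. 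So averaging plus ``slow variation'' cannot exclude sign changes of $u_\theta$, which is exactly what the proposition asserts; likewise your remark that a sign change would ``violate the separation or the flatness bound'' is not a proof, since the flatness bound permits $|u_\theta|$ as large as $\epsilon\rho$. The proof in \cite{BB1} handles this crux differently: $u$ is approximated, with estimates, by a multi-valued \emph{harmonic} function (the approximation result of Colding--Minicozzi cited as \cite{EXC}, which is what the introduction of the present paper alludes to), such harmonic functions are decomposed so that $u_\theta$ can be compared directly with $w/(2\pi)$ on the one-valued middle sheet, and the positivity of $w$ on the neighboring sheets is used -- this is where embeddedness and the hypothesis that the sheet is $3$-valued, rather than $1$-valued, genuinely enter. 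Your sketch never uses the extra sheets at all, which is a sign that the proposed mechanism is not the one that can work.
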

The second result is a technical lemma that will guarantee that any
sheets lying between sheets associated to consecutive blow-up pairs
are eventually (for large enough radius) $\epsilon$-sheets. Results
along these lines can by found in Section 5 of \cite{MMGPD} and
Section II.3 of \cite{CM3} . Importantly, the proof of such a
statement relies only on standard elliptic theory and the ability to
apply the one-sided curvature estimate in $\mathbf{C}_{\delta_1}(y)$
for an appropriately chosen $\delta_1$, where $y \in \Sigma$ is the
point of a blow-up pair. Lemma \ref{BUPinConeLem} and Corollary
\ref{OscCor} ensure, as long as we work far enough from the genus,
that this last condition is satisfied.  In order to avoid
technicalities, we restrict attention only to pairs
$(\tilde{y}_i,\tilde{s}_i)$ from Proposition \ref{AbvBelBUPprp}.
\begin{lem}
\label{thm211} There exists $\epsilon_0>0$ so: Given $N>4$ and
$\epsilon_0>\epsilon>0$ there exists $R_2=R_2(\epsilon,N)>1$ so that
if, using $\epsilon$, $(\tilde{y}_i,\tilde{s}_i)$ is a blow-up pair from Proposition
\ref{AbvBelBUPprp} with two associated $4$-valued $\epsilon$-sheets
$\Sigma_{j}$, $j=1,2$, then there exist two $N$-valued $\epsilon$-sheets
on scale $R_2\tilde{s}_i$, $\tilde{\Sigma}_j\subset \Sigma$.
Moreover, $\tilde{\Sigma}_j$ may be chosen so its $4$-valued
middle sheet contains $\Sigma_j\backslash \set{(x_1-x_1(\tilde{y}_i))^2+(x_2-x_2(\tilde{y}_i))^2\leq
R_2^2\tilde{s}_i^2}$.
\end{lem}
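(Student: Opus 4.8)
The plan is to enlarge each of the two given $4$-valued sheets $\Sigma_j$ \emph{angularly} to a weak $N'$-valued sheet for a suitably large auxiliary integer $N'=N'(\epsilon,N)$, and then to run the standard elliptic-regularity machinery — exactly as in Section~5 of \cite{MMGPD}, Section~II.3 of \cite{CM3}, and Proposition~2.3 of \cite{BB1} — to promote the central $N$ sheets of this weak sheet, far enough out, to a strong $\epsilon$-sheet. Everything in this outline except one point is purely a matter of citing existing disk-type arguments; the one point where the non-trivial topology enters is that the machinery needs the one-sided curvature estimate to be available in a cone with vertex $\tilde y_i$, and this is precisely why we have restricted to the blow-up pairs furnished by Proposition~\ref{AbvBelBUPprp}, which are by construction far from the genus.

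Concretely, I would proceed as follows. Given $N>4$ and $\epsilon$ small, let $\delta,R$ be as in Lemma~\ref{BUPinConeLem} and feed $\epsilon$ and $\delta$ into Corollary~\ref{OscCor} to obtain $\delta_0>0$ and (after enlarging $R$) its conclusion; since $(\tilde y_i,\tilde s_i)$ comes from Proposition~\ref{AbvBelBUPprp} we have $|\tilde y_i|\ge R$ and $\tilde y_i\notin\mathbf{C}_\delta(0)$, so Corollary~\ref{OscCor} applies in $\mathbf{C}_{\delta_0}(\tilde y_i)\setminus B_{2\tilde s_i}(\tilde y_i)$: every component of $\Sigma$ there is a graph with gradient at most $\epsilon$, hence $\Sigma$ is a union of multi-valued graphs one of which has $\Sigma_j$ (which lies in this cone by its construction in Proposition~\ref{AbvBelBUPprp}) as a sub-graph. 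Next, applying Proposition~\ref{GraphExtPrp} with $N'$ in place of $N$ — and using Appendix~D of \cite{CM4} together with uniqueness of graphical pieces in the cone to arrange that the resulting sheet is the continuation of $\Sigma_j$ — I obtain weak $N'$-valued sheets $\hat\Sigma_j=\tilde y_i+\Gamma_{\hat u_j}$ on scale $\tilde s_i$ with $\hat u_j=u_j$ on their common domain. Finally, the elliptic argument of \cite{MMGPD,CM3} applied to $\hat u_j$ — Schauder theory for \eqref{MinSurfEq} together with the Simons-type estimate of Corollary~2.3 of \cite{MMGPD} — produces, for an appropriate $R_2=R_2(\epsilon,N)$, the flatness bound \eqref{MainInEq21} and the normalization $\lim_{\rho\to\infty}\nabla\hat u_j(\rho,0)=0$ on the central $N$-valued subgraph over radii $\ge R_2\tilde s_i$; writing $\tilde u_j$ for this restriction, the surfaces $\tilde\Sigma_j=\tilde y_i+\Gamma_{\tilde u_j}$ are $N$-valued $\epsilon$-sheets on scale $R_2\tilde s_i$ centered at $\tilde y_i$. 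Because $\tilde u_j=\hat u_j=u_j$ wherever all three are defined, the $4$-valued middle sheet of $\tilde\Sigma_j$ contains $\Sigma_j\setminus\{(x_1-x_1(\tilde y_i))^2+(x_2-x_2(\tilde y_i))^2\le R_2^2\tilde s_i^2\}$, as required.

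I expect the main obstacle to be the two middle steps — producing the weak $N'$-valued extensions and certifying that the one-sided curvature estimate genuinely applies in $\mathbf{C}_{\delta_0}(\tilde y_i)$ — since away from embedded disks neither step is automatic and the genus of $\Sigma$ is exactly what can break them. Once Lemma~\ref{BUPinConeLem} and Corollary~\ref{OscCor} have placed us inside a cone with vertex $\tilde y_i$ that misses the genus, the disk-type results of \cite{CM2,CM3,CM4} and the standard elliptic estimates of \cite{MMGPD} carry over verbatim, and the remainder is bookkeeping — chiefly tracking how many sheets and how much of the inner radius are consumed when a weak sheet is converted into a strong one, which is what produces the constant $R_2(\epsilon,N)$.
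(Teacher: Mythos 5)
Your overall strategy is the one the paper takes: the only genuinely new point beyond the disk case is the availability of the one-sided curvature estimate in a cone with vertex $\tilde y_i$, which Lemma \ref{BUPinConeLem} and Corollary \ref{OscCor} supply precisely because the pairs come from Proposition \ref{AbvBelBUPprp}; the remainder is the elliptic machinery of \cite{MMGPD} and Section II.3 of \cite{CM3} as packaged in Lemma 4.1 of \cite{BB1}, which is exactly what the paper cites. However, your middle step contains a genuine gap. You invoke Proposition \ref{GraphExtPrp} with $N'$ in place of $N$ to produce a weak $N'$-valued sheet centered at $\tilde y_i$ \emph{on the original scale} $\tilde s_i$. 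The constant $C_1$ in Proposition \ref{GraphExtPrp} depends on the number of sheets requested, whereas the pairs $(\tilde y_i,\tilde s_i)$ are $C_1$ blow-up pairs for a constant fixed in Proposition \ref{AbvBelBUPprp} depending only on $\epsilon$ --- i.e.\ fixed before $N$ is given, as the quantifier structure of the lemma demands (only $R_2$ may depend on $N$). So the hypothesis of Proposition \ref{GraphExtPrp} for $N'$ is simply not available. Worse, the intermediate conclusion is false in general: by Theorem \ref{TwoInitShtExstThm} the separation of $\Sigma_j$ over $\partial D_{\tilde s_i}(\Pi(\tilde y_i))$ is bounded below by $C_2\tilde s_i$, while $\mathbf{C}_{\epsilon}(\tilde y_i)$ has vertical extent only about $2\epsilon\tilde s_i$ at that radius, so only a bounded number (independent of $N$) of turns of the graph containing $\Sigma_j$ can lie in the cone near the inner radius; for $N'$ large, a weak $N'$-valued sheet centered at $\tilde y_i$ on scale $\tilde s_i$ need not exist. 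This is the helicoid picture: with a fixed blow-up constant, more sheets are gained only by moving to larger radii, which is precisely where the factor $R_2(\epsilon,N)$ in the statement comes from.

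The repair is to collect the extra sheets far out rather than at the blow-up scale: by Corollary \ref{OscCor} every component of $\Sigma\cap\left(\mathbf{C}_{\delta_0}(\tilde y_i)\backslash B_{2\tilde s_i}(\tilde y_i)\right)$ is a graph with small gradient, and by the global structure of $\Sigma$ (Appendix D of \cite{CM4}, cf.\ Proposition \ref{GraphNrConePrp}) the surface away from the genus consists of two $\infty$-valued graphs spiraling together; hence for $\rho\geq R'\tilde s_i$ with $R'=R'(\epsilon,N)$ large, sufficiently many consecutive sheets adjacent to $\Sigma_j$ lie inside the cone and form a weak multi-valued graph extending $\Sigma_j$. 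Only then does your final step --- Schauder theory for \eqref{MinSurfEq} together with the Simons-type decay of Corollary 2.3 of \cite{MMGPD}, as in Proposition 2.3 of \cite{BB1} --- apply to give \eqref{MainInEq21} and the normalization at infinity on the central $N$ sheets over $\rho\geq R_2\tilde s_i$, and the containment of $\Sigma_j\backslash\set{(x_1-x_1(\tilde y_i))^2+(x_2-x_2(\tilde y_i))^2\leq R_2^2\tilde s_i^2}$ in the middle sheet follows as you say. With this replacement your argument coincides with the proof of Lemma 4.1 of \cite{BB1} that the paper invokes, together with the observation (which you did make correctly) that the choice of pairs from Proposition \ref{AbvBelBUPprp} legitimizes the one-sided curvature estimate near $\tilde y_i$.
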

This is essentially Lemma 4.1 of \cite{BB1}, though the statement
there is technically simpler. As before, the only obstruction to
generalizing the proof from \cite{BB1} is the possibility that we
cannot apply the one-sided curvature estimates in
$\mathbf{C}_{\delta}(\tilde{y}_i)$ for some small $\delta$. However,
our choice of $\tilde{y}_i$ ensures this is not a problem.

We now wish to argue as in Lemma 4.3 of \cite{BB1} (where we
determine the regions $\mathcal{R}_A$ and $\mathcal{R}_S$ for
disks). To do so we must ensure that we may use the chord-arc bounds
of \cite{CY} near the pairs $(\tilde{y}_i,\tilde{s}_i)$.  By
choosing a subsequence of blow-up pairs $(y_i,s_i)$ that satisfy
this additional criteria, we obtain the following:

\begin{lem}\label{genusStructure}
There exist constants $C_1,R_0,R_1$ and a sequence $(y_i,s_i)$
($i\neq 0$) of $C_1$ blow-up pairs of $\Sigma$ so that:
$x_3(y_i)<x_3(y_{i+1})$ and for $i\geq 1$, $y_{i+1}\in B_{R_1 s_i}
(y_i)$ while for $i\leq -1$, $y_{i-1}\in B_{R_1 s_i}(y_i)$.
Moreover, setting $\tilde{\mathcal{R}}_A=\tilde{\mathcal{R}}_A^+\cup
\tilde{\mathcal{R}}_A^-$, where $\tilde{\mathcal{R}}_A^\pm$ is the
component of $\bigcup_{\pm i>0} \Sigma\cap B_{R_1s_i}(y_i)$
containing $y_{\pm 1}$, then $\Sigma \backslash
\left(\tilde{\mathcal{R}}_A\cup B_{R_0}\right)$ has exactly two
unbounded components $\Sigma^1$ and $\Sigma^2$, each of which are
strictly spiraling multi-valued graphs. We define the set
$\tilde{\mathcal{R}}_S=\Sigma^1\cup \Sigma^2$.
\end{lem}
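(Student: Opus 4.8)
The plan is to extract a suitable subsequence of the blow-up pairs produced by Proposition \ref{AbvBelBUPprp} and then run the argument of Lemma 4.3 of \cite{BB1} essentially verbatim. First I would start with the sequence $(\tilde{y}_i,\tilde{s}_i)$ from Proposition \ref{AbvBelBUPprp}, which already gives $C_1$ blow-up pairs with $x_3(\tilde y_i)<x_3(\tilde y_{i+1})$, consecutive containment $\tilde y_{i+1}\in B_{C_{in}\tilde s_i}(\tilde y_i)$ (for $i\geq 1$), and two associated $\epsilon$-sheets. The only extra property needed to invoke the chord-arc bounds of \cite{CY} near each pair is that the topology of $\Sigma$ be trivial in a ball of a fixed large multiple of the scale around $\tilde y_i$; by Corollary \ref{ConditiononBUPsTHM} and Proposition \ref{GlobTopProp} (the component of $B_{C s}(y)\cap\Sigma$ containing $y$ is a disk once $|y|$ is large relative to $s$, which holds for all but finitely many $i$), this fails for at most finitely many indices. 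Discard those, reindex, and call the result $(y_i,s_i)$; set $C_1$ as before and let $R_1 = C_{in}$, so the containment $y_{i+1}\in B_{R_1 s_i}(y_i)$ (resp. $y_{i-1}\in B_{R_1 s_i}(y_i)$ for $i\leq -1$) is inherited.

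Next I would apply Lemma \ref{thm211} at each $(y_i,s_i)$: with $N$ fixed large and $\epsilon<\epsilon_0$, this upgrades the two $4$-valued $\epsilon$-sheets to $N$-valued $\epsilon$-sheets $\tilde\Sigma_j^{(i)}$ on scale $R_2 s_i$, whose $4$-valued middle sheets contain $\Sigma_j^{(i)}$ outside the cylinder of radius $R_2 s_i$. Using Proposition \ref{uThetaLowBndPrp} together with the lower bound $C_2 s_i$ on the separation over $\partial D_{s_i}(\Pi(y_i))$ from Theorem \ref{TwoInitShtExstThm}, each such sheet strictly spirals on $\pRect{C_3 R_2 s_i}{\infty}{-\pi}{\pi}$. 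The point, exactly as in \cite{BB1}, is that the chord-arc bounds of \cite{CY} force consecutive sheets to overlap: the $N$-valued sheets associated to $(y_i,s_i)$ and to $(y_{i+1},s_{i+1})$ share sub-sheets once $N$ is chosen large enough relative to $R_1, R_2, C_{in}$, because $y_{i+1}$ lies within $B_{R_1 s_i}(y_i)$ and the scales are comparable (again Corollary \ref{ConditiononBUPsTHM} controls $s_{i+1}/s_i$). Hence the union over all $i\geq 1$ of these middle sheets, minus the "core" balls $B_{R_1 s_i}(y_i)$, patches together into a single strictly spiraling multi-valued graph; there are exactly two such, $\Sigma^1$ and $\Sigma^2$, since each blow-up pair contributes exactly two oppositely-oriented blow-up sheets (Theorem \ref{TwoInitShtExstThm}).

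It then remains to choose $R_0$ large enough that $B_{R_0}$ contains the genus $\overline\Sigma$ and the finitely many discarded pairs, and to verify that $\Sigma\setminus(\tilde{\mathcal R}_A\cup B_{R_0})$ has \emph{exactly} two unbounded components. One inclusion is the construction above: $\Sigma^1\cup\Sigma^2$ is contained in this set and is unbounded. For the reverse, any other component would have to avoid all the cylinders $B_{R_1 s_i}(y_i)$ and all blow-up sheets; by Theorem \ref{TwoInitShtExstThm} and the one-sided curvature estimate (applicable away from the genus by Lemma \ref{BUPinConeLem} and Corollary \ref{OscCor}), the region between the sheets of each $\Sigma_1^{(i)}$ is accounted for by $\Sigma_2^{(i)}$, so there is no room for a third unbounded piece — this is precisely the argument structuring the proof of Lemma 4.3 of \cite{BB1}. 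I expect the main obstacle to be bookkeeping the index ranges and the mutual overlap of consecutive $N$-valued sheets: one must choose the constants in the order $\epsilon \to N \to R_2 \to (C_1, C_{in}) \to R_1 \to R_0$ so that the overlap is genuine and the "middle sheets" genuinely glue, while simultaneously ensuring all the auxiliary results (chord-arc bounds, one-sided curvature estimate, Corollary \ref{OscCor}) apply at every pair $(y_i,s_i)$ — which is guaranteed only because we have restricted to pairs far from the genus.
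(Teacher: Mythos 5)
Your overall strategy matches the paper's (pass to a subsequence of the pairs from Proposition \ref{AbvBelBUPprp} chosen far enough from the genus that the chord-arc bounds of \cite{CY} apply, then use Lemma \ref{thm211} and Proposition \ref{uThetaLowBndPrp} to get strict spiraling and glue), but there is a genuine gap at the point where you set $R_1=C_{in}$. The balls $B_{R_1 s_i}(y_i)$ are not merely supposed to contain the next blow-up point; they must swallow the \emph{non-spiraling} inner portions of all the sheets lying between consecutive blow-up sheets. As you yourself note, Proposition \ref{uThetaLowBndPrp} only gives $u_\theta\neq 0$ outside a cylinder of radius $C_3R_2 s_i$ about the vertical line through $y_i$, and $C_3R_2$ is in general far larger than $C_{in}$ ($C_3$ depends on the separation lower bound $\tilde{C}_2$ obtained by integrating \eqref{MainInEq21} across the at most $N$ intermediate sheets, and $R_2$ comes from Lemma \ref{thm211}). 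With $R_1=C_{in}$ the complement $\Sigma\backslash(\tilde{\mathcal{R}}_A\cup B_{R_0})$ still contains the annular pieces of the intermediate sheets between radii roughly $C_{in}s_i$ and $C_3R_2 s_i$, where strict spiraling is not guaranteed, so the conclusion that the two unbounded components are strictly spiraling multi-valued graphs does not follow. In the paper $R_1$ is chosen \emph{last}, depending on $C_{in},N,\epsilon,C_3,\beta,R_2$, so that the component of $B_{R_1 s_i}(y_i)\cap\Sigma$ containing $y_i$ contains both $y_{i+1}$ and the intersection of the cylinder of radius $C_3R_2 s_i$ with each intermediate sheet; it is exactly here that the chord-arc bound is used (to convert extrinsic closeness of these pieces into membership in the same component of the ball), not merely to make "consecutive sheets overlap."

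Two related points need tightening. First, the uniform number $N$ of sheets between consecutive blow-up sheets is not something you may "fix large"; it must be \emph{derived} (as in Lemma 4.2 of \cite{BB1}) from the curvature bound of Proposition \ref{genuscbprop} together with the chord-arc bounds, and — since $R_1$ depends on $N$ through $\tilde{C}_2$ and $C_3$ — one must check $N$ does not depend on $R_1$, which is why the paper selects the subsequence by a height threshold $h_2$ obtained from Corollary \ref{ConditiononBUPsTHM} with $\alpha^{-1}\geq \max\set{C_{bnd},2\beta(R_1+1)}$ and observes that $N$ is independent of the ultimate choice of $R_1$. Your "discard finitely many indices" step quietly assumes the disk condition at a \emph{fixed} multiple of the scale, but the multiple needed is $2\beta(R_1+1)$, which is only available after this circularity is addressed; note also that your closing constant ordering ($R_2\to(C_1,C_{in})\to R_1$) contradicts your earlier definition $R_1=C_{in}$. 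Second, Corollary \ref{ConditiononBUPsTHM} bounds $s_i$ relative to $|y_i|$, not the ratio $s_{i+1}/s_i$; comparability of consecutive scales comes from the curvature bound near $y_i$, not from that corollary.
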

\begin{proof}
Fix $\epsilon < \epsilon_0$ where $\epsilon_0$ is smaller than the
constants given by Proposition \ref{uThetaLowBndPrp} and Lemma
\ref{thm211}.  Using this $\epsilon$, let
$(\tilde{y}_i,\tilde{s}_i)$ be the sequence constructed in Lemma
\ref{AbvBelBUPprp}.  Let us now determine how to choose the sequence
$(y_i,s_i)$.

On $(y_i,s_i)$, we will need a uniform bound, $N$, on the number of
sheets between the blow-up sheets associated to the pairs $(y_i,
s_i)$ and $(y_{i+1},s_{i+1})$. This is equivalent to a uniform area
bound which in turn follows from the uniform curvature bounds of
Proposition \ref{genuscbprop} of the appendix, once we can establish
the appropriate chord-arc bounds. The proof of this is
straightforward and can be found in Lemma 4.2 of \cite{BB1}. Recall, from \cite{CY},
that the (strong) chord-arc bounds for minimal disks give a uniform constant
$\beta>1$ so for any $r$, if the component of $B_{2(r+1) \beta
s_i}(y_i)\cap\Sigma$ containing $y_i$ is a disk, then
 $B_{r s_i}(y_i)\cap \Sigma$ is a subset of
$\mathcal{B}_{(r+1) \beta s_i}(y_i)$. To correctly apply the argument of
Lemma 4.2 in \cite{BB1}, one must be sufficiently far from the genus;
i.e. for a fixed constant $C_{bnd}$, the component of
$B_{C_{bnd}s_i} (y_i)\cap \Sigma$ containing $y_i$ must be a disk.
Note that $C_{bnd}$ depends only on $\beta$ and $C_{in}$ (where
$C_{in}$ is as in Proposition \ref{AbvBelBUPprp}). To that end, pick
$h_2\geq 0$ by using Corollary \ref{ConditiononBUPsTHM} with
$\alpha^{-1} \geq \max\set{C_{bnd},2\beta (R_1+1)}$ where $R_1$ is to be
chosen later. We then pick the sequence $(y_i,s_i)$ from
$(\tilde{y}_i,\tilde{s}_i)$ by requiring $|x_3(y_i)|\geq h_2$ (and
then relabeling).  Notice that our method of choosing the $(y_i,s_i)$
ensures that $N$ is independent of our ultimate choice of $R_1$.

We now determine $R_1$ (see Figure \ref{geomlem}). By choice of
$(y_i,s_i)$, we can apply Lemma \ref{thm211}, so there exists an
$R_2$ such that all of the (at most) $N$ sheets between the blow-up
sheets associated to $(y_1,s_1)$ and $(y_2,s_2)$ are
$\epsilon$-sheets on scale $R_2 s_1$ centered on the line $\ell$
which goes through $y_1$ and is parallel to the $x_3$-axis.  Label
these pairs of $\epsilon$-sheets $\Sigma^k_j$, $k=1,2$ and $1 \leq j
\leq N$. Integrating \eqref{MainInEq21}, and using $N$ and $C_2$ we
get $\tilde{C}_2$ so $\tilde{C}_2 s_1$ is a lower bound on the
separation of each $\Sigma^k_j$ over the circle $\partial D_{R_2
s_1} (\Pi(y_{1}))\subset\set{x_3=0}$. Theorem \ref{uThetaLowBnd}
gives a $C_3$, depending on $\tilde{C}_2$, such that outside of a
cylinder centered at $\ell$ of radius $R_2C_3 s_1$, all the
$\Sigma_j^k$ strictly spiral. Choose $R_1$, depending only on
$C_{in}, N, \epsilon, C_3, \beta$ and $R_2$, so the component of
$B_{R_1 s_1}(y_1)\cap \Sigma$ containing $y_1$ also contains the
point $y_2$ and the intersection of this cylinder with each
$\Sigma^k_j$. This $R_1$ exists by the chord-arc bounds which we
have by the choice of $(y_i,s_i)$. As there was nothing special
about the blow-up pair $(y_1,s_1)$ in this argument and our
conclusions are invariant under a rescaling, we can apply the same
argument to each $(y_i,s_i)$ and thus construct
$\tilde{\mathcal{R}}_A$.
\begin{figure}
 \includegraphics[width=3in]{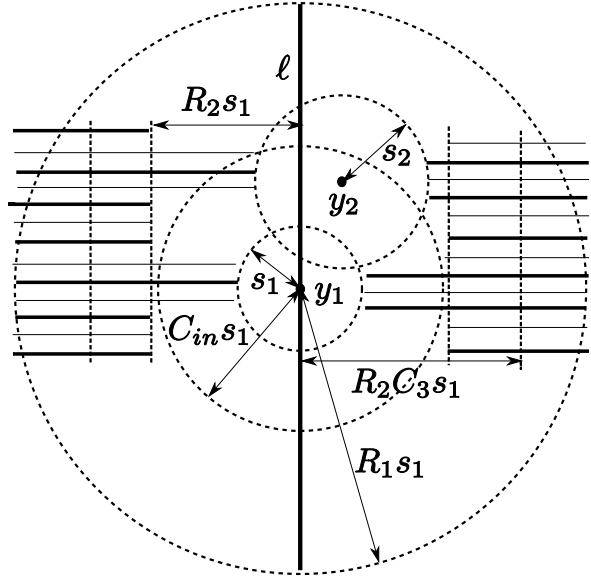}
\caption{An illustration of the proof of Lemma
\ref{genusStructure}.} \label{geomlem}
\end{figure}

Finally, by properness, there exists a finite number, $M$, of
$\epsilon$-sheets between the blow-up sheets associated to $(y_{\pm
1},s_{\pm 1})$. Pick $R_0$ large enough so that outside of the ball
of radius $R_0$ the $M$ sheets between the blow-up sheets associated
to $(y_1,s_1)$ and $(y_{-1},s_{-1})$ strictly spiral.  Such an $R_0$
exists by Proposition \ref{GraphNrConePrp}, Theorem \ref{thm211},
and the above argument.  By the above construction,  the $\Sigma^i$ are strictly spiraling
multi-valued graphs as described in Remark \ref{MultiGraphRem}.
\end{proof}

Notice that $\Sigma$ is not necessarily contained in
$\tilde{\mathcal{R}}_A \cup \tilde{\mathcal{R}}_S \cup B_{R_0}$. In
the next section, we will adjust these subsets in order to obtain
the decomposition.

\subsection{Decomposing $\Sigma$}\label{DecSec}The strict spiraling, the
fact that away from the genus convex sets meet $\Sigma$ in disks
(see Lemma \ref{GlobTopProp}) and the proof of Rado's theorem (see
\cite{OSS, RadoThm}) will give $\nabla_\Sigma x_3\neq 0$ in
$\mathcal{R}_A$. Then a Harnack inequality will allow us to bound
$|\nabla_\Sigma x_3|$ from below on $\mathcal{R}_A$.
We first use the strict spiraling on $\tilde{\mathcal{R}}_S$ and an
appropriate initial choice for $\mathcal{R}_G$ to determine the
behavior of the level sets of $x_3$.

\begin{proof}(Proposition
\ref{SecondMainThm}) By the properness of $\Sigma$ there exists an
$R_0'\geq R_0$, where $R_0$ is from Lemma \ref{genusStructure}, so
that the component of $B_{R_0'}\cap \Sigma$ containing
$\overline{\Sigma}$ contains $B_{R_0}\cap \Sigma$.  We take
$\mathcal{R}_G$ to be this component and define
$\Gamma=\Sigma\backslash \mathcal{R}_G$; note that $\partial
\mathcal{R}_G$ is connected by Proposition \ref{GlobTopProp}. By
increasing $R_0'$, if needed, we may assume that $\set{|x_3|\leq
2}\cap \partial \mathcal{R}_G \subset \tilde{\mathcal{R}}_S$. Notice
this implies that $\Gamma\cap \set{|x_3|\leq 2}\subset
\tilde{\mathcal{R}}_S=\Sigma^1 \cup \Sigma^2$ and so $\nabla x_3\neq
0$ in this set. Moreover, the strict spiraling on $\Sigma^1 \cup
\Sigma^2$ guarantees that $\Gamma\cap \set{x_3=c}$, for $|c| \leq
2$, consists of exactly two unbounded, smooth curves with boundary
on $\partial \Gamma$.

For $\set{|x_3|\geq 2}$ we now show that every level set $\{x_3 =
c\} \cap \Sigma$ consists of one smooth properly embedded curve.  We
use that $x_3$ is harmonic on $\Sigma$, the strict spiraling in
$\tilde{\mathcal{R}}_S$ and the proof of Rado's theorem.  The key
fact is that a non-constant harmonic function $h$ on a closed disk
has an interior critical point, $p$, if and only if the
 connected component of the level set $\set{h=h(p)}$ containing $p$ meets the boundary of the disk
in at least 4 points. For $|x_3|>1$, as the genus lies in $B_1$,
the intersection of $\Sigma$ with wide, short cylinders with axis
the $x_3$-axis are disks by the maximum principle and Proposition \ref{GlobTopProp}. Moreover, every
level set of $x_3$ can only have two ends by the strict spiraling.
The proof of Rado's theorem then immediately gives the non-vanishing
of the gradient for $|x_3|>1$ and so $\nabla_\Sigma x_3\neq 0$
in $|x_3|> 1$. In particular, $\set{x_3=c}\cap \Sigma$ is a
smooth curve for $|c|>1$.  The final statement of the proposition is then clear.
\end{proof}

In order to show Theorem \ref{FirstMainThm}, we need only
construct $\mathcal{R}_A$ and $\mathcal{R}_S$ from  the $\tilde{\mathcal{R}}_A$ and $\tilde{\mathcal{R}}_S$ of Lemma
\ref{genusStructure} and verify the lower bound on $|\nabla_\Sigma
x_3|$.
\begin{proof}(Theorem \ref{FirstMainThm}) We first verify that $|\nabla_\Sigma x_3|$ is bounded below on $\tilde{\mathcal{R}}_A$.
Suppose that $(y,s)$ is a blow-up pair in the sequence constructed
in Lemma \ref{genusStructure} and for convenience rescale so that
$s=1$. By our choice of blow-up pairs the constant $\beta$, we know
that every component of $B_{2\beta R_1}(y)\cap \Sigma$ is a disk
(where $R_1, \beta$ are from Lemma \ref{genusStructure}). Thus, the
component of $B_{R_1}(y) \cap \Sigma$ containing $y$ is contained in
$\mathcal{B}_{\beta {R}_1}(y) \subset B_{2\beta {R}_1}(y)\cap
\Sigma$.

Proposition \ref{genuscbprop} implies that curvature is bounded in
$B_{2 \beta {R}_1}(y) \cap \Sigma$ by some $K=K({R}_1)$. The
function $v=-2 \log |\nabla_\Sigma x_3| \geq 0$ is smooth by
Proposition \ref{SecondMainThm} and because, by construction,
$B_{2\beta {R}_1}(y) \cap \set{|x_3|\leq 1}=\emptyset$. Standard
computations give $\Delta_\Sigma v=|A|^2$. Then, since
$|\nabla_\Sigma x_3|=1$ somewhere in the component of $B_{
{R}_1} (y)\cap \Sigma$ containing $y$, we can apply a Harnack
inequality (see Theorems 9.20 and 9.22 in \cite{GiTr}) to obtain an
upper bound for $v$ on $\mathcal{B}_{\beta {R}_1}(y)$ that depends
only on
 $K$. Consequently,
there is a lower bound $\epsilon_1$ on $|\nabla_\Sigma x_3|$ in
the component of $\Sigma \cap B_{R_1}(y)$ containing $y$.  Since
this bound is scaling invariant, the same bound holds around any
blow-up pair from Lemma \ref{genusStructure}.

Recall, $\mathcal{R}_G$ is given by $\Sigma\backslash \Gamma$ where $\Gamma$ is from
Proposition \ref{SecondMainThm}.
Suppose $\Omega$ is a component of
$\Sigma\backslash ( \mathcal{R}_G\cup \tilde{\mathcal{R}}_A)$. By the construction of Lemma \ref{genusStructure}, $\Omega$ is either bounded or a subset of $\tilde{\mathcal{R}}_S$.  We need consider only bounded $\Omega$.  Notice
$\partial\Omega\subset
\partial (\mathcal{R}_G\cup \tilde{\mathcal{R}}_{A})\subset\partial \mathcal{R}_G \cup
\partial\tilde{\mathcal{R}}_A$. As $\partial\mathcal{R}_G$ is compact, and,
by construction, $\nabla_\Sigma x_3\neq 0$ on it, there exists some
 $\epsilon_2>0$ such that $|\nabla_\Sigma x_3| \geq \epsilon_2>0$ on
 $\partial \mathcal{R}_G$.  Let $\epsilon_0 =
 \min\{\epsilon_1,\epsilon_2\}$. Since $v$ is subharmonic, $|\nabla_\Sigma x_3|\geq \epsilon_0$ on
 $\Omega$.  Thus, define $\mathcal{R}_A$ to be the union of all these $\Omega$ with $\tilde{\mathcal{R}}_A\backslash \mathcal{R}_G$.  Set $\mathcal{R}_S=\Sigma\backslash \left(\mathcal{R}_A\cup \mathcal{R}_G\right)\subset \tilde{\mathcal{R}}_S$.
\end{proof}

\section{Conformal Structure of the End}
In Section \ref{pfsec} we prove Theorem \ref{ThirdMainThm} and
Corollary \ref{ThirdMainThmCor} by analysis similar to that in
Section 5 of \cite{BB1}.  We first show that
$\Gamma=\Sigma\backslash \mathcal{R}_G$ is conformally a punctured
disk and, indeed, the map $z=x_3+ix_3^*: \Gamma \to \mathbb{C}$ is a
proper, holomorphic coordinate. We then study the level sets. Recall
we let $x_3^*$ denote the harmonic conjugate of $x_3$. In order to show that
$z$ is a proper, holomorphic coordinate, one must check three
things: that $z$ is well defined, that it is injective and that it
is proper -- i.e. if $p\to \infty$ in $\Gamma$ then $z(p)\to
\infty$.  The first two statements are straightforward, whereas the
latter is far more subtle.
\begin{prop}
$z:\Gamma\to \mathbb{C}$ is a holomorphic coordinate.
\end{prop}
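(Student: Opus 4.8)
The plan is to verify the three properties the paper itself singles out: that $z = x_3 + i x_3^*$ is well defined, injective, and a holomorphic immersion on $\Gamma$. For well-definedness, I would invoke Stokes' theorem: $x_3$ is harmonic on the minimal surface $\Sigma$, hence $\ast dx_3$ is a closed one-form on $\Gamma = \Sigma \setminus \mathcal{R}_G$; since $\Gamma$ is an annular neighborhood of the one end of $\Sigma$ (by Proposition \ref{SecondMainThm}, $\Sigma \setminus \Gamma$ is compact and $\partial \Gamma$ is connected), its first homology is generated by $\partial \Gamma$, and the period of $\ast dx_3$ over $\partial \Gamma$ vanishes because $\Sigma$ has only one end — the flux of $\nabla_\Sigma x_3$ through any closed curve homologous to the end must be zero by a second application of Stokes on the compact piece together with the divergence theorem. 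Hence $x_3^*$ exists as a single-valued harmonic function on $\Gamma$ and $z$ is holomorphic there.

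Next I would check that $z$ is a holomorphic coordinate in the sense of being an immersion, i.e. $dz \neq 0$ everywhere on $\Gamma$. This is immediate from Proposition \ref{SecondMainThm}: there $\nabla_\Sigma x_3 \neq 0$ on all of $\Gamma$, and since $|dz|^2 = |dx_3|^2 + |dx_3^*|^2 = 2|\nabla_\Sigma x_3|^2$ (the harmonic conjugate has the same gradient norm), $dz$ never vanishes. So $z$ is a local biholomorphism.

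For injectivity, I would argue via the level-set structure also furnished by Proposition \ref{SecondMainThm}. Write $z = x_3 + i x_3^*$; two points $p, q \in \Gamma$ with $z(p) = z(q)$ in particular have $x_3(p) = x_3(q) = c$, so both lie on the level set $\Gamma \cap \{x_3 = c\}$, which by Proposition \ref{SecondMainThm} is a finite union of properly embedded arcs. On each such arc, $x_3^*$ is strictly monotone: along a level curve of $x_3$, the derivative of $x_3^*$ is $\pm|\nabla_\Sigma x_3| \neq 0$, so $x_3^*$ is a strictly monotone parameter and cannot repeat a value on a single connected component. Thus $p$ and $q$ must lie on distinct components of the level set. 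Here I would use the strict spiraling in $\mathcal{R}_S$ and the structure statement: for $|c|$ large the level set is a single curve so there is nothing to prove, and for the remaining bounded range of $c$ one uses that the two unbounded components sit in the oppositely-oriented spiraling graphs $\Sigma^1, \Sigma^2$, on which $x_3^*$ (being essentially the multivalued-graph angle parameter $\theta$, up to bounded terms) runs in opposite directions and over disjoint ranges once we are far enough out — so equal values of $(x_3, x_3^*)$ on the two sheets are excluded; the finitely many compact level components are handled by monotonicity of $x_3^*$ on each together with a maximum-principle/winding argument ruling out coincidences.

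The step I expect to be the genuine obstacle is injectivity on the bounded part of the $x_3$-range, where the level set of $x_3$ can genuinely have several components (two unbounded ones plus finitely many arcs with both endpoints on $\partial\Gamma$); ruling out that $x_3^*$ takes a common value across two different components requires carefully exploiting the oppositely-oriented strict spiraling of $\Sigma^1$ and $\Sigma^2$ rather than any soft argument — essentially one must track the harmonic conjugate's behavior sheet-by-sheet, as in Section 5 of \cite{BB1}. Well-definedness and the immersion property, by contrast, are immediate consequences of the one-end hypothesis and Proposition \ref{SecondMainThm} respectively.
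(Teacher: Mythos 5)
Your treatment of well-definedness (Stokes' theorem applied to the bounded component of $\Sigma\backslash\nu$, which exists by Proposition \ref{GlobTopProp}) and of the immersion property ($dz\neq 0$ because $\nabla_\Sigma x_3\neq 0$ on $\Gamma$) matches the paper and is fine. The problem is exactly where you flag it: injectivity when $\Gamma\cap\set{x_3=c}$ has more than one component. Your proposed remedies --- that $x_3^*$ runs over ``disjoint ranges'' on $\Sigma^1$ and $\Sigma^2$ once far enough out, or a ``maximum-principle/winding argument'' for the compact arcs --- are not arguments: nothing in your set-up prevents the two oppositely spiraling unbounded arcs, or a compact arc and an unbounded one, from sharing a value of $x_3^*$, and the disjoint-range claim is never established. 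As written, the injectivity step is a genuine gap.

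The missing ingredient is the final clause of Proposition \ref{SecondMainThm}, which you do not use: if $c$ is a regular value of $x_3$, then \emph{all} of $\Gamma\cap\set{x_3=c}$ lies in the single unbounded component $\gamma$ of $\Sigma\cap\set{x_3=c}$. Along this one connected curve every point is a regular point, so ${}^*dx_3$ pulled back to $\gamma$ never vanishes and $x_3^*$ is strictly monotone there (its periods over closed curves in $\Sigma$ vanish by the same Stokes argument you used for well-definedness, so the monotonicity makes sense even where $\gamma$ leaves $\Gamma$). Hence no cross-component comparison is ever needed at regular values: the several arcs of $\Gamma\cap\set{x_3=c}$ are pieces of the same curve in $\Sigma$, and distinct points on it have distinct $x_3^*$. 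Critical values (whose critical points lie in $\mathcal{R}_G$, not in $\Gamma$) are then handled by approximation, as in the paper: given $p\neq q\in\Gamma$ with $x_3(p)=x_3(q)$ a critical value, choose nearby $p',q'$ on a common regular level $c'$; local injectivity of $z$ near $p$ together with the monotonicity of $x_3^*$ along the unbounded curve containing $p'$ and $q'$ gives a lower bound on $|x_3^*(p')-x_3^*(q')|$ independent of $p',q'$, and continuity yields $z(p)\neq z(q)$. Replacing your sheet-by-sheet tracking with this observation closes the gap; without some substitute for that clause of Proposition \ref{SecondMainThm}, the proof is incomplete.
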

\begin{proof}
We first check $x_3^*$ is well defined on $\Gamma$.  As $\Sigma$ is minimal, ${}^*dx_3$, the conjugate differential to
$dx_3$, exists on $\Sigma$ and is closed and harmonic.  We wish to
show it is exact on $\Gamma$.  To do so, it suffices to show that
for every embedded closed curve $\nu$ in $\Gamma$, we have $\int_\nu {}^*dx_3
= 0$. By Proposition \ref{GlobTopProp}, $\Sigma \backslash \nu$ has
two components, only one of which is bounded. The bounded component,
together with $\nu$, is a manifold with (connected) boundary, and on
this manifold ${}^*dx_3$ is a closed form.  Hence, the result
follows immediately from Stokes' theorem.

We next check $z$ is injective on $\Gamma$.
First notice that, by Proposition \ref{SecondMainThm}, for any regular value $c$ of $x_3$, $\set{x_3=c}$ has exactly one unbounded curve and $x_3^*$ is strictly monotone along this curve.
Now suppose $p,q\in \Gamma$, $p\neq q$ and $x_3(p)=x_3(q)$ is a critical
value of $x_3$.  Note that $p$ and $q$ are regular points of $x_3$ --
as they lie on $\Gamma$ -- and so in a neighborhood of each point
$z$ is injective.  Clearly, there are points $p',q'\in \Gamma$
arbitrarily near $p,q$ so that $x_3(p')=x_3(q')=c'$ is a regular
value of $x_3$. Proposition \ref{SecondMainThm} implies the unbounded component, $\gamma$, of $\set{x_3=c'}\cap \Sigma$ contains $p'$ and $q'$. The fact that $z$ is injective near $p$ and the
monotonicity of $x_3^*$ on $\gamma$ together give  positive
lower bound on $|x_3^*(p')-x_3^*(q')|$ independent of $p',q'$. By continuity, this implies
a positive lower bound on $|x_3^*(p)-x_3^*(q)|$ and so $z(p)\neq z(q)$.
\end{proof}

\subsection{The winding number of the Gauss map}
In order to show that $z$ is proper we use the Gauss map of
$\Sigma$, or, more accurately, we use $g$, its stereographic
projection.  In particular, the logarithm of $g$, in $\Gamma$,
allows one to prove that $z$ is proper by complex analytic methods.
We will make this argument in Section \ref{zPropPrfSec}. However,
before we do so we must check such a logarithm is well-defined.
Notice as $\Gamma$ is an annulus it is not a priori clear that there
exists $f:\Gamma \to \mathbb{C}$ such that $g=e^f$ on $\Gamma$. For
such an $f$ to exist we must show that the (topological) winding
number of $g$ as a map from the annulus $\Gamma$ to the annulus
$\mathbb{S}^2\backslash \set{\pm (0,0,1)}$ is zero.  Because $g$ is
meromorphic in $\Sigma$ and has no poles or zeros in $\Gamma$, this
is equivalent to proving that
 $g$ has an equal number of poles and zeros.
\begin{prop}\label{exf}
Counting multiplicity, $g$ has an equal number of poles and zeros.
\end{prop}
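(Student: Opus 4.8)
The plan is to relate the zeros and poles of $g$ to the total curvature and then to the topology/geometry of the surface near the puncture. The key identity is that for the Weierstrass data $(g,dh)$, the poles and zeros of $g$ occur precisely where the Gauss map is vertical (pointing up or down), and the difference "number of zeros $-$ number of poles" is (up to sign) the degree-type quantity one computes by integrating $d\log g$ around $\partial \Gamma$. So the statement is equivalent to showing the winding number of $g$ around the inner boundary curve $\partial \mathcal{R}_G \subset \Gamma$ is zero, which is in turn equivalent to showing $g$ has no net vertical Gauss map count on $\overline{\Sigma} = \Sigma \setminus \Gamma$.

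**First I would** observe that on $\Gamma$ the function $g$ is nowhere zero and has no poles (Proposition \ref{SecondMainThm} gives $\nabla_\Sigma x_3 \neq 0$ on $\Gamma$, so the Gauss map is never vertical there). Hence every zero and pole of $g$ on $\Sigma$ lies in the compact region $\mathcal{R}_G$, and there are finitely many. Counting multiplicity, write $Z$ for the number of zeros and $P$ for the number of poles. By the argument principle applied to the meromorphic function $g$ on the compact manifold-with-boundary $\mathcal{R}_G$ (whose boundary $\partial \mathcal{R}_G$ is a single curve $\nu$, connected by Proposition \ref{GlobTopProp}),
\begin{equation*}
Z - P = \frac{1}{2\pi i}\int_{\nu} \frac{dg}{g}.
\end{equation*}
**So it suffices to show** this boundary integral vanishes. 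The plan is to deform $\nu$ within $\Gamma$ out toward the end: since $dg/g$ is a closed holomorphic (in particular closed) one-form on all of $\Gamma$ — as $g \neq 0, \infty$ there — Stokes' theorem lets me replace $\nu$ by any homologous curve, in particular a level set $\{x_3 = c\} \cap \Gamma$ for a regular value $c$ with $|c|$ large. By Proposition \ref{SecondMainThm}, for such $c$ this level set (its unbounded part) is a single properly embedded curve lying in the strictly spiraling region $\mathcal{R}_S = \Sigma^1 \cup \Sigma^2$; more precisely it consists of one or two arcs running off to infinity, together with finitely many compact arcs with endpoints on $\partial\Gamma$. I would take instead the full closed curve $\partial \mathcal{R}_G$ pushed slightly outward so that it sits in $\tilde{\mathcal{R}}_S$ where the two multivalued graphs $\Sigma^1, \Sigma^2$ control things.

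**The main obstacle** — and the real content — is evaluating $\frac{1}{2\pi i}\int_\nu dg/g$ on this spiraling curve and showing it is zero, i.e. that $g$ winds zero times around $0 \in \mathbb{C}$ as one traverses $\partial \mathcal{R}_G$. This is exactly the point flagged in the introduction: "the minimality of $\Sigma$ and the strict spiraling in $\mathcal{R}_S$ imply that the winding number of $g$ around the inner boundary of $\Gamma$ is zero." The mechanism is that on each $\epsilon$-sheet $\Sigma^i$, the Gauss map has small vertical component and winds in a controlled, monotone way tied to the angular variable $\theta$; the separation/strict-spiraling condition $u_\theta \neq 0$ forces $g = e^{i\alpha z + \cdots}$-type behavior along the sheet, so that traversing once around a large circle in $\Pi(\Sigma)$ the argument of $g$ changes by a definite amount on $\Sigma^1$ and the opposite amount on $\Sigma^2$ (the two sheets are oppositely oriented — "oppositely oriented strictly spiraling multi-valued graphs" in Theorem \ref{FirstMainThm}), and these cancel. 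Concretely I would: parametrize $\partial \mathcal{R}_G$ (homologously pushed out) as the union of an arc in $\Sigma^1$ and an arc in $\Sigma^2$ plus a negligible piece near $\mathcal{R}_A$; estimate $\arg g$ along each using the gradient bound $|\nabla u^i| \leq \epsilon$ and $\Sigma^i \subset \mathbf{C}_\epsilon$ from the definition of $\epsilon$-sheet, which keeps $g$ away from $0$ and $\infty$ and pins its argument to within a controlled range; and conclude the net winding is an integer of absolute value less than $1$, hence zero. Since $Z - P$ is this winding number, $Z = P$, completing the proof.
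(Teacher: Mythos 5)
Your reduction is fine as far as it goes: all zeros and poles of $g$ lie in the compact piece $\mathcal{R}_G$ (since $\nabla_\Sigma x_3\neq 0$ on $\Gamma$), and by the argument principle the statement is equivalent to the vanishing of the winding number of $g$ along $\partial\mathcal{R}_G$ (or any homologous curve in $\Gamma$). That is exactly how the paper frames it. The gap is in the step where you actually evaluate that winding number. Your mechanism --- that the bounds $|\nabla u^i|\leq \epsilon$ and $\Sigma^i\subset \mathbf{C}_\epsilon$ ``keep $g$ away from $0$ and $\infty$ and pin its argument to within a controlled range,'' so that the net winding has absolute value less than $1$ --- is not correct. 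On an $\epsilon$-sheet the normal is nearly vertical, so $|g|$ is close to $0$ or to $\infty$ (this is precisely what is exploited later via $|\nabla_\Sigma x_3|\leq 2e^{-|f_1|}$), and $\arg g$ is governed by the direction of $\nabla u^i$, which rotates by roughly $2\pi$ on every sheet crossed: already on the helicoid, where $g=e^{i\alpha z}$, the argument of $g$ changes by $2\pi$ per turn along a curve of the type you propose. So along each of your two spiral arcs the argument changes by a large amount proportional to the number of sheets crossed, and the conclusion requires these two large contributions, plus the contribution of the arcs through $\mathcal{R}_A$ (where $|g|\sim 1$ but nothing bounds the rotation of $\arg g$ by less than $\pi$), to cancel to within less than one full turn. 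Nothing in the $\epsilon$-sheet estimates gives that exact cancellation; indeed, knowing that $\arg g$ is so tightly tied to height that the two sides cancel is essentially the asymptotic helicoidal behavior of the Weierstrass data that the paper is in the process of proving, so the argument as sketched is close to circular.

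The missing idea in the paper is a different, essentially topological count. One traps all zeros and poles in a compact cylinder $C_{h,R}$ whose lateral boundary consists of two strictly spiraling arcs meeting each level $\{x_3=c\}$ in exactly two points, and then runs a Morse-theory argument on the level sets of $x_3$: at a nondegenerate critical point a closed level curve is created or destroyed, and whether the normal points up or down (zero versus pole of $g$) is determined by whether the curve surrounds the region $\Omega_1$ or $\Omega_2$ and whether it is created or destroyed; since the top and bottom level sets are single intervals, creations and destructions balance, giving equality of zeros and poles. The general case (degenerate or coincident critical levels) is then handled by slightly rotating $\Real^3$, which replaces $g$ by a M\"obius transform $g_\epsilon$ with simple zeros and poles at distinct heights, and by noting the boundary winding number is unchanged under this small perturbation. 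If you want to salvage your approach, you need to replace the ``pinned argument'' estimate by some such counting or continuity argument; the analytic bounds you invoke do not suffice.
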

\begin{proof}
The zeros and poles of $g$ occur only at the critical points of
$x_3$.  In particular, by Proposition \ref{SecondMainThm}, there
exist $h$ and $R$ so all the zeros and poles lie in the
cylinder:
  \begin{equation}
   C_{h,R}=\set{|x_3|\leq h, x_1^2+x_2^2\leq R^2}\cap \Sigma.
  \end{equation}
Moreover, for $R$ and $h$ large, $\gamma=\partial
C_{h,R}$ is the union of four smooth curves, two at the top and
bottom, $\gamma_t$ and $\gamma_b$, and two disjoint helix like
curves $\gamma_1,\gamma_2 \subset \mathcal{R}_S$.  Hence, for
$c\in (-h,h)$, $\set{x_3=c}$ meets $\partial C_{h,R}$ in exactly
two points.  Additionally, as $\gamma_1$ and $\gamma_2$ are
compact, there is a constant $\alpha>0$ so
$|\frac{d}{dt}x_3(\gamma_i(t))|>\alpha$, $i=1,2$.

Let us first suppose that $g$ has only simple zeros and poles and
these occur at distinct values of $x_3$; thus, the Weierstrass
representation implies that the critical points of $x_3$ are
non-degenerate. We now investigate the level sets $\set{x_3=c}$.
By the strict spiraling of $\gamma_i$ ($i=1,2$), at the regular
values these level sets consist of an interval with end points in
$\gamma_i$ ($i=1,2$) and the union of a finite number of closed
curves.  Moreover, by the minimality of $C_{h,R}$, the non-smooth
components of the level sets at critical values will consist of
either two closed curves meeting in a single point or the interval
and a closed curve meeting in a single point.  As a consequence of
this $\set{|x_3|\leq h, x_1^2+x_2^2\leq R^2}\backslash C_{h,R}$
has exactly two connected components $\Omega_1$ and $\Omega_2$.
Orient $C_{h,R}$ by demanding that the normal point into
$\Omega_1$. Notice it is well defined to say if a closed
curve appearing in $\set{x_3=c}\cap C_{h,R}$ surrounds $\Omega_1$
or $\Omega_2$.

\begin{figure}
 \includegraphics[width=3in]{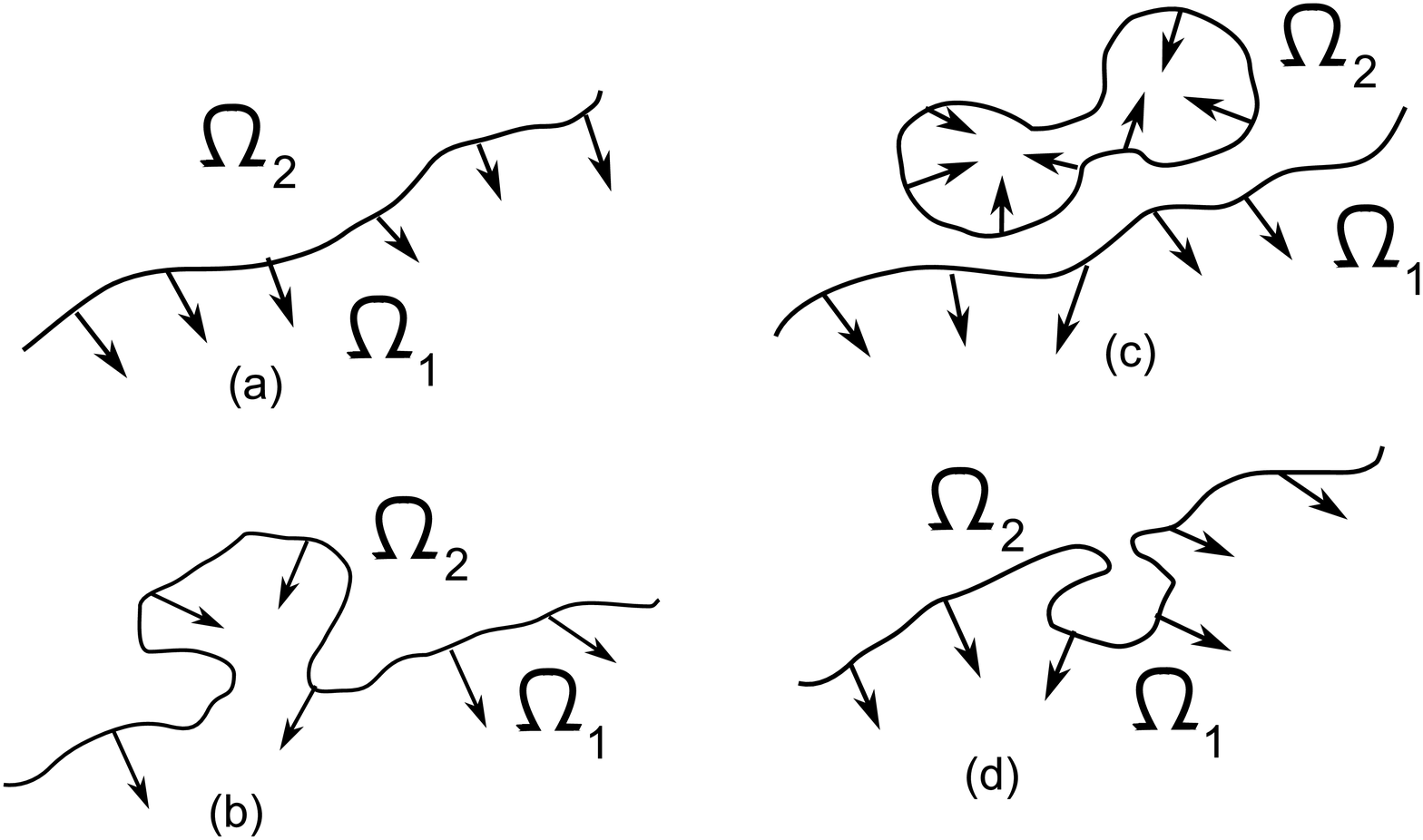}
\caption{Level curve examples in Proposition \ref{exf}. (a)
Initial orientation chosen at height $x_3=h$. (b) A curve pinching
off from $\Omega_1$. (c) Two curves pinching from one. (d) A curve
pinching off from $\Omega_2$.}\label{lvlcurves}
\end{figure}
The restrictions imposed on $g$ and minimality of $C_{h,R}$ imply
that at any critical level, as one goes downward, either a single
closed curve is ``created'' or is ``destroyed''. (See Figure
\ref{lvlcurves}.) Moreover, when such a curve is created it makes
sense to say whether it surrounds $\Omega_1$ or $\Omega_2$ and
this is preserved as one goes downward. Now suppose a closed curve
is created and that it surrounds $\Omega_1$; then it is not hard
to see that at the critical point the normal must point upwards.
Similarly, if a closed curve surrounding $\Omega_1$ is destroyed
then the normal at the critical point is downward pointing.  For
closed curves surrounding $\Omega_2$ the opposite is true; e.g.
when a closed curve is created, then at the critical point the
normal points downward. Thus, since the level sets at $h$ and $-h$
are intervals, one sees that the normal points up as much as it
points down. That is, $g$ has as many zeros as poles.

We now drop the restrictions on the poles and zeros of $g$. Beyond
these assumptions the argument above used only that $C_{h,R}$ was
minimal and that the boundary curves $\gamma_i$ ($i=1,2$) meet the level curves of $x_3$ in precisely one point.  It is not hard to check that these last two conditions are
preserved by small rotations around lines in the $x_1$-$x_2$ plane.
We claim that such rotations also ensure that the Gauss map of the
new surface must have simple poles or zeros and these are on
distinct level sets.  To that end we let $C_{h,R}^\epsilon$ be the
rotation of $C_{h,R}$ by $\epsilon$ degrees around a fixed line
$\ell$ in the $x_1$-$x_2$ plane and through the origin.

The strict spiraling of $\gamma_1,\gamma_2$ implies there exists an
$\epsilon_0>0$, depending on $\alpha$ and $R$ and a constant $K>0$,
depending on $R$, so: for all $0<\epsilon<\epsilon_0$, if $c\in
(-h+K\epsilon, h-K\epsilon)$ then $\set{x_3=c}\cap C_{h,R}^\epsilon$
meets $\partial C_{h,R}^\epsilon$ in two points. Moreover, by a
suitable choice of $\ell$ the critical points will be on distinct
level sets.  Denote by $g_\epsilon$ the stereographic projection of
the Gauss map of $C_{h,R}^\epsilon$. We now use the fact that $g$ is
meromorphic on $\Sigma$ (and thus the zeros and poles of $g$ are
isolated) and that $g_\epsilon$ is obtained from $g$ by a M\"obius
transform.  Indeed, these two facts imply that (after shrinking
$\epsilon_0$) for $\epsilon \in (0, \epsilon_0)$, $g_\epsilon$ has
only simple zeros and poles on $C_{h,R}^\epsilon$ and by our choice
of $\ell$ these are on distinct levels of $x_3$. To see this we note that there are $\alpha(\epsilon), \beta(\epsilon)\in \mathbb{C}$ (and also depending on $\ell$) satisfying $|\alpha(\epsilon)|^2+|\beta(\epsilon)|^2=1$ so,
\begin{equation}
g_\epsilon=\frac{\alpha(\epsilon) g-\bar{\beta}(\epsilon)}{\beta(\epsilon)g+\bar{\alpha}(\epsilon)},
\end{equation}
where we have also $|\alpha(\epsilon)|\neq 0,1$ and $\alpha(\epsilon)\to 1$ as $\epsilon\to 0$.  Thus, for $\epsilon$ sufficiently small all zeros of $g_\epsilon$ are distinct from, but near, zeros of $g$.  This implies that, at the zeros of $g_\epsilon$,  $dg_\epsilon$ does not vanish.

 By further
shrinking $\epsilon_0$ one can ensure that all of the critical
values occur in the range $(-h+K\epsilon, h-K\epsilon)$. Thus, the
level sets in $C_{h,R}^\epsilon$ of $x_3$ for $c\in (-h+K\epsilon,
h-K\epsilon)$ consist of an interval with endpoints in $\partial
C_{h,R}^\epsilon$, one in each $\gamma_i$ for $i=1,2$, and the
union of a finite number of closed curves. Our original argument
then immediately implies that $g_\epsilon$ has as many zeros as
poles. Notice this is equivalent to the vanishing of the winding number of the map $g_\epsilon$ restricted to
$\partial C_{h,R}^\epsilon$ (which is topologically
$\mathbb{S}^1$) as a map into $\mathbb{C}\backslash \set{0}$.
For $\epsilon$ sufficiently small, $g_\epsilon$ never
has a zero or pole on $\partial C_{h,R}^\epsilon$ and as long as
this is true, the winding number is independent of $\epsilon$.
Thus, $g$ has, counting multiplicity,  the same number of poles and
zeros.
\end{proof}

\begin{cor}
A holomorphic function $f:\Gamma\to \mathbb{C}$ exists so $e^f=g$
on $\Gamma$.
\end{cor}

\subsection{The conformal structure of the end}\label{zPropPrfSec}
The strict spiraling in $\mathcal{R}_S$ is used in \cite{BB1} to
show that the logarithm of $g$, i.e. $f=f_1+if_2$, is, away from a
neighborhood of $\mathcal{R}_A$, a proper conformal diffeomorphism
onto the union of two disjoint closed half-spaces. Since every
level set of $x_3$ has an end in each of these sets, the
properness of $z$ was then a consequence of Schwarz reflection and
the Liouville theorem.  The
 proof only used properties of the end of the surface and so holds also in $\mathcal{E}(1,+)$:
\begin{prop}
\label{hpoly} There exists a $\gamma_0>0$ so:  $f$ is a proper
conformal diffeomorphism from $\Omega_\pm$ onto $\set{z:\pm\re z\geq
2\gamma_0}\subset \mathbb{C}$, where \begin{equation}\label{omegadef}
\Omega_\pm=\set{x\in \Gamma: \pm f_1(x)\geq 2\gamma_0}\subset
\Gamma.\end{equation}
\end{prop}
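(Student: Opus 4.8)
The plan is to reproduce the argument of Section 5 of \cite{BB1}, which proves exactly this statement in the disk case; what makes it go through here is that the argument uses only the behaviour of $\Sigma$ near its end, all of which we have now recovered for $\Sigma\in\mathcal{E}(1,+)$. Concretely, the inputs are: the decomposition of Theorem \ref{FirstMainThm}, so that $\Gamma=\mathcal{R}_A\cup\mathcal{R}_S$ with $\mathcal{R}_S=\Sigma^1\cup\Sigma^2$ a pair of \emph{oppositely oriented} strictly spiraling $\epsilon$-sheets; the fact (Proposition \ref{SecondMainThm}) that $\nabla_\Sigma x_3\neq 0$ on $\Gamma$, so that $g$ has no zero or pole there and hence $f=\log g$ is a well-defined holomorphic function on $\Gamma$ (and $z=x_3+ix_3^*$ is a holomorphic coordinate); the quantitative spiraling estimate of Proposition \ref{uThetaLowBndPrp}; the flatness bound \eqref{MainInEq21} together with the gradient decay \eqref{GradDecay} on the $\epsilon$-sheets; and the uniform curvature bounds near blow-up pairs from Proposition \ref{genuscbprop}.

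First I would locate $\Omega_\pm$. On an $\epsilon$-sheet $\Sigma^j=\Gamma_{u^j}$ the Gauss map is nearly vertical, so the Weierstrass representation gives $|g|\asymp|\nabla u^j|^{-1}$ or $|g|\asymp|\nabla u^j|$ according to the orientation of $\Sigma^j$; since $\Sigma^1,\Sigma^2$ are oppositely oriented, $f_1=\log|g|\to+\infty$ along one of the two sheets and $\to-\infty$ along the other as one moves out. On $\mathcal{R}_A$, which is $\bigcup_i B_{R_1 s_i}(y_i)\cap\Sigma$ together with finitely many bounded pieces, rescaling the $i$-th ball by $s_i$ and using the uniform curvature bound of Proposition \ref{genuscbprop} together with the absence of zeros and poles of $g$ on $\Gamma$ bounds $|f_1|$ there by a constant independent of $i$; $|f_1|$ is likewise bounded on the compact set $\partial\mathcal{R}_G$. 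Hence for $\gamma_0$ large enough $\Omega_+$ lies in the sheet along which $f_1\to+\infty$, $\Omega_-$ in the other, and each lies in the outer, very flat, part $\{\rho\geq\rho_0(\gamma_0)\}$ of its sheet. By symmetry it suffices to treat $\Omega_+$.

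On that region I would write $f=\log g$ in the sheet's $(\rho,\theta)$-parametrization, which is approximately conformal by \eqref{MainInEq21}, and expand $g$ in $\nabla u=\nabla u^j$: one finds $f_1=-\log|\nabla u|+O(1)$ and $f_2=\arg g=\theta+O(1)$. The gradient decay \eqref{GradDecay} makes $f_1$ a proper function with $\partial_\rho f_1>0$ on the outer part, so $\Omega_+$ is the region $\{\rho\geq\rho_*(\theta)\}$ of the sheet for a continuous $\rho_*$, and hence connected; the spiraling estimate of Proposition \ref{uThetaLowBndPrp} together with the Hessian control in \eqref{MainInEq21} makes $f_2$ strictly increasing in $\theta$, increasing by about $2\pi$ whenever $\theta$ increases by $2\pi$. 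It follows that $df=dg/g\neq 0$ on $\Omega_+$, that $f$ is injective there, that $\{f_1=2\gamma_0\}$ is carried homeomorphically onto the line $\{\re z=2\gamma_0\}$, and that every level set $\{f_1=c\}\cap\Omega_+$ with $c\geq 2\gamma_0$ is carried bijectively onto $\{\re z=c\}$; thus $f$ is a conformal diffeomorphism of $\Omega_+$ onto $\{\re z\geq 2\gamma_0\}$. It is proper because the preimage of $\{2\gamma_0\leq\re z\leq M,\ |\im z|\leq M\}$ lies in $\{\rho_*(\theta)\leq\rho\leq\rho_1(M),\ |\theta|\leq\theta_1(M)\}$, a bounded subset of $\mathbb{R}^3$ and so, by properness of $\Sigma$, a compact subset of $\Gamma$.

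The step I expect to be the main obstacle is the control of $|f_1|$ on $\mathcal{R}_A$ in the second paragraph, since it is this that lets one enlarge $\gamma_0$ until $\Omega_\pm$ sits inside a single spiraling sheet. In \cite{BB1} this was an easy consequence of the one-sided curvature estimate for disks; here, because $\mathcal{R}_A$ threads past the genus, one must instead lean on the finite-genus structural results, namely Lemma \ref{BUPinConeLem}, Corollary \ref{ConditiononBUPsTHM} and Proposition \ref{genuscbprop}, to see that blow-up pairs far from the genus behave uniformly. Granting this, the complex-analytic part of the argument (the third paragraph) is identical to that of \cite{BB1}, since it concerns only the end of $\Sigma$.
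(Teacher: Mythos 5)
Your overall strategy is the same as the paper's: bound $|f_1|$ on $\mathcal{R}_A\cup\partial\mathcal{R}_G$ so that for $\gamma_0$ large the sets $\Omega_\pm$ (equivalently all level sets $f_1^{-1}(\gamma)$ with $|\gamma|\geq 2\gamma_0$) are confined to the strictly spiraling region $\mathcal{R}_S$, and then run the argument of Proposition 5.1 of \cite{BB1}, which needs only this confinement. The gap is in your route to that bound. On $\mathcal{R}_A$ you claim that rescaling by $s_i$, the curvature bound of Proposition \ref{genuscbprop}, and the absence of zeros and poles of $g$ on $\Gamma$ bound $|f_1|=|\log|g||$ uniformly; they do not. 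A minimal piece with small curvature and nowhere-vertical Gauss map can have $|g|$ arbitrarily large or small -- the $\epsilon$-sheets themselves are examples -- so one needs a normalization inside each rescaled ball, e.g.\ a point where the tangent plane is vertical ($|g|=1$), which is exactly the normalization used in the Harnack argument in the proof of Theorem \ref{FirstMainThm}. In fact you need not redo any of this: part (3) of Theorem \ref{FirstMainThm} already gives $|\nabla_\Sigma x_3|\geq\epsilon_0$ on $\mathcal{R}_A$ (and a positive lower bound on the compact set $\partial\mathcal{R}_G$), and the pointwise identity \eqref{gradx3h}, $|\nabla_\Sigma x_3|=2|g|/(1+|g|^2)\leq 2e^{-|f_1|}$, converts this directly into $|f_1|\leq\gamma_0$ there. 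This one-line conversion is precisely how the paper proceeds; your detour through Proposition \ref{genuscbprop} re-derives, incompletely, a bound that is already packaged in the decomposition theorem you cite.

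Two secondary cautions. First, the absence of zeros and poles of $g$ on the annulus $\Gamma$ does not by itself give a single-valued $f=\log g$; one also needs the winding number of $g$ to vanish, which is the content of Proposition \ref{exf} and its corollary -- in the present proposition $f$ should simply be taken as given by that corollary. Second, the pointwise claims in your third paragraph ($\partial_\rho f_1>0$ from \eqref{GradDecay}, and $f_2=\theta+O(1)$ strictly increasing in $\theta$) are stronger than what the cited estimates yield: \eqref{GradDecay} is an upper bound, not monotonicity of $|\nabla u|$ in $\rho$, and Proposition \ref{uThetaLowBndPrp} controls $u_\theta$, not $\arg g$ directly. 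Neither point is fatal, since -- exactly as in the paper -- you ultimately defer the complex-analytic core to Proposition 5.1 of \cite{BB1}, whose proof runs through harmonicity of $f_1$ and the structure of its level sets in the spiraling region rather than through these monotonicity statements; but they should not be presented as consequences of the estimates you cite.
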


Proposition 5.1 in \cite{BB1} asserts and proves the identical
statement for minimal disks. The proof relies on showing there
exists $\gamma_0$ such that for every regular value $\gamma \geq
2\gamma_0$, $f_1^{-1}(\gamma)$ consists of exactly one curve on
$\Sigma$, which lies on every sheet of one of the components of
$\mathcal{R}_S$ (note in \cite{BB1} $\log g$ is denoted by $h$).  We
rely on the fact that $|\nabla_\Sigma x_3|$
is a function of $|f_1|$.
Recall, $\nabla_\Sigma x_3$ is the projection of $\mathbf{e}_3=\nabla_{\Real^3} x_3$
onto $T\Sigma$,  and so $|\nabla_\Sigma x_3|$ can be expressed
terms of the $x_3$-coordinate of the unit normal to $T\Sigma$.  Thus, by computing the inverse stereographic projection, one obtains:
\begin{equation}
\label{gradx3h} |\nabla_\Sigma x_3| =2 \frac{ |g|}{1+|g|^2}\leq
2e^{-|f_1|}.
\end{equation} By Theorem \ref{FirstMainThm}, as $|\nabla_\Sigma x_3| \geq \epsilon_0 >0$ on $\mathcal{R}_A \cup \partial \mathcal{R}_G$, there exists
$\gamma_0>0$ so on $\mathcal{R}_A \cup \partial \mathcal{R}_G$,
$|f_1(z)|\leq \gamma_0$. The proof in \cite{BB1} only requires that
$f_1^{-1}(\gamma)$ lies in $\mathcal{R}_S$; thus, since
$f_1^{-1}(2\gamma_0) \cap
\partial \Gamma = \emptyset$, using $f_1^{-1}( \gamma)\cap \Gamma$, the proof carries over without change. The interested reader should consult Proposition 5.1 in
\cite{BB1} for the details.

\subsection{The proofs of Theorem \ref{ThirdMainThm} and Corollary
\ref{ThirdMainThmCor}}\label{pfsec}

In Proposition 5.2 of \cite{BB1}, we show that for $\Sigma \in
\mathcal{E}(1,0)$, $f \circ z^{-1}: \mathbb{C} \to \mathbb{C}$ is
linear. The result follows from standard complex analysis,
exploiting both Schwarz reflection and Liouville's Theorem. For
$\Sigma\in \mathcal{E}(1,+)$, there are a few necessary, but simple,
modifications.
\begin{proof}(Theorem \ref{ThirdMainThm}) We first show that $x_3^*\to \pm \infty$
along each level set of $x_3$; that is $z:\Gamma \to \mathbb{C}$ is
a proper holomorphic coordinate. This follows easily once we
establish that each level set of $x_3$ has one end in $\Omega_+$ and
the other in $\Omega_-$, where these sets are as defined in
\eqref{omegadef}. This follows from the radial gradient decay on
level sets of $x_3$ forced by the one-sided curvature estimate.
Indeed, Corollary \ref{OscCor} and the structural decomposition of
$\Sigma$ imply that for any $\epsilon>0$ small, there is a point
$y_\epsilon \in \Sigma$ and a $\delta_\epsilon>0$ so that within
suitable subsets of $\mathbf{C}_{\delta_\epsilon} (y_\epsilon)$,
$\Sigma$ must be a graph with gradient bounded by $\epsilon$. Recall
\eqref{GradDecay} says that, for any $\delta_\epsilon$-sheet, there
is sub-linear gradient decay on the sheet and so it must eventually
lie within $\mathbf{C}_{\delta_\epsilon} (y_\epsilon)$. Thus, by
Corollary \ref{OscCor} for some large $R_\epsilon>0$ every point of
$\Sigma \cap \mathbf{C}_{\delta_\epsilon} (y_\epsilon)\backslash
B_{R_\epsilon}(y_\epsilon)$ lies on some multi-valued graph that has
gradient bounded by $\epsilon$. Notice any level set of $x_3$ has
its ends in this set and so $|\nabla_\Sigma x_3|\leq C \epsilon$ in
a neighborhood of the ends.

Thus, $x_3(\partial \Omega_+)=(-\infty,\infty)$ and so $z(\partial \Omega_+)$ splits
$\mathbb{C}$ into two components with only one, $V$, meeting
$z(\Omega_+)=U$. If $U$ is a proper subset of $V$ then, by conformally straightening the boundary of $V$ and precomposing with
$f^{-1}|\Omega_+$, we can apply Schwarz reflection to get a map
from $\mathbb{C}$ into a proper subset of $\mathbb{C}$. The
Liouville Theorem then implies that $z$ is constant. This gives a
contradiction and so $U=V$, i.e. $x_3^*\to \pm \infty$ along each level set of $x_3$.  Thus, $z(\Gamma)$ contains $\mathbb{C}$ with a
closed disk removed; in particular, $\Gamma$ is conformally a
punctured disk.  Since $f_1^{-1}(\gamma_0) \cap \Gamma$ is a single smooth curve, $f$
has a simple pole at the puncture. Similarly, by Proposition
\ref{SecondMainThm}, $z$ has a simple pole at the puncture.  In
$\Gamma$, the height differential $dh=dz$ and $\frac{dg}{g}=df$, proving the theorem.
\end{proof}
Embeddedness and the Weierstrass representation,
\eqref{WeierstrassRep}, then give Corollary
\ref{ThirdMainThmCor}:
\begin{proof}(Corollary \ref{ThirdMainThmCor})
Theorem \ref{ThirdMainThm} gives that, in $\Gamma$, $f(p)=\alpha
z(p)+\beta +F(p)$ where $\alpha,\beta\in \mathbb{C}$ and
$F:\Gamma\to \mathbb{C}$ is holomorphic and has holomorphic
extension to the puncture (and has a zero there).  By translating
$\Sigma$ parallel to the $x_3$-axis and re-basing $x_3^*$ we may
assume $\beta=0$. By Proposition \ref{SecondMainThm},
$\set{x_3=0}\cap \Gamma \subset \mathcal{R}_S$ can be written as the
union of two smooth proper curves, $\sigma_\pm$, each with one end
in $\partial \Gamma$, and parametrized so $x_3^*(\sigma_\pm(t))=t$
for $\pm t>T$.

Note that, since $\sigma_\pm'(t)$ is perpendicular to both $\mathbf{e}_3$ and to the normal $\mathbf{n}$ to $\Sigma$ at $\sigma_\pm(t)$, the projection of $\mathbf{n}$ onto
the $\{x_3=0\}$ plane is also perpendicular to $\sigma_\pm'(t)$.
This projection is, up to the correct identification of $\set{x_3=0}$ with $\mathbb{C}$, parallel to $g(\sigma_\pm(t))$.  Since
$\arg(g(\sigma_\pm(t)))= (\re \alpha) t + \im F(\sigma^\pm(t))$, we
see that $\arg(\sigma_\pm'(t)) = \pm \pi/2 + (\re \alpha) t + o(1/t)$.
Consider, for a moment, the curve $\sigma_+(t)$. If $\re \alpha \neq 0$, $\arg(\sigma_+'(t)) \to \infty$ as $t \to \infty$. Thus,
$\sigma_+$ hits the $x_1$-axis infinitely many times.  As $\Sigma$ is properly embedded, this set of intersections tends to $\infty$.  Note that the same can be said for $\sigma_-$, but the
choice of parametrization means it spirals in the opposite
direction.  Thus, the two curves must intersect which contradicts
embeddedness. Therefore, $\re \alpha=0$.
\end{proof}
\appendix
\section{Topological structure of $\Sigma$}
\label{TopStrApp} An elementary but crucial consequence of the
maximum principle is that each component of the intersection of a
minimal disk with a closed ball is a disk.  Similarly, each
component of the intersection of a genus $k$ surface with a ball
has genus at most $k$ (see Appendix C of \cite{CM4} and Section I
of \cite{CM3}). We note that for $\Sigma$ with one end and finite
genus we obtain a bit more:
\begin{prop} \label{GlobTopProp}
Suppose $\Sigma\in \mathcal{E}(1)$ and $\overline{\Sigma} \subset
\Sigma\cap B_1$ is smooth and connected, with the same genus as $\Sigma$. Then,
$\Sigma\backslash \overline{\Sigma}$ is an annulus. Moreover, for
any convex set $C$ with non-empty interior, if $C\cap
B_1=\emptyset,$ then each component of $C\cap \Sigma$ is a disk.
Alternatively, if $B_1\subset C$ then all components of
$C\cap\Sigma$ not containing $\overline{\Sigma}$ are disks.
\end{prop}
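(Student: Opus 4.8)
The plan is to exploit two facts repeatedly: that $\Sigma$ is properly embedded (by \cite{CY}) with exactly one end, and the classical ball–genus estimate that a component of the intersection of a genus $k$ surface with a ball has genus at most $k$ (Appendix C of \cite{CM4}). First I would show $\Sigma\backslash\overline{\Sigma}$ is an annulus. Since $\overline{\Sigma}$ is a compact connected surface of genus $k$ with one boundary circle, and $\Sigma$ is homeomorphic to a compact genus $k$ surface with one puncture, removing $\overline{\Sigma}$ removes all the genus; what remains is a noncompact surface of genus $0$ with one boundary circle and one end, hence an annulus. The only subtlety is checking that $\partial\overline{\Sigma}$ is a single circle separating the genus from the end — this follows because $\overline{\Sigma}$ is taken connected with the full genus of $\Sigma$, so $\Sigma\backslash\overline{\Sigma}$ is connected (it contains the one end) and has connected boundary.

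For the second statement, let $C$ be convex with nonempty interior and $C\cap B_1=\emptyset$. Since $\overline{\Sigma}\subset B_1$, we have $C\cap\overline{\Sigma}=\emptyset$, so any component $\Omega$ of $C\cap\Sigma$ lies in the annular region $\Sigma\backslash\overline{\Sigma}$, which has genus $0$; hence $\Omega$ has genus $0$. It remains to rule out $\Omega$ having more than one boundary component (which would make it a planar domain that is not a disk). Here I would use the convexity of $C$ together with the maximum principle / convex hull property of minimal surfaces: if $\Omega$ were not a disk, it would contain a noncontractible (in $\Omega$) closed curve $\gamma$; since $\Sigma\backslash\overline{\Sigma}$ is an annulus, $\gamma$ either bounds a disk in the annulus — forcing a compact component of $\Sigma\backslash\gamma$ whose interior must meet $\partial C$ or $\overline{\Sigma}$, contradiction with the convex hull property applied to that compact minimal piece with boundary on $\partial C$ — or $\gamma$ is isotopic to the core of the annulus. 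In the latter case $\gamma$ separates $\overline{\Sigma}$ from the end in $\Sigma$, so one side of $\gamma$ in $C\cap\Sigma$ is compact with boundary in $\partial C$, and again the convex hull property (a compact minimal surface lies in the convex hull of its boundary) forces this piece into $\partial C\subset$ boundary of the convex set, a contradiction unless that piece is empty. Thus $\Omega$ is a disk.

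For the alternative with $B_1\subset C$: a component $\Omega$ of $C\cap\Sigma$ not containing $\overline{\Sigma}$ again avoids $\overline{\Sigma}$, hence lies in the annulus $\Sigma\backslash\overline{\Sigma}$ and has genus $0$; the same curve-surgery-plus-convex-hull argument shows it cannot have a noncontractible loop, so it is a disk. The main obstacle I anticipate is the topological bookkeeping in the middle step — carefully arguing that a nonsimply-connected component must contain an essential loop in the annulus and then extracting from it a compact minimal subsurface with boundary entirely on $\partial C$, so that the strict convexity of $C$ (or the maximum principle comparing with planes supporting $C$) yields the contradiction. Everything else is a direct application of properness, the ball–genus estimate, and the standard fact that convex sets meet minimal disks in disks.
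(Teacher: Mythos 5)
Your proposal is essentially correct, and for the middle claim it takes a somewhat different route from the paper. For the annulus statement the paper does what you sketch but pins down the bookkeeping with a one-line Euler characteristic computation: writing $\chi(\Sigma)=\chi(\Sigma')+\chi(\overline{\Sigma})$ for $\Sigma'=\Sigma\backslash\overline{\Sigma}$ forces $g(\Sigma')=0$ and at the same time that $\partial\overline{\Sigma}$ is a single circle, rather than asserting connectedness of $\Sigma'$ and of its boundary outright (both you and the paper lean on the normalization from the preliminaries, where $\overline{\Sigma}$ is a component of $\Sigma\cap B_R$ with connected boundary). For the case $C\cap B_1=\emptyset$ the paper argues differently: it picks a plane $P$ separating $B_1$ from $C$, uses the convex hull property to see that $P\cap\Sigma$ consists only of unbounded proper curves, concludes that every component of $\Sigma\backslash P$ on the $C$ side is a disk, and then invokes the standard fact that minimal disks meet convex sets in disks. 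Your argument instead works intrinsically: an essential simple closed curve $\gamma$ in a non--simply-connected component of $C\cap\Sigma$ is either null-homotopic in the annulus $\Sigma\backslash\overline{\Sigma}$ or core-parallel, and in either case the convex hull property applied to a compact piece of $\Sigma$ cut off by $\gamma$ yields a contradiction; this buys you one uniform argument covering both the disjoint case and the case $B_1\subset C$, where it is in fact essentially the paper's own proof with $\gamma$ an interior essential loop instead of a boundary curve of the component. One detail to repair in your core-parallel case: the convex hull property does not force anything into $\partial C$; it forces the compact component of $\Sigma\backslash\gamma$ (which contains $\overline{\Sigma}$, since $\gamma$ is essential in the annulus) into the convex hull of $\gamma$, hence into $C$, and the contradiction is that this piece contains $\overline{\Sigma}\subset B_1$ while $C\cap B_1=\emptyset$ (respectively, when $B_1\subset C$, that it would place $\overline{\Sigma}$ and $\gamma$ in the same component of $C\cap\Sigma$). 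Likewise, in the null-homotopic case the clean conclusion is that the disk in $\Sigma$ bounded by $\gamma$ lies in $C$, hence in the same component, contradicting the essentiality of $\gamma$ there.
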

\begin{proof}
That $\Sigma'=\Sigma\backslash \overline{\Sigma}$ is an annulus is a purely
topological consequence of $\Sigma$ having one end. Indeed, as the Euler characteristic satisfies
$\chi(\Sigma)=\chi(\Sigma')+\chi(\overline{\Sigma})$ and $\Sigma$ has one end, one computes that
$2g(\Sigma')+e(\Sigma')+e(\overline{\Sigma})=3$, where $g(X)$
and $e(X)$ respectively represent the genus and number of punctures of
$X$. On the other hand, as $\Sigma$ has one end,  $e(\Sigma')=1+e(\overline{\Sigma})$ proving the claim.

If $C$ and $B_1$ are disjoint then, as they are convex, there exists
a plane $P$ so that $P$ meets $\Sigma$ transversely and so that $P$
separates $B_1$ and $C$. Since $\Sigma \backslash \overline{\Sigma}$
is an annulus and $P\cap \overline{\Sigma}=\emptyset$, the convex
hull property implies that $P\cap \Sigma$ consists only of unbounded
smooth proper curves. Thus, exactly one of the components of
$\Sigma\backslash P$ is not a disk. As $C$ is disjoint
from this component we have the desired result.

If $C$ is convex and contains $B_1$, denote by $\Gamma$ the component of
$C\cap \Sigma$ containing $\overline{\Sigma}$.  Suppose $\Gamma_1$ is a different component of $\Sigma \cap C$.  Let $\gamma$ be a component of $\partial \Gamma_1$.  As $\Sigma\backslash \overline{\Sigma}$ is an annulus, we have that $\gamma$ is separating, in particular, one component, $\Gamma_2$ of $\Sigma\backslash \gamma$ is pre-compact. Clearly, $\Gamma_2$ either contains $\overline{\Sigma}$ or is a disk.  By the convex hull property, one has that $\Gamma_2\subset C$ and so $\Gamma_2$ is a component of $\Sigma \cap C$. Thus, by the strong maximum principle $\Gamma_1=\Gamma_2$ and so $\Gamma_1$ is a disk.
\end{proof}

\section{Geometry near a blow-up pair} \label{BUPGeomSec}
The existence of a blow-up pair $(y,s)$ in a minimal surface
$\Sigma$, by definition, implies uniform bounds on the
geometry in the ball $B_s(y)$.  Colding and
Minicozzi's work shows further that there are uniform bounds on
the geometry in any ball on the scale of $s$.  This is most
easily proved using their lamination results. Indeed, we have the
following uniform bound on the curvature, which is an extension of
Lemma 2.26 of \cite{CY} to surfaces of finite genus:
\begin{prop}\label{genuscbprop}
Given  $K_1,g$ we get a constant $K_2$ such that if
\begin{enumerate}
\item \label{genuscbpropa}$\Sigma \subset \Real^3$ is an embedded
minimal surface with genus$(\Sigma)=g$ \item $\Sigma \subset
B_{K_2s}(y)$ and $\partial \Sigma \subset
\partial B_{K_2s}(y)$
\item  \label{genuscbpropc}$(y,s)$ is a blow-up pair,
\end{enumerate}
then we get the curvature bound
\begin{equation}\label{genuscb}
\sup_{B_{K_1s}(y)\cap\Sigma}|A|^2 \leq K_2s^{-2}.
\end{equation}
\end{prop}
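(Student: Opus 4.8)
The plan is to argue by contradiction using a compactness/lamination argument, exactly mimicking the proof of Lemma 2.26 in \cite{CY} but carrying along the bounded genus. Suppose the statement fails for some fixed $K_1$ and genus $g$. Then there is a sequence of embedded minimal surfaces $\Sigma_j$ with $\text{genus}(\Sigma_j)=g$, contained in balls $B_{j s_j}(y_j)$ with boundary on $\partial B_{j s_j}(y_j)$, such that $(y_j, s_j)$ is a blow-up pair for each $j$, yet $\sup_{B_{K_1 s_j}(y_j)\cap\Sigma_j}|A|^2 > j\, s_j^{-2}$. After translating so that $y_j=0$ and rescaling by $1/s_j$, we may assume $s_j=1$, so that $(0,1)$ is a blow-up pair for each rescaled surface (in particular $|A|^2(0)=C^2$ and $\sup_{B_1\cap\Sigma_j}|A|^2\le 4C^2$), the surfaces fill out larger and larger balls $B_j$, and $\sup_{B_{K_1}\cap\Sigma_j}|A|^2\to\infty$.

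The key input is Colding–Minicozzi's compactness theorem for embedded minimal surfaces of fixed finite genus with boundaries going to infinity — this is Theorem 0.9 of \cite{CM5}, which is assumed available in the excerpt. Applying it to the sequence $\Sigma_j$, a subsequence converges (away from a closed singular set $\mathcal{S}$) to a lamination $\mathcal{L}$ of $\Real^3$ by flat parallel planes, with the convergence smooth off $\mathcal{S}$, and $\mathcal{S}$ contained in a union of lines orthogonal to the leaves (a "singular chord-arc" type description); moreover, because $(0,1)$ is a blow-up pair, $0$ persists as a point of curvature concentration, so $0\in\mathcal{S}$ and the relevant singular line $\ell$ passes through $0$. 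The bounded-genus hypothesis is precisely what lets us invoke this form of the theorem rather than the disk version; it guarantees that only finitely many singular lines appear and that away from them the convergence is smooth with uniformly bounded curvature.

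Now I would localize: choose a point $q\in B_{K_1}$ realizing (in the limit) the blow-up of curvature. If $q\notin\ell$ (the singular line through $0$), then near $q$ the convergence $\Sigma_j\to\mathcal{L}$ is smooth, so $|A|_{\Sigma_j}$ is uniformly bounded near $q$ — contradicting $\sup_{B_{K_1}\cap\Sigma_j}|A|^2\to\infty$ unless the blow-up occurs along $\ell$ itself. So the curvature must be blowing up at points approaching $\ell\cap B_{K_1}$. The main obstacle — and the crux of the argument — is to rule this out: one must show that the uniform bound $\sup_{B_1\cap\Sigma_j}|A|^2\le 4C^2$ coming from the blow-up pair propagates along $\ell$ out to radius $K_1$. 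This is where the structure of the blow-up near $(0,1)$ enters: the blow-up pair forces $\Sigma_j$ to contain (on a fixed scale, via the multi-valued graph results, e.g. Proposition B.1/B.2 of the excerpt together with Colding–Minicozzi's Theorem) a pair of multi-valued graphs near $0$ spiraling with bounded curvature, and these graphs, being part of the limit lamination picture, extend with bounded curvature along $\ell$; any point of $\Sigma_j\cap B_{K_1}$ near $\ell$ lies on such a bounded-curvature sheet for $j$ large, because $K_1$ is a fixed constant and the sheets extend arbitrarily far as $j\to\infty$. This yields a uniform curvature bound near $\ell\cap B_{K_1}$ as well, completing the contradiction.

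Putting these together: off $\ell$ we get boundedness from smooth lamination convergence, and along $\ell$ we get boundedness from the multi-valued graph structure forced by the blow-up pair; hence $\sup_{B_{K_1}\cap\Sigma_j}|A|^2$ is bounded, contradicting our assumption. Therefore the desired $K_2$ exists. The only genuinely delicate point, as indicated, is the propagation of the curvature bound along the singular line — everything else is a routine packaging of \cite{CM5} and the scaling-invariance of the blow-up pair condition, following \cite{CY} verbatim with "disk" replaced by "genus $g$".
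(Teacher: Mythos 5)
Your overall strategy---argue by contradiction, translate and rescale so the blow-up pair becomes $(0,1)$, and invoke the Colding--Minicozzi compactness theory for sequences of fixed genus from \cite{CM5} in place of the disk version from \cite{CM4}---is exactly the paper's route (the paper simply runs Lemma 2.26 of \cite{CY} with the more general compactness result). However, the way you close the argument contains a genuine error. First, $0$ is \emph{not} a point of curvature concentration after your normalization: the blow-up pair gives $|A|^2(0)=C^2$ and $\sup_{B_1\cap\Sigma_j}|A|^2\leq 4C^2$ uniformly in $j$, and since the singular set $\mathcal{S}$ consists precisely of those points $x$ with $\sup_{B_r(x)\cap\Sigma_j}|A|^2\to\infty$ for every $r>0$, no point of $B_{1/2}$ can lie in $\mathcal{S}$. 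So your assertion that $0\in\mathcal{S}$ and that the singular line passes through $0$ is false, and it is inconsistent with the uniform bound on $B_1$ that you use a few lines later. (You may be conflating this situation with the rescalings $\lambda_i\Sigma$, $\lambda_i\to 0$, of Appendix D, where blow-up pairs do collapse onto $\mathcal{S}$; here the rescaling fixes the scale at $1$.) Second, and more seriously, your stated crux---propagating the curvature bound along the singular line $\ell$ so as to rule out blow-up at points of $B_{K_1}$ near $\ell$---cannot work and is not needed. The singular set is, by definition, exactly where the curvature blows up, so the claim that every point of $\Sigma_j\cap B_{K_1}$ near $\ell$ lies on a bounded-curvature sheet for large $j$ is false, and there is nothing contradictory per se about curvature blowing up along $\mathcal{S}\cap B_{K_1}$.

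The contradiction must instead be extracted at the blow-up point itself, and it is much simpler. Since by hypothesis the curvature blows up at points of $B_{K_1}\cap\Sigma_j$, the compactness theorem applies (centered at a subsequential limit of those points) and yields convergence, smooth and locally graphical away from $\mathcal{S}$, to a lamination by flat parallel planes. Because $\mathcal{S}\cap B_{1/2}=\emptyset$, the convergence is smooth in a fixed neighborhood of $0$, which forces $|A_{\Sigma_j}|^2(0)\to 0$; this contradicts $|A_{\Sigma_j}|^2(0)=C^2>0$, which is fixed by the blow-up pair normalization. With this replacement for your final two paragraphs, the proof is the paper's proof.
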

The proof is nearly identical to that of Lemma 2.26 of \cite{CY}.
That proof is by contradiction, using Colding and Minicozzi's
compactness result for minimal disks, i.e. Theorem 0.1 of
\cite{CM4}. One proves Proposition \ref{genuscbprop} by
the same argument, but uses instead a more general compactness result, i.e.
Theorem 0.6 of \cite{CM5}.
\section{One-sided Curvature in $\Sigma$}\label{oscsec} In several places we make
use of the one-sided curvature estimate of \cite{CM4}.  Recall that
this result gives a curvature estimate for a minimal disk that is
close to and on one side of a plane. As a sequence of rescaled
catenoids shows, it is crucial that the surface be a disk, something
that makes application to surfaces in $\mathcal{E}(1,+)$ somewhat
subtle. Nevertheless, Proposition \ref{GlobTopProp} allows the use
of the one-sided curvature estimate far from the genus. Recall the
statement of the estimate:
\begin{thm}\label{oscthm}(Theorem 0.2 of \cite{CM4})
There exists $\epsilon>0$ so that if $\Sigma \subset B_{2r_0} \cap
\{x_3>0\} \subset \Real^3$ is an embedded minimal disk with
$\partial \Sigma \subset \partial B_{2r_0}$, then for all
components, $\Sigma'$ of $\Sigma \cap B_{r_0}$ which intersect
$B_{\epsilon r_0}$ we have
\begin{equation}
\sup_{\Sigma'} |A_{\Sigma}|^2 \leq r_0^{-2}.
\end{equation}
\end{thm}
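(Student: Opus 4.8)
The plan is to argue by contradiction, using the structure theory for embedded minimal disks from \cite{CM1,CM2,CM3,CM4}. After rescaling so that $r_0=1$, suppose the estimate fails: there are $\epsilon_i\to 0$, embedded minimal disks $\Sigma_i\subset B_2\cap\{x_3>0\}$ with $\partial\Sigma_i\subset\partial B_2$, and components $\Sigma_i'$ of $\Sigma_i\cap B_1$ meeting $B_{\epsilon_i}$ on which $\sup|A|^2\to\infty$. A standard point-selection argument (as in Lemma 5.1 of \cite{CM2}) produces blow-up pairs $(y_i,s_i)$ in $\Sigma_i'$ with $s_i\to 0$; after passing to a subsequence, $y_i\to 0$, a point on the plane $\{x_3=0\}$.

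First I would produce multi-valued graphs near the blow-up pairs. By the existence and extendability results for multi-valued graphs in embedded minimal disks --- Theorem 0.3 of \cite{CM1} and Theorem 0.4 of \cite{CM2} --- together with Theorem I.0.10 of \cite{CM4}, one gets, associated with each $(y_i,s_i)$, two oppositely-oriented $N$-valued $\epsilon$-sheets spiraling together (a ``double spiral staircase''), graphical with small gradient out to a definite scale, and with separation over the initial circle bounded below by a fixed multiple of $s_i$. In particular one of the two sheets is \emph{descending}: each full turn around the axis decreases $x_3$ by a definite fraction of the local scale, and this persists as the sheet is extended outward.

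The contradiction --- and this is the main obstacle --- must be extracted from the half-space hypothesis $\Sigma_i\subset\{x_3>0\}$. A descending multi-valued graph cannot spiral downward forever without meeting $\{x_3=0\}$, so the crux is to show that the descending sheet stays graphical, stays inside $B_2$, and keeps descending at a definite rate long enough to force $x_3\le 0$ somewhere on $\Sigma_i$, a contradiction once $i$ is large. This is precisely where the hypothesis that $\Sigma_i'$ reaches into the tiny ball $B_{\epsilon_i}$ is used: it forces part of $\Sigma_i'$ --- and hence, by connectedness and the extension results, the descending sheet --- to be pinned near the plane to begin with, so only a bounded amount of further descent is needed before $x_3$ becomes nonpositive, and this descent is achieved before the sheet can escape through $\partial B_2$. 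Making this quantitative --- controlling the rate at which the separation decays, using embeddedness (and the nonexistence of a stable minimal surface trapped between consecutive sheets, in the spirit of \cite{HoffmanMeeksHalfSpace}) to prevent the sheet from colliding with other parts of $\Sigma_i$, and balancing the three scales $\epsilon_i$, $s_i$, and $1$ --- is the substance of the argument and is carried out in \cite{CM4}.
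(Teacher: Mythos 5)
The paper does not prove this statement at all: it is Theorem 0.2 of \cite{CM4} (Colding--Minicozzi's one-sided curvature estimate), recalled verbatim in Appendix C purely as an external input, so the only ``proof'' the paper offers is the citation. Your proposal is likewise not a proof --- you explicitly defer ``the substance of the argument'' to \cite{CM4} --- but the sketch you give of that argument is not faithful to it, and its key step would fail. The multi-valued graphs produced near a blow-up pair by \cite{CM1,CM2} are nearly horizontal: they come with a small gradient bound, and the lower bound of the form $C_2 s$ on the separation holds only over the initial circle $\partial D_s$. Nothing in that theory gives a sheet that ``decreases $x_3$ by a definite fraction of the local scale'' on each turn as it is extended outward; the separation may decay, and the scenario one must rule out is precisely the one your mechanism ignores: the sheets can flatten and pile up against the plane $\{x_3=0\}$ with summable total descent, never being forced to cross it. Excluding that behavior is the heart of the proof in \cite{CM4}, and it is done not by the ``descending staircase must puncture the plane'' heuristic but with different tools: a stable minimal disk trapped between the sheets (via Meeks--Yau), curvature estimates for stable surfaces making that disk graphical so it can serve as a barrier, and quantitative control on the growth and decay of the separation of the multi-valued graph.

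A secondary gap is the compactness setup itself. Negating the statement for $\epsilon_i\to 0$ only yields components meeting $B_{\epsilon_i}$ on which $\sup|A|^2>1$ after normalizing $r_0=1$; it does not give $\sup|A|^2\to\infty$, so the point-selection step producing blow-up pairs with $s_i\to 0$ is not justified as written. Moreover, the one-sided estimate sits logically upstream of the lamination/compactness theorem of \cite{CM4} (and of the results in the present paper that rely on it), so an argument for it must be effective and cannot lean on limit-lamination structure without risking circularity. If your aim is to match the paper's treatment, the correct move is simply to cite Theorem 0.2 of \cite{CM4}, as the paper does.
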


A particularly important consequence of
 Theorem \ref{oscthm} is Corollary I.1.9 of \cite{CM4}, which roughly states that
if an embedded minimal disk has a two-valued graph in the
complement of a cone (and away from a ball), then all components
of $\Sigma$ in the complement of a larger cone (and larger ball)
are multi-valued graphs. Essentially, the two-valued graph takes
the place of the plane in Theorem \ref{oscthm}.  This fact was used extensively in \cite{BB1}.  Thus, we give two variants of it that hold for elements of $\mathcal{E}(1,+)$ and indicate how they follow from \cite{CM4}.

\begin{cor}\label{OscCor0}
 There exists a $c>1$ so: Given $\epsilon>0$ there exist $\delta_0>0$ so that if $\mathbf{C}_{c\delta_0}(y)\backslash B_{s}$ does not meet $\overline{\Sigma}$ and contains a weak 2-valued $\delta_0$-sheet centered at $y$ and on scale $s$, then each component of $\Sigma \cap (\mathbf{C}_{\delta_0}(y)\backslash
B_{2s}(y))$ is a multi-valued graphs with gradient bounded by
$\epsilon$.
\end{cor}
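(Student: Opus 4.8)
The plan is to reduce to Corollary I.1.9 of \cite{CM4}, of which this is the finite-genus analogue: for an embedded minimal \emph{disk}, a nearly flat $2$-valued graph contained in a thin cone forces every component lying in a (slightly thinner) cone, away from a ball, to itself be a multi-valued graph with small gradient. The only new feature here is the possible presence of genus, but the hypothesis that $\mathbf{C}_{c\delta_0}(y)\setminus B_s$ avoids $\overline{\Sigma}$ --- hence avoids all of the genus, which lives in $\overline{\Sigma}\subset B_1$ --- is designed precisely to keep the relevant portion of $\Sigma$ simply connected. So the strategy is to re-run the proof of Corollary I.1.9 of \cite{CM4}, which consists of repeated applications of the one-sided curvature estimate (Theorem \ref{oscthm}, with the $2$-valued $\delta_0$-sheet playing the role of the plane), and to check that each such application remains valid in the present setting.

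First I would fix $\delta_0=\delta_0(\epsilon)$ small enough that the disk version, Corollary I.1.9 of \cite{CM4}, delivers the conclusion ``multi-valued graph with gradient $\leq\epsilon$'' on $\mathbf{C}_{\delta_0}(y)\setminus B_{2s}(y)$. Tracing through that argument, the one-sided curvature estimate is invoked on balls whose radius is a fixed small fraction of the distance to $y$, centered at points of the cone $\mathbf{C}_{\delta_0}(y)$ lying outside $B_{2s}(y)$. The universal constant $c>1$ in the statement is then chosen --- by an elementary computation with cones --- so that every one of these balls is contained in $\mathbf{C}_{c\delta_0}(y)\setminus B_s$; that is, $c$ is exactly the ``buffer'' between the thin cone in which the conclusion is claimed and the somewhat wider cone (and slightly larger ball) in which the estimate must actually be applied, and it does not depend on $\epsilon$.

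With $c$ so chosen, the hypothesis guarantees that each of these balls is disjoint from $\overline{\Sigma}$, so by Proposition \ref{GlobTopProp} every component of $\Sigma$ in each such ball is a disk. Hence Theorem \ref{oscthm} applies to these components exactly as in the disk case, and the argument of Colding and Minicozzi goes through without change: it yields a uniform curvature bound on any component of $\Sigma\cap(\mathbf{C}_{\delta_0}(y)\setminus B_{2s}(y))$ and, via standard elliptic estimates and the thinness of the cone $\mathbf{C}_{\delta_0}(y)$, shows that such a component is a multi-valued graph whose gradient is bounded by $\epsilon$, after shrinking $\delta_0$ further if necessary.

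The step I expect to be the main obstacle is precisely the bookkeeping that fixes $c$: one has to be certain that \emph{every} ball on which \cite{CM4} invokes the one-sided curvature estimate, in the course of controlling $\Sigma\cap(\mathbf{C}_{\delta_0}(y)\setminus B_{2s}(y))$, genuinely lies inside $\mathbf{C}_{c\delta_0}(y)\setminus B_s$ --- otherwise the disk hypothesis of Theorem \ref{oscthm} can fail and the whole argument collapses, as the rescaled catenoids show. Once this geometric point is settled, nothing else in the proof of Corollary I.1.9 of \cite{CM4} requires modification.
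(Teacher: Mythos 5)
Your proposal is correct and follows essentially the same route as the paper: the paper's proof likewise reduces to re-running Corollary I.1.9 of \cite{CM4}, noting that its argument only needs each component of $\Sigma\cap(\mathbf{C}_{c\delta_0}(y)\backslash B_s(y))$ to meet convex sets in disks (so Theorem \ref{oscthm} applies), which Proposition \ref{GlobTopProp} together with the hypothesis that this wider cone avoids $\overline{\Sigma}$ guarantees, with $c$ the universal constant built into that proof. Your extra care about the ball/cone bookkeeping for $c$ is exactly the point the paper disposes of by citing $c$ as universal.
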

\begin{proof}
The result follows immediately from the proof of Corollary I.1.9
of \cite{CM4} (as long as one notes that the proof of Corollary
I.1.9 depends only on each component of $\Sigma\cap
(\mathbf{C}_{c\delta_0}(y)\backslash B_{s(y)})$ meeting any convex
set in a disk (and so Theorem \ref{oscthm} applies) for $c$
a universal constant. Proposition \ref{GlobTopProp} and the hypothesis ensure this.
\end{proof}
We will also use the following specialization of the above:
\begin{cor}\label{OscCor} Given
$\epsilon,\delta>0$ there exist $\delta_0 > 0$ and $R>1$ such
that, if there exists a weak 2-valued $\delta_0$-sheet centered at
$y$ on scale $s$  where $y\notin \mathbf{C}_{\delta}\cup B_R$,
then all the components of $\Sigma\cap
(\mathbf{C}_{\delta_0}(y)\backslash B_{2s}(y))$ are multi-valued
graphs with gradient $\leq \epsilon$.
\end{cor}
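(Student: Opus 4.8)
The plan is to deduce Corollary~\ref{OscCor} from Corollary~\ref{OscCor0}. Let $c>1$ be the universal constant appearing there. Given $\epsilon>0$, Corollary~\ref{OscCor0} produces a $\delta_0'=\delta_0'(\epsilon)$; inspection of its proof (which reduces to the proof of Corollary~I.1.9 of \cite{CM4}) shows that its conclusion remains valid with $\delta_0'$ replaced by any $\delta_0\in(0,\delta_0']$, using the hypotheses at that level $\delta_0$: a weak $2$-valued $\delta_0$-sheet (flatter, in a smaller cone) is in particular a weak $2$-valued $\delta_0'$-sheet, and the requirement that $\mathbf{C}_{c\delta_0}(y)\backslash B_s(y)$ meet convex sets in disks is, if anything, easier for smaller $\delta_0$. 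So, given also $\delta>0$, set $\delta_0=\min\{\delta_0',\,\delta/(2c)\}$, which forces $c\delta_0\le\delta/2$.

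With $\delta_0$ fixed, the only hypothesis of Corollary~\ref{OscCor0} not already at hand is that $\mathbf{C}_{c\delta_0}(y)\backslash B_s(y)$ avoid the genus $\overline{\Sigma}$; since $\overline{\Sigma}\subset B_1$ by our normalization, and since the given weak $2$-valued $\delta_0$-sheet lies in $\mathbf{C}_{\delta_0}(y)\backslash B_s(y)\subset\mathbf{C}_{c\delta_0}(y)\backslash B_s(y)$, it suffices to find $R>1$ so that
\[
y\notin\mathbf{C}_\delta(0)\cup B_R\implies\mathbf{C}_{c\delta_0}(y)\cap B_1=\emptyset .
\]
This is the one computation, and it is elementary Euclidean geometry. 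Writing $r_y=|\Pi(y)|$, the condition $y\notin\mathbf{C}_\delta(0)$ reads $|y_3|>\delta r_y$, while $\mathbf{C}_{c\delta_0}(y)$ is a thin slab about the plane $\{x_3=y_3\}$ that widens away from the vertical line through $y$. If some $x\in B_1$ also lay in $\mathbf{C}_{c\delta_0}(y)$ then, since $|\Pi(x)|\le|x|\le1$, we would have $|y_3|-1\le|x_3-y_3|\le c\delta_0(1+r_y)$; combined with $|y_3|>\delta r_y$ and $c\delta_0\le\delta/2$ this first bounds $r_y$ and then $|y_3|$, hence $|y|$, by an explicit constant depending only on $\delta$, say $R_\delta$. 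Taking $R>R_\delta$ gives the implication; note $R$ depends only on $\delta$ while $\delta_0$ depends on $\epsilon$ and $\delta$, as the statement permits. Feeding this back into Corollary~\ref{OscCor0} yields exactly the asserted conclusion, since that conclusion is already phrased in terms of $\mathbf{C}_{\delta_0}(y)$.

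As for difficulty: there is no real obstacle here. Corollary~\ref{OscCor} is a repackaging of Corollary~\ref{OscCor0} tailored to the recurring situation in which one does not control the genus directly but does know that the relevant blow-up point lies outside a fixed wide cone and far from the origin (as provided by Lemma~\ref{BUPinConeLem}). The two things to watch are the ``complement of a cone'' bookkeeping — one must confirm that ``$y$ high relative to its horizontal distance from the origin'' together with ``$|y|$ large'' really does push the outward-widening slab $\mathbf{C}_{c\delta_0}(y)$ off the unit ball containing the genus, which is precisely the displayed estimate, and in particular that one must take $c\delta_0$ genuinely smaller than $\delta$ for this to work — and the order of quantifiers: $c$ universal, then $\delta_0=\delta_0(\epsilon,\delta)$, then $R=R(\delta)$.
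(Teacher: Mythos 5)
Your proposal is correct and follows essentially the same route as the paper: reduce to Corollary \ref{OscCor0} (noting one may shrink $\delta_0$), and verify by elementary Euclidean geometry that, with $c\delta_0$ small relative to $\delta$ and $|y|\geq R$ large depending on $\delta$, the slab $\mathbf{C}_{c\delta_0}(y)$ misses $B_1\supset\overline{\Sigma}$. The paper organizes that last estimate as an inner-product/angle bound giving $|x|^2\geq\gamma|y|^2$ for all $x\in\mathbf{C}_{c\delta_0}(y)$ (then takes $R^2>1/\gamma$), whereas you bound $|y|$ by contradiction when $B_1$ meets the slab; this is only a cosmetic difference.
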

\begin{proof}
The result follows immediately from Corollary \ref{OscCor0} as
long as we can ensure that $\overline{\Sigma}\subset B_1(0)$, is
disjoint from $\mathbf{C}_{c\delta_0}(y) \backslash B_{s}(y)$.
Suppose $x\in \mathbf{C}_{c\delta_0}(y)$ and think of $x$ and $y$
as vectors.  By choosing $\delta_0$ sufficiently small, depending
on $\delta$, we have that $|\langle x-y,y\rangle|
<(1-\gamma)|y||x-y|$ (that is the angle between $x-y$ and $y$ is
bounded away from $0^\circ$); note $1>\gamma>0$ depends only on
$\delta$.  But then $|x|^2=|x-y+y|^2\geq |x-y|^2+2\langle
x-y,y\rangle +|y|^2\geq \gamma|y|^2$.  Hence, picking
$R^2>\frac{1}{\gamma}$ suffices.
\end{proof}


\section{Colding-Minicozzi Lamination Theory} \label{GlobGeomApp} \label{LamResApp} We note that Theorem
\ref{FirstMainThm} is a sharpening, for $\Sigma\in \mathcal{E}(1)$,
of a much more general description of the shapes of minimal surfaces
given by Colding and Minicozzi in \cite{CM5}. More precisely, in
that paper they show, for a large class of embedded minimal surfaces
in $\Real^3$, how the geometric structure of a surface is determined
by its topological properties.  In particular, as $\Sigma$ has
finite topology and one end, their work shows that it roughly looks
like a helicoid. That is, away from a compact set containing the
genus, $\Sigma$ is made up of two infinite-valued graphs that spiral
together and are glued along an axis.  Using this description, they
show compactness results that generalize their lamination theory of
\cite{CM4}.  As in the case for disks, the derivation of this global
description of finite genus surfaces uses local versions of
propositions as in Section \ref{StrSec}. However, Colding and
Minicozzi do not explicitly state these results and so, for the sake
of completeness, we will state a modified form of a compactness
result from \cite{CM5} and use it to give simple proofs of
Propositions \ref{GraphNrConePrp} and \ref{GraphExtPrp}.

While the lamination theory of \cite{CM5} will be the launching
point for proving the two propositions, we need only outline one
small portion of the theory to get our result. In particular, we
need only consider the structure of the limit lamination of
homothetic dilations for $\Sigma \in \mathcal{E}(1)$. In this case,
the lamination has the same structure as for a sequence of embedded
minimal disks, which is modeled on rescalings of the helicoid.

\begin{thm}
Let $\Sigma \in \mathcal{E}(1)$ be non-flat, and let $\lambda_i \to
0$. Set $\Sigma_i = \lambda_i \Sigma$. There exists a subsequence
$\Sigma_j$, a foliation $\mathcal{L}=\{x_3=t\}_{t\in \Real}$ of
$\Real^3$ by parallel planes, and a closed nonempty set
$\mathcal{S}$ in the union of the leaves of $\mathcal{L}$ such that
after a rotation of $\Real^3$:
\begin{enumerate}
\item For each $1>\alpha>0$, $\Sigma_j \backslash \mathcal{S}$
converges in the $C^\alpha$-topology to the foliation
$\mathcal{L}\backslash \mathcal{S}$.
\item $\sup_{B_r(x)\cap \Sigma_j}|A|^2 \to \infty$ as $j \to \infty$
for all $r>0$ and $x \in \mathcal{S}$.  (The curvatures blow up
along $\mathcal{S}$.)
\item \label{twosing}Away from $\mathcal{S}$, each $\Sigma_j$
consists of exactly two multi-valued graphs spiraling together.
\item $\mathcal{S}$ is a single line orthogonal to the
leaves of the foliation.
\end{enumerate}
\end{thm}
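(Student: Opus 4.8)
The plan is to deduce this as a specialization of the Colding--Minicozzi lamination theory (Theorem~0.1 of \cite{CM4}) together with the compactness theorem for finite genus surfaces (Theorem~0.9 of \cite{CM5}, which we assume), applied to the sequence of homothetic blow-downs $\Sigma_j = \lambda_j \Sigma$. The one point that requires the hypothesis $\Sigma \in \mathcal{E}(1)$, rather than merely an arbitrary sequence of embedded minimal surfaces, is the reduction to the locally simply connected situation away from a single point, which comes from Proposition~\ref{GlobTopProp} and the concentration of the genus.

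First, I would observe that the genus and the curvature concentrate at the origin. Dilation by $\lambda_j$ multiplies $|A|^2$ by $\lambda_j^{-2}$, and we have normalized $\Sigma$ so that $\overline{\Sigma} \subset B_1$ and $\sup_{\overline{\Sigma}} |A|^2 \geq 1$. Hence the genus of $\Sigma_j$ lies in $B_{\lambda_j}(0)$, and $\sup_{B_{\lambda_j}(0) \cap \Sigma_j} |A|^2 = \lambda_j^{-2}\sup_{\overline{\Sigma}}|A|^2 \to \infty$; in particular the curvatures of $\Sigma_j$ blow up at $0$, so the singular set $\mathcal{S}$ produced below is nonempty and contains $0$. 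Moreover, for every $\rho > 0$ and all $j$ large the ball $B_\rho(0)$ contains the genus of $\Sigma_j$, so by Proposition~\ref{GlobTopProp} every component of $\Sigma_j \cap C$ is a disk whenever $C$ is convex with nonempty interior and $C \cap B_\rho(0) = \emptyset$. Thus, away from the origin, $\Sigma_j$ is locally a sequence of embedded minimal disks.

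Next, I would exhaust $\Real^3 \setminus \{0\}$ by compact sets, diagonalize to extract a subsequence $\Sigma_j$, and apply the disk lamination theorem of \cite{CM4} in balls disjoint from $0$ (using Theorem~0.9 of \cite{CM5} to control $\Sigma_j$ near $0$, where the genus concentrates). This produces, after a rotation of $\Real^3$, a closed set $\mathcal{S}$ contained in the union of the leaves of a foliation $\mathcal{L} = \{x_3 = t\}_{t \in \Real}$ of $\Real^3$ by parallel planes, such that: $\Sigma_j \setminus \mathcal{S} \to \mathcal{L}\setminus\mathcal{S}$ in the $C^\alpha$ topology for every $\alpha \in (0,1)$; $\sup_{B_r(x) \cap \Sigma_j}|A|^2 \to \infty$ for all $r > 0$ and $x \in \mathcal{S}$; and, away from $\mathcal{S}$, each $\Sigma_j$ consists of exactly two multi-valued graphs spiraling together (this last point coming from \cite{CM1,CM2,CM4}: a point of large curvature produces a multi-valued graph, which by the one-sided curvature estimate and Theorem~I.0.10 of \cite{CM4} forces all nearby components to be multi-valued graphs). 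This gives conclusions (1), (2), and (3), and exhibits each connected component of $\mathcal{S}$ as a Lipschitz curve transverse to the leaves; that each such component is in fact a line orthogonal to the leaves is part of the description given in \cite{CM5}.

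Finally, to see that $\mathcal{S}$ is a single line, let $\ell$ be the component of $\mathcal{S}$ through $0$. Suppose $\mathcal{S}$ had a second component $\ell'$. Near a point of $\ell'$, conclusion (3) says $\Sigma_j$ is a pair of multi-valued graphs spiraling together, so away from $\ell'$ these are graphs with arbitrarily small gradient; by Corollary~I.1.9 of \cite{CM4} (the one-sided curvature estimate with this $2$-valued graph playing the role of a plane) every component of $\Sigma_j$ in the complement of a fixed cone about $\ell'$ is then a multi-valued graph with small gradient, hence has curvature bounded uniformly in $j$ on compact subsets of $\Real^3 \setminus \ell'$. Since $\ell \neq \ell'$, this contradicts the fact that the curvatures blow up along $\ell \subset \mathcal{S}$. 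Hence $\mathcal{S} = \ell$, a single line through $0$ orthogonal to the planes $\{x_3 = t\}$, giving (4). The main obstacle is precisely this last step: one must use the global fact that $\Sigma$ has a single end to rule out two independent helicoidal columns in the blow-down, while taking care not to invoke Theorem~\ref{FirstMainThm} or Propositions~\ref{GraphNrConePrp}--\ref{GraphExtPrp}, whose proofs rely on the present theorem.
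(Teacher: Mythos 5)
The first thing to note is that the paper does not prove this theorem at all: it is presented explicitly as a restatement, specialized to blow-downs of a one-ended finite-genus surface, of Theorem 0.9 of \cite{CM5}, which the paper assumes throughout (see the remark immediately following the statement, ``For the theorem in its entirety, see Theorem 0.9 of \cite{CM5},'' and the discussion in Section 2.2). The intended justification is simply: the sequence $\Sigma_j=\lambda_j\Sigma$ has fixed finite genus, the normalization $\sup_{\overline{\Sigma}}|A|^2\geq 1$ forces the curvature to blow up at $0$, and because $\Sigma$ has exactly one end the ``helicoid-like'' alternative of Colding--Minicozzi's compactness theorem applies, giving the foliation by parallel planes with $\mathcal{S}$ a single line. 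Your proposal is therefore not parallel to anything in the paper, and as a standalone argument it is partly circular: you invoke Theorem 0.9 of \cite{CM5} ``to control $\Sigma_j$ near $0$,'' but the statement you are proving is precisely a specialization of that theorem, so either you take it as given (in which case the CM4-based patching and the final uniqueness argument are unnecessary) or you must supply the near-genus analysis yourself, which is the hard content of \cite{CM5}.

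The substantive gap is in the step ``exhaust $\Real^3\setminus\{0\}$ by compact sets, diagonalize, and apply the disk lamination theorem of \cite{CM4} in balls disjoint from $0$.'' Theorem 0.1 of \cite{CM4} is a global theorem: it requires disks $\Sigma_i\subset B_{R_i}$ with $\partial\Sigma_i\subset\partial B_{R_i}$ and $R_i\to\infty$, and it is exactly this global hypothesis that produces flat leaves, a foliation of all of $\Real^3$, and a singular set that is a line orthogonal to the leaves. A local application in a fixed ball away from the origin yields only a limit lamination whose leaves need not be planes and whose singular behavior can be much worse (Colding and Minicozzi's own local, non-proper examples show the naive local version of the theorem fails). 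Passing from ``locally simply connected away from one point'' to conclusions (1)--(4) is the main content of \cite{CM5} and cannot be obtained by exhaustion and diagonalization alone. Your closing argument that $\mathcal{S}$ cannot contain two lines, via Corollary I.1.9 of \cite{CM4} with a $2$-valued graph near $\ell'$ playing the role of the plane, is the right kind of idea, but even there you would need the graphs around $\ell'$ on a uniform scale at all heights and for all large $j$, and you must keep the regions where the one-sided curvature estimate is applied away from the concentrating genus (Proposition \ref{GlobTopProp} only gives the disk property for convex sets missing $B_{\lambda_j}$); in any case that step only addresses conclusion (4), not the structural statements (1)--(3), which your outline assumes rather than proves.
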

\begin{rem}For the theorem in its entirety, see Theorem 0.9 of
\cite{CM5}.
\end{rem}
We now use the nature of this convergence to deduce gradient bounds
outside a cone. This, together with further application of the
compactness theorem, gives Propositions \ref{GraphNrConePrp} and
\ref{GraphExtPrp}.
\begin{lem} \label{GraphBndCor}
For any $\epsilon>0, \delta>0$ there exists an $R>1$ so every
component of $(\mathbf{C}_{\delta}\backslash B_R)\cap \Sigma$ is a
graph over $\set{x_3=0}$ with gradient less than $\epsilon$.
\end{lem}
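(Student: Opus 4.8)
The plan is to deduce this lemma directly from the Colding–Minicozzi compactness theorem (Theorem 0.9 of \cite{CM5}) stated just above, by a contradiction/blow-down argument. Suppose the statement fails for some fixed $\epsilon,\delta>0$. Then there is a sequence of points $p_i\in \mathbf{C}_\delta\cap\Sigma$ with $|p_i|\to\infty$ such that the component of $(\mathbf{C}_\delta\backslash B_{|p_i|/2})\cap\Sigma$ through $p_i$ is not a graph over $\{x_3=0\}$ with gradient $<\epsilon$ — equivalently, either it fails to be graphical, or somewhere on it $|\nabla_\Sigma x_3|$ is too small, i.e.\ the unit normal has $x_3$-component bounded away from $1$ in absolute value. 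Set $\lambda_i = 1/|p_i|\to 0$ and $\Sigma_i=\lambda_i\Sigma$, so that the rescaled points $q_i=\lambda_i p_i$ lie on the unit sphere and in the cone $\mathbf{C}_\delta$; pass to a subsequence so that $q_i\to q_\infty\in \mathbb{S}^2\cap\mathbf{C}_\delta$.

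First I would apply the compactness theorem to $\Sigma_i$, extracting a subsequence $\Sigma_j$, a foliation $\mathcal{L}=\{x_3=t\}$ (after a rotation of $\Real^3$ — and here one must check this rotation is compatible, i.e.\ is the same rotation already fixed by the global structure; since the genus is fixed in $B_1$ and the $\Sigma_i$ are dilations, the singular line $\mathcal{S}$ passes through the origin, and the rotation taking $\mathcal{S}$ to the $x_3$-axis is exactly the one from the global description), and the singular set $\mathcal{S}$, which by item (4) is the $x_3$-axis. The key point is that $q_\infty\in\mathbf{C}_\delta$ means $q_\infty$ is at positive distance from the $x_3$-axis $\mathcal{S}$, and hence from some small ball around it. By item (1), on the compact region $\overline{\mathbf{C}_\delta}\cap(\overline{B_2}\backslash B_{1/2})$, which stays a definite distance from $\mathcal{S}$, $\Sigma_j\backslash\mathcal{S}$ converges in $C^\alpha$ (indeed in $C^{1,\alpha}$ on the graphical pieces, as the leaves are smooth and the convergence is as graphs) to the flat foliation $\mathcal{L}$. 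Therefore, for $j$ large, the component of $\Sigma_j$ through $q_j$ lying in this region is a graph over $\{x_3=0\}$ with gradient arbitrarily small — in particular $<\epsilon$ — which, rescaling back by $\lambda_j^{-1}$, contradicts the choice of $p_j$.

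The \textbf{main obstacle} I expect is bookkeeping with the cone and the rotation: one must be careful that the component of $(\mathbf{C}_\delta\backslash B_R)\cap\Sigma$ through $p_i$, after rescaling, really does correspond to a component of $\Sigma_j$ lying in a region bounded away from $\mathcal{S}$ on a fixed \emph{annular} scale $\{1/2\le|x|\le 2\}$ — a priori a component in the cone on scale $|p_i|$ could be a long graphical sheet that dips close to the axis at other radii. This is handled by working only on the fixed annulus $\overline{B_2}\backslash B_{1/2}$ after rescaling (i.e.\ on the scale $\sim|p_i|$ back in $\Sigma$), where item (1) gives uniform graphical convergence away from $\mathcal{S}$, so the relevant component is unambiguously graphical with small gradient there; to get the statement on the full unbounded region $\mathbf{C}_\delta\backslash B_R$ one then lets $R\to\infty$ and notes that if the conclusion failed for arbitrarily large radii we could run the above argument at each such scale. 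A secondary point is verifying that "graph with small gradient" is equivalent to the quantitative statement we want; this is immediate from $C^{1,\alpha}$ convergence to a horizontal plane. The remaining ingredients — that the singular set is exactly the $x_3$-axis and that the convergence is smooth off it — are precisely items (1) and (4) of the cited theorem, so nothing further is needed.
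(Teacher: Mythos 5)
Your proposal is correct and takes essentially the same route as the paper: a blow-down contradiction argument, rescaling by $1/|p_i|$, applying the Colding--Minicozzi compactness theorem, and using that the limit foliation consists of horizontal planes with singular set the vertical line through the origin, so that the rescaled bad points, lying in $\mathbf{C}_\delta \cap \partial B_1$, stay a definite distance from $\mathcal{S}$ and hence land on pieces converging graphically with small gradient. The only notable difference is bookkeeping: where you informally upgrade the $C^\alpha$ convergence of item (1) to $C^{1,\alpha}$ graphical convergence, the paper localizes on balls $B_{\gamma |p_i|}(p_i)$ with $\gamma=\gamma(\delta)$ and cites Proposition III.0.2 of \cite{CM5} to get precisely this convergence as graphs away from $\mathcal{S}$.
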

\begin{proof}
We proceed by contradiction.  Suppose there exists a sequence
$\{R_i\}$ with $R_i \to \infty$ and points $p_i\in
(\mathbf{C}_{\delta}\backslash B_{R_i})\cap \Sigma$ such that the
component of $B_{\gamma|p_i|} (p_i)\cap \Sigma$ containing $p_i$,
$\Omega_i$, is not a graph over $\set{x_3=0}$ with gradient less
than $\epsilon$. Here $\gamma$ depends on $\delta$ and will be
specified later.  Now, consider the sequence of rescalings
$\frac{1}{|p_i|}\Sigma$, which by possibly passing to a subsequence
converges to $\mathcal{L}$ (away from $\mathcal{S}$). Passing to
another subsequence, $\frac{1}{|p_i|} p_i$ converges to a point
$p_\infty \in \mathbf{C}_{\delta}\cap B_1$.  Let $\tilde{\Omega}_i =
\frac{1}{|p_i|} \Omega_i$.  Proposition III.0.2 of \cite{CM5}
guarantees that if $B_{\gamma} (p_\infty) \cap \mathcal{S}=
\emptyset$ then the $\tilde{\Omega}_i$ converge to
$\tilde{\Omega}_\infty \subset \{x_3=x_3(p_\infty)\}$ as graphs. As
$\mathcal{S}$ is the sole singular set, we may choose $\gamma$
small, depending only on $\delta$, to ensure this. Thus, for large
$j$, $ \tilde{\Omega}_j$ is a graph over $\set{x_3=0}$ with gradient
bounded by $\epsilon$, giving the desired contradiction.
\end{proof}
We now show Propositions \ref{GraphNrConePrp} and \ref{GraphExtPrp}:
\begin{proof} (of Proposition \ref{GraphNrConePrp}). Let $\tilde{\delta} = \epsilon$ and $\tilde{\epsilon} \leq
\tilde{\delta}/(4\pi N)$. Choose $R$ from Lemma \ref{GraphBndCor},
using this $\tilde{\delta}, \tilde{\epsilon}$. Thus, every component
of $\Sigma \cap \mathbf{C}_{\tilde{\delta}} \backslash B_R$ has
gradient bounded by $\tilde{\epsilon}$. Since $|w(\rho, \theta)|
\leq \int_0^{2\pi} |u_\theta| \leq 2\pi \tilde{\epsilon}\rho$, we
see that there are $N$ sheets in $\mathbf{C}_\epsilon$.
\end{proof}
\begin{proof} (of Proposition \ref{GraphExtPrp}) Note that as long as $|y|$ is sufficiently large, Theorem 0.6 of
\cite{CM2} gives an $\Omega<1/2$ (as well as a constant $C_1$) so
that since the component of $B_{\frac{1}{2}|y|}(y) \cap \Sigma$
containing $y$ is a disk, there exists a $N$-valued graph $\Sigma_0$
over the annulus, $A= D_{\Omega|y|}\backslash D_{s/2}(y)\subset P$
with gradient bounded by $\epsilon/2$. Here $P$ is in principle an
arbitrary plane in $\Real^3$.

We claim that Lemma \ref{GraphBndCor} implies a subset, $\Sigma_0'$,
of $\Sigma_0$ is a $N$-valued graph over the annulus $A'=D_{\Omega
|y|/2}\backslash D_{s}(\Pi(y))\subset \set{x_3=0}$ with gradient
bounded by $\epsilon$, which further implies $\Sigma_0'$ can be
extended as desired. To that end we note that for $\delta >
1/(4\Omega)$, if $y\notin \mathbf{C}_{\delta}$ then $A$ (and thus,
by possibly increasing $\delta$, $\Sigma_0$) meets
$\mathbf{C}_{\delta}$. Lemma \ref{GraphBndCor} allows us to choose
an $R_0>0$ so that every component of $\Sigma \cap
(\mathbf{C}_{\delta}\backslash B_{R_0})$ is a multi-valued graph
over $\set{x_3=0}$ with gradient bounded by $\epsilon/4$.  Thus if
we take $R>2R_0$ then there is a point of $\Sigma_0$ in
$\mathbf{C}_{\delta}\backslash B_{R_0}$; therefore, for the gradient
estimates at the point to be consistent, $P$ must be close enough to
$\set{x_3=0}$ so that we may choose $\Sigma_0'\subset\Sigma_0$ so it
is a multi-valued graph over $A'$.  Furthermore, the part of
$\Sigma_0'$ over the outer boundary of $A'$ is necessarily inside of
$\mathbf{C}_{\delta}\backslash B_{R_0}$ and so Lemma
\ref{GraphBndCor} allows us to extend it as desired.
\end{proof}

\bibliographystyle{amsplain}
\bibliography{ConfStrucFinal5}

\end{document}